\newcommand{\newnumbered}[2]{\newtheorem{#1}[theorem]{#2}}
\newcommand{\newunnumbered}[2]{\newtheorem{#1}[theorem]{#2}}
\newcommand{\Title}[2]{\title{#1}\newcommand{\Acknowledgements}{\section*{Acknowledgements} #2}}
\newcommand{\Author}[2][]{\author{#2}}
\newcommand{\Comma}{\and}
\newcommand{\br}{, }
\newcommand{\fs}{. }
\newcommand{\thanksone}[3][]{#1\thanks{#3\email{\tt #2}}}
\newcommand{\thankstwo}[3][]{#1\thanks{#3\email{\tt #2}}}
\newcommand{\email}[1]{#1}
\newcommand{\classno}[2][2000]{}
\newcommand{\printscl}{}
\newcommand{\mktitle}{\maketitle}
\newcommand{\mkabstitle}{}
\newtheorem{theorem}{Theorem}[section] 
\newtheorem{lemma}[theorem]{Lemma}     
\newtheorem{claim}[theorem]{Claim}     
\newtheorem{corollary}[theorem]{Corollary}
\numberwithin{equation}{section}
\newcommand{\DOI}[1]{\href{http://dx.doi.org/#1}{\texttt{doi:#1}}}
\newcommand{\noop}[1]{}
\newcommand\rank{\mathrm{rank}}
\newcommand\opk[1]{\mathop{\mathsf{#1}}\nolimits}
\newcommand\Harm{\opk{Harm}}
\newcommand\Hom{\opk{Hom}}
\newcommand{\dd}{\; \mathrm{d}}
\newcommand\ip[2]{\langle#1,#2\rangle}
\newcommand\Om{\Omega}
\tikzstyle{new}=[circle,  minimum width=4pt,inner sep=0pt, fill=black,draw=black]
\tikzstyle{none}=[circle,fill=white,draw=black]
\tikzstyle{n}=[shape=rectangle,minimum width=1pt,inner sep=0pt, fill=none,draw=none]
\tikzstyle{emph}=[circle,  minimum width=4pt,inner sep=0pt, fill=magenta,draw=magenta]
\tikzset{directed/.style={decoration={
  markings,
  mark=at position .6 with {\arrow{>}}},postaction={decorate}}}
\begin{document}
\Title{On the multiplicities of digraph eigenvalues}{%
Alexander Gavrilyuk is supported by
Basic Science Research Program through the National Research Foundation of Korea (NRF) funded
by the Ministry of Education (grant number NRF-2018R1D1A1B07047427).
Sho Suda is supported by JSPS KAKENHI Grant Number 18K03395.
}

\Author[Alexander L. Gavrilyuk, Sho Suda]{%
\thanksone[Alexander L. Gavrilyuk]{alexander.gavriliouk@gmail.com}{%
Pusan National University\br
2, Busandaehak-ro 63beon-gil\br
Geumjeong-gu, Busan, 46241\br
Republic of Korea\fs
}
\Comma
\thankstwo[Sho Suda]{ssuda@nda.ac.jp}{%
Department of Mathematics\br
National Defense Academy of Japan\br
2-10-20 Hashirimizu, Yokosuka\br
Kanagawa, 239-8686\br
Japan\fs
}
}

\classno{05E30 (primary), 05B15 (secondary)}

\date{\today}

\mktitle

\begin{abstract}
We show various upper bounds for the order of a digraph (or a mixed graph) whose Hermitian adjacency matrix
has an eigenspace of prescribed codimension. In particular, this generalizes the so-called absolute bound
for (simple) graphs first shown by Delsarte, Goethals, and Seidel (1977) and extended by
Bell and Rowlinson (2003).
In doing so, we also adapt the Blokhuis' theory (1983) of harmonic analysis in 
real hyperbolic spaces to that in complex hyperbolic spaces. 
\printscl
\end{abstract}

\mkabstitle

\section{Introduction}
Although it is likely that almost all graphs have simple eigenvalues only \cite{Tao},
many interesting highly structured graphs (such as strongly regular and distance-regular graphs)
have eigenvalues of large multiplicities relative to the number of vertices.
It is then natural to ask whether the multiplicity of an eigenvalue can be bounded above.
In 1977, Delsarte, Goethals, and Seidel obtained the following result regarding this question
as a by-product of the theory of spherical codes and designs.

\begin{theorem}[\cite{DGS}]\label{theo-DGS}
A regular (simple) graph on $n$ vertices, whose $(0,1)$-adjacency matrix
has the smallest eigenvalue $<-1$ of multiplicity $n-d$, satisfies
\[
n\leq \frac{1}{2}d(d+1)-1=\frac{1}{2}(d-1)(d+2).
\]
\end{theorem}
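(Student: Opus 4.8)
The plan is to realize the $\theta$-eigenspace geometrically and reduce the multiplicity bound to a linear-independence statement for symmetric matrices. Write $A$ for the adjacency matrix, let $\theta<-1$ be the smallest eigenvalue (of multiplicity $n-d$), and set $M:=A-\theta I$. Since $\theta$ is the smallest eigenvalue, $M$ is positive semidefinite, and since its kernel is the $\theta$-eigenspace of dimension $n-d$, the matrix $M$ has rank $d$. Hence I would factor $M=U^{\top}U$ with $U$ a $d\times n$ matrix, and denote its columns by $u_1,\dots,u_n\in\mathbb R^d$. Reading off the entries of $M=U^{\top}U$ gives $\ip{u_i}{u_i}=-\theta$ for all $i$ and $\ip{u_i}{u_j}=A_{ij}\in\{0,1\}$ for $i\neq j$; so the $u_i$ are $n$ vectors of common norm in $\mathbb R^d$ whose pairwise inner products take only the values $0$ and $1$.

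Next I would pass to the rank-one symmetric matrices $y_i:=u_iu_i^{\top}\in\Sym_d(\mathbb R)$ and show they are linearly independent. Their Frobenius Gram matrix has entries $\ip{y_i}{y_j}=\tr(u_iu_i^{\top}u_ju_j^{\top})=\ip{u_i}{u_j}^2$, which equals $\theta^2$ on the diagonal and $A_{ij}^2=A_{ij}$ off the diagonal; that is, the Gram matrix is $\theta^2 I+A$. Its eigenvalues are $\theta^2+\lambda$ as $\lambda$ ranges over the spectrum of $A$, and the smallest of them is $\theta^2+\theta=\theta(\theta+1)>0$ precisely because $\theta<-1$. Thus the Gram matrix is positive definite, the $y_i$ are linearly independent in $\Sym_d(\mathbb R)$, and this already yields the crude bound $n\le\binom{d+1}{2}$.

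The improvement by one — and the step I expect to be the crux — is to exhibit a single nonzero symmetric matrix orthogonal to every $y_i$, forcing the $y_i$ into a hyperplane of $\Sym_d(\mathbb R)$. This is where regularity enters: since the graph is $k$-regular, $A\one=k\one$, and as $k>\theta$ the vector $\one$ lies in $\tr$-free fashion in the column space of $M=U^{\top}U$, hence in the row space of $U$; equivalently there is $a\in\mathbb R^d$ with $U^{\top}a=\one$, i.e. $\ip{u_i}{a}=1$ for every $i$. I would then set $S:=aa^{\top}-\alpha I$ with $\alpha:=-1/\theta$ and verify $\ip{y_i}{S}=u_i^{\top}Su_i=\ip{u_i}{a}^2+\alpha\theta=1+\alpha\theta=0$ (as $\alpha\theta=-1$), while $S\neq0$ since $aa^{\top}$ has rank one and $\alpha I$ has rank $d\ge2$. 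Consequently every $y_i$ lies in the $\left(\binom{d+1}{2}-1\right)$-dimensional space $S^{\perp}$, and their linear independence gives $n\le\binom{d+1}{2}-1=\tfrac12 d(d+1)-1$. The two hypotheses are each used sharply and irremovably: positive definiteness of the Gram matrix needs $\theta<-1$ (so that $\theta(\theta+1)>0$), and the construction of the killing matrix $S$ needs regularity (so that $\one$ is an eigenvector and therefore lies in the row space of $U$).
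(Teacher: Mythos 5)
Your proof is correct. Each step checks out: $M=A-\theta I$ is positive semidefinite of rank $d$, the Gram matrix of the rank-one matrices $y_i=u_iu_i^{\top}$ under the Frobenius product is $\theta^2I+A$, whose least eigenvalue $\theta(\theta+1)$ is positive exactly because $\theta<-1$, and the matrix $S=aa^{\top}+\tfrac{1}{\theta}I$ indeed satisfies $u_i^{\top}Su_i=1-1=0$ while being nonzero for $d\ge 2$ (the case $d\le 1$ being vacuous). However, your route differs from the one this paper follows for its generalizations (the paper only cites \cite{DGS} for Theorem \ref{theo-DGS} itself, and the original proof there goes through harmonic analysis on the sphere). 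The paper's method, visible in Theorems \ref{th-case1} and \ref{th-case1-improv}, works with the quadratic functions $F_u({\bf x})=\ip{{\bf s}_u}{{\bf x}}^2$ rather than with the matrices $u_iu_i^{\top}$: linear independence is proved by substituting the points ${\bf s}_v$ and deriving two incompatible eigenvector equations for the coefficient vector, and the improvement by $1$ is obtained by \emph{adjoining} an extra function $F$ built from $\one$ and showing that $F,F_1,\ldots,F_n$ are jointly independent, so that $n+1$ is bounded by the ambient dimension. You instead get linear independence in one stroke from positive definiteness of $\theta^2I+A$, and you get the improvement by exhibiting a \emph{hyperplane} $S^{\perp}$ of $\mathrm{Sym}_d(\mathbb{R})$ containing all the $y_i$ — the dual mechanism. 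Your version is shorter and more self-contained for this specific theorem; the paper's functional formulation is the one that survives the passage to Hermitian adjacency matrices, where the relevant function spaces (spanned by $x_kx_\ell$, $x_k\overline{x_\ell}$, $x_k$) no longer coincide with a single space of symmetric forms and the star-complement machinery is needed to control the inner products.
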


This bound is sharp.
Namely, recall that a {\bf strongly regular} graph $\Gamma$
can be defined as a connected $k$-regular graph with precisely three distinct eigenvalues.
Then $k$ is the eigenvalue of multiplicity 1, and if $m_1\geq m_2$
denote the multiplicities of the two other eigenvalues,
then the order $n$ of $\Gamma$ satisfies
\begin{equation}\label{eq-abs-bound-SRG}
n\leq \frac{1}{2}m_2(m_2+3),
\end{equation}
which is called the {\bf absolute bound} for strongly regular graphs \cite{Neu,Sei}.
It is easily seen that the absolute bound follows
from Theorem \ref{theo-DGS}.
A strongly regular graph that attains the absolute bound is said
to be {\bf extremal}. In fact, an extremal strongly regular graph
is a pentagon, a complete multipartite graph or a so-called Smith graph.
The only known examples of extremal Smith graphs are
the Schl\"{a}fli graph, the McLaughlin graph and their complements.

Bell and Rowlinson \cite{BR} (see also \cite{RP}) generalized Theorem \ref{theo-DGS} as follows.


\begin{theorem}\label{theo-BR-2}
A $k$-regular (simple) graph on $n$ vertices,
whose $(0,1)$-adjacency matrix has eigenvalue
$\lambda\notin\{0,-1,k\}$ of multiplicity $n-d$, $d\geq 2$,
satisfies\footnote{Note that  \cite[Theorem~3.1]{BR} requires $d>2$, but in fact the result also holds for $d=2$.}
\begin{equation}\label{eq-DGS}
 n\leq \frac{1}{2}d(d+1)-1,
\end{equation}
and equality holds if and only if the graph is
an extremal strongly regular graph, with $\lambda$
as its eigenvalue of greatest multiplicity.
\end{theorem}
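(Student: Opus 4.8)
The plan is to realise the vertices as a two-distance set on a pseudo-sphere in an indefinite inner product space, and then run a Blokhuis-type polynomial argument adapted to that setting. Since $\la$ has multiplicity $n-d$, the real symmetric matrix $A-\la I$ has rank $d$; diagonalising and separating the positive and negative eigenvalues, I would write $A-\la I=N^{\mathsf T}JN$, where $J=\mathrm{diag}(I_p,-I_q)$ with $p+q=d$ and $N$ is a $d\times n$ matrix of rank $d$, so that its columns $n_1,\dots,n_n\in\mathbb R^d$ span $\mathbb R^d$. Writing $\ip{x}{y}_J=x^{\mathsf T}Jy$ for the associated nondegenerate (generally indefinite) form, the entries of $A-\la I$ translate into $\ip{n_i}{n_i}_J=-\la$ for all $i$ and $\ip{n_i}{n_j}_J=A_{ij}\in\{0,1\}$ for $i\neq j$; thus the $n_i$ lie on the pseudo-sphere $\{\ip{x}{x}_J=-\la\}$ and realise only the two inner-product values $0$ and $1$. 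Regularity enters through $\sum_j(A-\la I)_{ij}=k-\la$: setting $\sigma=\sum_j n_j$ gives $\ip{n_i}{\sigma}_J=k-\la\neq0$ for every $i$, so all $n_i$ lie on one affine hyperplane, while $\ip{\sigma}{\sigma}_J=n(k-\la)\neq0$ shows $\sigma$ is non-isotropic, so the form is nondegenerate on that hyperplane.

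Next I would interpolate. Using the two off-diagonal values as roots and the diagonal value for normalisation, define
\[
 F_i(x)=\frac{\ip{x}{n_i}_J\,\bigl(\ip{x}{n_i}_J-1\bigr)}{\la(\la+1)},
\]
a genuine polynomial of degree $2$ precisely because $\la\notin\{0,-1\}$, and satisfying $F_i(n_j)=\delta_{ij}$ (at $x=n_i$ the numerator is $(-\la)(-\la-1)=\la(\la+1)$, while for $j\neq i$ one of the two factors vanishes). Hence the $F_i$ are linearly independent. Because every $n_i$ satisfies both the quadratic relation $\ip{x}{x}_J=-\la$ and the linear relation $\ip{x}{\sigma}_J=k-\la$, I may restrict to the $(d-1)$-dimensional affine hyperplane and then reduce modulo the single quadratic relation $q(w):=\ip{w}{w}_J=\rho$ cut out on it, where the restricted form is nondegenerate since $\sigma$ is non-isotropic. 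The $F_i$ thus descend to linearly independent elements of $\mathcal P_{\le2}(\mathbb R^{d-1})/\langle q-\rho\rangle$, whose dimension is $\tfrac{d(d+1)}2-1$, giving $n\le\tfrac12 d(d+1)-1$. This is exactly the step where harmonic analysis on indefinite (hyperbolic) spaces replaces the Euclidean spherical-harmonics count behind the positive definite case of Theorem~\ref{theo-DGS}.

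For the equality statement one direction is bookkeeping: for a strongly regular graph with multiplicities $1,m_1\ge m_2$ and $\la$ the eigenvalue of multiplicity $m_1=n-d$, one has $d=1+m_2$, whence $\tfrac12 d(d+1)-1=\tfrac12 m_2(m_2+3)$; thus $\Gamma$ meets the present bound exactly when it meets the absolute bound \eqref{eq-abs-bound-SRG}, i.e.\ when it is extremal. The substantive direction is the converse: I would show that $n=\tfrac12 d(d+1)-1$ forces the $F_i$ to be a basis of $\mathcal P_{\le2}(\mathbb R^{d-1})/\langle q-\rho\rangle$, so that every linear and quadratic function is reproduced via $g=\sum_i g(n_i)F_i$. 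Feeding the coordinate functions and their products into this reproducing identity pins down the averages of the inner products $\ip{n_i}{n_j}_J$ over the two distance classes; this tight-design condition is equivalent to the adjacency matrix satisfying a quadratic equation off the all-ones direction, so $A$ has only three distinct eigenvalues, the connected regular graph $\Gamma$ is strongly regular, $\la$ is necessarily the eigenvalue of larger multiplicity (as $d=1+m_2$), and $\Gamma$ is extremal by the arithmetic above.

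I expect the converse in the last paragraph to be the main obstacle: converting tightness of the interpolation into combinatorial regularity of the two distance classes (equivalently, into $A$ having three eigenvalues) requires the indefinite analogue of the ``tight $2$-design $\Rightarrow$ association scheme'' argument, and one must track the signature of $J$ carefully to keep the reproducing and Gram computations valid when the form is not positive definite. The upper bound itself is comparatively routine once the factorisation $A-\la I=N^{\mathsf T}JN$ and the interpolation polynomials are in place.
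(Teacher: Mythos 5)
First, a point of calibration: the paper does not prove this theorem itself --- it is quoted from Bell and Rowlinson \cite{BR}, whose argument combines the polynomial method with the star complement technique (the route the paper extends to digraphs in Section~\ref{sect:star}). So the comparison below is with that approach and with the related machinery the paper builds.

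Your proof of the inequality \eqref{eq-DGS} is correct and complete, and it takes the other of the two routes the paper mentions, namely the Blokhuis-style one \cite{Bl} (developed in complex form in Section~\ref{sect:Cpq}): realise the vertices as a two-valued set on a quadric in an indefinite inner product space, cut with the hyperplane supplied by regularity, and count quadratic polynomials modulo the one relation. Your factorisation $A-\la I=N^{\top}JN$ carries the same information as the star-complement identity \eqref{eq-reconstruction} with the form $\langle \mathbf{x},\mathbf{y}\rangle=\mathbf{x}^{\top}(\la I-C)^{-1}\mathbf{y}$, and your interpolants $t(t-1)/(\la(\la+1))$ are, up to sign and normalisation, the $F_u$ of \cite{BR} and of Section~\ref{ssect:starbound}; the hypotheses $\la\notin\{0,-1\}$ and $\la\neq k$ enter exactly where you use them ($F_i(n_j)=\delta_{ij}$, and $\langle\sigma,\sigma\rangle=n(k-\la)\neq0$ so that the hyperplane section is nondegenerate and the quadric restricts to a nonzero polynomial). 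The genuine differences are that you obtain the full saving down to $\tfrac12 d(d+1)-1$ in one stroke by passing to the hyperplane and quotienting by the quadric, where \cite{BR} instead exhibit extra functions independent of the $F_u$, and that you bypass the existence of star complements entirely. Your route is cleaner for the pure dimension count and is precisely what generalises to the inertia-sensitive bounds of Theorem~\ref{theo-main-2}; the star-complement route buys the reconstruction of the graph from $C$ and $B$, which is what \cite{BR} lean on when analysing the extremal graphs.

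There is, however, a genuine gap in the equality characterisation, and it is where you yourself locate it. The direction ``extremal strongly regular with $\la$ of greatest multiplicity $\Rightarrow$ equality'' is indeed just the arithmetic $d=1+m_2$ and $\tfrac12 d(d+1)-1=\tfrac12 m_2(m_2+3)$, matching \eqref{eq-abs-bound-SRG}. The converse is only a plan. To execute it you must (i) carry out the indefinite analogue of the ``tight interpolation $\Rightarrow$ two-design $\Rightarrow$ three eigenvalues'' step: the reproducing identity $g\equiv\sum_i g(n_i)F_i$ holds only modulo the quadric, and the Gram computations involve an indefinite form, so the positivity arguments available in the spherical case of \cite{DGS} are not; one has to extract from it that $A^2$ is a linear combination of $I$, $A$ and $J$; and (ii) dispose of disconnected graphs, since nothing in the hypotheses forces connectivity and ``regular with three eigenvalues'' only yields strong regularity for connected graphs. (Once strong regularity is in hand, the identification of $\la$ as the eigenvalue of greatest multiplicity does follow from the absolute bound, so that part is routine.) As it stands you have a complete proof of the inequality by a legitimate alternative method, and an unexecuted, though plausible, programme for the case of equality.
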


For a large class of strongly regular graphs
Neumaier \cite{Neu} showed that 
\begin{equation}\label{eq-abs-bound-Neu}
n\leq \frac{1}{2}m_i(m_i+1),
\end{equation}
which improves upon the bound in Eq.\ \eqref{eq-abs-bound-SRG}; in Section \ref{sect:final} we discuss a generalization 
of this result for regular connected graphs.

The goal of the present paper is to prove analogues
of the above mentioned results for digraphs.
A {\bf digraph} (a {\bf directed} or {\bf mixed} graph) $\Delta$ consists
of a finite set $V$ of vertices together with a subset $E\subseteq  V\times V$
of ordered pairs of elements of $V$ called {\bf arcs} or {\bf directed edges}.
If $(x,y)\in E$, then we write $x\rightarrow y$. If both $x\rightarrow y$ and
$y\rightarrow x$, then the pair $\{x,y\}$ forms a {\bf digon} of $\Delta$,
which may be thought of as an undirected edge, and we write $x\sim y$ in this case.

Let $\Delta=(V,E)$ be a digraph on $n$ vertices.
For a complex number $\omega\notin \mathbb{R}$ with absolute value $1$, 
we consider the {\bf Hermitian adjacency matrix} $H=H_{\omega}(\Delta)\in \mathbb{C}^{V\times V}$ of $\Delta$
with entries given by:
\begin{align*}
(H)_{xy}=\begin{cases}
1 & \text{ if }x\sim y,\\
\omega & \text{ if }x\rightarrow y,\\
\overline{\omega} & \text{ if }y\leftarrow x,\\
0 & \text{ otherwise},
\end{cases}
\end{align*}
where $\overline{\omega}$ denotes\footnote{In what follows, 
$\overline{\omega}$ denotes 
the complex conjugate of a complex scalar, 
while $M^*$ stands for the Hermitian transpose of a matrix $M$.} the conjugate
of $\omega$.

The notion of the Hermitian adjacency matrix (with $\omega={\bf i}$)
was introduced by Liu and Li \cite{LL} and independently by Guo and Mohar \cite{GM}.
Mohar \cite{M} introduced another type of Hermitian adjacency matrices,
which coincides with ours (see also \cite{Kubota}) in the important case $\omega=\frac{1+{\bf i}\sqrt{3}}{2}$, which 
is the primitive sixth root of unity.

Since $H$ is a Hermitian matrix, the eigenvalues of $H$ are all real,
and their algebraic and geometric multiplicities coincide. 
The eigenvalues interlacing property, which is useful in the study
of simple graphs \cite{H}, also holds for eigenvalues of $H$ and
those of its principal submatrices. On the other hand, as Guo and Mohar
notice \cite{GM}, the eigenvalues of $H$ may behave differently (with respect to
the digraph structure) compared with the eigenvalues of $(0,1)$-adjacency
matrices of simple graphs. For example, some digraph invariants such as
diameter, minimum degree, and number of connected components cannot
be bounded by the spectrum of $H$.

Recall that an eigenvalue $\lambda$ is said to be {\bf non-main}
if its eigenspace is contained in $\langle{\bf 1}\rangle^{\perp}$,
where ${\bf 1}$ is the all-one vector; otherwise we call $\lambda$
the {\bf main} eigenvalue. (In particular, if a simple graph is regular with
valency $k$, then all its eigenvalues but $k$ are non-main.)
With this notation, our first main result reads as follows.


\begin{theorem}\label{theo-main}
Let $H=H_{\omega}(\Delta)$ be a Hermitian adjacency matrix of a digraph of order $n$ with $s$
distinct eigenvalues $\lambda_1>\lambda_2>\cdots >\lambda_s$ of respective multiplicities $m_1,m_2,\ldots,m_s$.
Set $d_i=n-m_i$.
  Then, for an eigenvalue $\lambda_i$ with $d_i\geq 2$ and $\lambda_i\notin\{0,-1\}$, the following holds.
  \begin{itemize}
  \item[$(i)$] 
  If $i\in \{1,2,\ldots,s\}$ and $\lambda_i$ is non-main, then
\begin{align*}
n\leq \begin{cases}
\frac{3d_i(d_i+1)}{2}-1 & \text{~if~}\omega\ne \frac{-1\pm {\bf i}\sqrt{3}}{2},\\
\frac{d_i(d_i+5)}{2} & \text{~if~}\omega=\frac{-1\pm {\bf i}\sqrt{3}}{2}.
\end{cases}
\end{align*}
  \item[$(ii)$] If $i\in \{1,s\}$, then
\begin{align*}
n\leq \begin{cases}
\frac{d_i(3d_i+5)}{2}-1 & \text{~if~}\lambda_i\text{~is main},\\
\frac{(d_i-1)(3d_i+2)}{2}-1 & \text{~if~}\lambda_i\text{~is non-main}.
\end{cases}
\end{align*}
  \end{itemize}
\end{theorem}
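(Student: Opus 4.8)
The plan is to reduce the statement to a bound on the size of a few-distance configuration living on a complex (pseudo-)sphere, and then to bound that size by the dimension of a space of low-degree polynomials. First I would fix the eigenvalue $\lambda_i$, let $E_i$ be the orthogonal projection onto its eigenspace, and set $Q=I-E_i=\sum_{j\neq i}E_j$, of rank $d_i$. On $W:=\operatorname{range}(Q)$ (of dimension $d_i$) I would put the Hermitian form $\langle u,v\rangle_*=v^*(H-\lambda_i I)u$; since $(H-\lambda_i I)Q=H-\lambda_i I$, this form is nondegenerate on $W$, with signature $(p,q)$ counting the eigenvalues of $H$ above and below $\lambda_i$. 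For each vertex $x$ put $q_x=Qe_x\in W$. A short computation gives $\langle q_x,q_y\rangle_*=\overline{(H-\lambda_i I)_{xy}}$, so the self-values are all $-\lambda_i$ and the off-diagonal values lie in $\{0,1,\omega,\overline{\omega}\}$. Thus $\{q_x\}$ is a few-distance set on the pseudo-sphere $\{\mathbf z:\langle\mathbf z,\mathbf z\rangle_*=-\lambda_i\}$. For $i\in\{1,s\}$ the form is definite and this is a genuine sphere, which is exactly the distinction between parts $(ii)$ and $(i)$.

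Next I would build annihilators. Writing $\zeta_x(\mathbf z)=\langle\mathbf z,q_x\rangle_*$, I seek a single polynomial $P$ of Hermitian degree $2$ with $P(\alpha,\overline{\alpha})=0$ for every $\alpha\in\{0,1,\omega,\overline{\omega}\}$ but $P(-\lambda_i,-\lambda_i)\neq0$, and set $F_x=P(\zeta_x,\overline{\zeta_x})$. Then $F_x(q_y)=0$ for $y\neq x$ while $F_x(q_x)\neq0$, so evaluation at the $q_y$ shows the $F_x$ are linearly independent and hence $n\le\dim\operatorname{span}\{F_x\}$; here $d_i\geq2$ guarantees the degree-$2$ construction is meaningful. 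The self-value $P(-\lambda_i,-\lambda_i)$ comes out as a nonzero multiple of $\lambda_i(\lambda_i+1)$, which is precisely what forces the exclusion $\lambda_i\notin\{0,-1\}$.

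The two cases in $(i)$ arise from whether $P$ can avoid the mixed monomial $\zeta\overline{\zeta}=|\langle\cdot,q_x\rangle_*|^2$. Restricting $P$ to $\{\zeta,\overline{\zeta},\zeta^2,\overline{\zeta^2}\}$ and imposing the four vanishing conditions, the system is solvable with $P(-\lambda_i,-\lambda_i)\neq0$ exactly when $\cos 2\theta=\cos\theta$ for $\omega=e^{i\theta}$, i.e.\ when $\omega=\frac{-1\pm\mathbf i\sqrt3}{2}$ is a primitive cube root (equivalently $1+\omega+\overline{\omega}=0$); otherwise the mixed term is unavoidable and the spanning space is larger. This is the mechanism behind the two displayed bounds, the cube-root case being the smaller one. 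To turn $\dim\operatorname{span}\{F_x\}$ into the stated numbers I would develop the complex analogue of Blokhuis' hyperbolic harmonic analysis: restricting to the pseudo-sphere, the constant-norm relation $\langle\mathbf z,\mathbf z\rangle_*=-\lambda_i$ collapses part of the mixed-degree component, and the non-main hypothesis contributes a further linear relation (the all-ones vector lies in $W$) that removes the constant/linear functionals; summing the surviving harmonic pieces gives the bounds in $(i)$.

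For part $(ii)$ the extreme eigenvalue makes $\langle\cdot,\cdot\rangle_*$ definite, so the same scheme runs on an honest complex sphere with the positive-definite (compact) harmonic decomposition, and whether $\lambda_i$ is main or non-main simply toggles the all-ones relation, producing the two formulas. I expect the genuine obstacle to be the dimension count on the complex pseudo-sphere: in the indefinite setting there is no averaging or positivity to lean on, so one must set up the Gegenbauer-type harmonic decomposition directly, track how the constant-norm and non-main relations act on each isotypic component, and check that the cube-root degeneration removes exactly the mixed component. By contrast the reduction via $E_i$, the construction of the annihilators, and their linear independence are all routine once this harmonic count is established.
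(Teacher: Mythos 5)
Your reduction to a few-distance configuration and your identification of the dichotomy at $\omega=\frac{-1\pm\mathbf{i}\sqrt3}{2}$ (via solvability of the restricted annihilator system) match the spirit of the paper, and your determinant criterion is essentially the paper's condition $\omega^2+\overline{\omega}^2-\omega-\overline{\omega}\neq0$. But there is a genuine gap at the heart of part $(i)$ in the generic case. The bound $\frac{3d(d+1)}{2}$ corresponds to functions supported on the monomials $\{\zeta^2,\zeta\overline{\zeta},\overline{\zeta}\}$ (span of dimension $\binom{d+1}{2}+d^2+d$), and for that support the $3\times3$ system forcing vanishing at the three nonzero off-diagonal values has determinant a nonzero multiple of $\sin\theta(\cos2\theta-\cos\theta)$ for $\omega=e^{\mathbf{i}\theta}$: it is nonsingular precisely when $\omega\neq\frac{-1\pm\mathbf{i}\sqrt3}{2}$, so \emph{no} annihilator with that support exists in the generic case. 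Enlarging the support to restore solvability (adding $\zeta$ or $\overline{\zeta}^2$) pushes the ambient dimension up to at least $\frac{d(3d+5)}{2}$, and the harmonic/constant-norm reduction on the pseudo-sphere you invoke only trims each mixed isotypic component by one, which cannot close the gap of $d+1$ down to $\frac{3d(d+1)}{2}-1$. The paper avoids this entirely: its functions $F_u=a\zeta_u^2+\zeta_u\overline{\zeta_u}+a\overline{\zeta_u}$ are \emph{not} annihilators --- the evaluation matrix $\big(F_u(\mathbf{s}_v)\big)$ equals $cI+H$ for a suitable constant $c$ --- and linear independence is obtained by extracting a \emph{second} identity through polarization at $\mathbf{x}=\mathbf{1}$ (this is exactly where the non-main hypothesis enters, via $\langle\mathbf{s}_u,\mathbf{1}\rangle=-1$), showing that any dependence vector lies in the kernels of both $cI+H$ and $-\lambda I+H$ with $c\neq-\lambda$. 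Your proposal has no substitute for this mechanism.

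Two further omissions: the ``$-1$'' refinements, which you state but never derive, are substantial --- in part $(i)$ they come from adjoining one extra function built from $\mathbf{1}$ and a Cauchy--Schwarz equality argument that uses $d\geq2$; in part $(ii)$ the main case requires ruling out tight $\mathcal{S}$-codes by computing the eigenmatrices of the would-be association scheme and deriving integrality contradictions. And in part $(ii)$ for non-main $\lambda_i$, ``toggling the all-ones relation'' must be implemented concretely: one perturbs $H$ by a multiple of $J$ chosen via the main angles so that the rank drops to $d_i-1$ and positive semidefiniteness is preserved, landing a three-distance set on a sphere of one lower dimension. None of these steps is routine, and the part $(i)$ linear-independence issue in particular is a missing idea rather than a missing detail.
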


To put this result in a more general context, we observe that
the codimension $d$ of the $\lambda$-eigenspace equals
the rank of $H-\lambda I$. Let $N(d,L)$ denote the maximum size of a square
matrix of rank at most $d$, whose off-diagonal entries belong to a set $L$.
Many important applications of linear algebra to combinatorics reduce to bounding
the size of a matrix with few distinct entries and a given rank, i.e., $N(d,L)$.
Recently Bukh \cite{Bukh} obtained some general asymptotic results regarding this problem;
in particular, he showed that
\begin{equation}\label{eq-Bukh}
  N(d,L)\leq {d+|L|\choose |L|}\sim \frac{d^{|L|}}{|L|!}+O(d^{|L|-1}).
\end{equation}

It is observed in \cite{Bukh} that using the application-specific structure of a matrix
may improve upon the upper bound \eqref{eq-Bukh}.
For example, if $\lambda_{\rm min}\ne 0$ is the least eigenvalue
of multiplicity $m$ of the $(0,1)$-adjacency matrix $A$, then $\frac{-1}{\lambda_{\rm min}}A+I$
is positive semidefinite of rank $d=n-m$.
Thus, it can be seen as the Gram matrix of a set of $n$ unit vectors in $\mathbb{R}^{d}$
with two distinct inner products, i.e., a spherical 2-distance set. 
In order to bound $n$, Delsarte, Goethals and Seidel \cite{DGS} used
some elements of the theory of harmonic analysis on spheres.
(A bit worse bound can be shown by a much simpler argument of Koornwinder \cite{K}.)
Nevertheless, the bound from Theorems \ref{theo-DGS} and \ref{theo-BR-2}
is just slightly better than the one in \eqref{eq-Bukh}.

The same phenomenon happens when $A$ is a symmetric $(0,\pm 1)$-matrix,
i.e., the adjacency matrix of a signed graph, see a recent result in \cite{Rsign}.
It is then remarkable that the bounds in Theorem \ref{theo-main} significantly
improve on the one in \eqref{eq-Bukh}. 

The proofs of the above-mentioned results are variations of the polynomial method.
In particular, Bell and Rowlinson \cite{BR} used the polynomial method combined
with the star complement technique, which allows to analyze
the structure of a graph with equality in Theorem \ref{theo-BR-2}.

In Section \ref{ssect:star}, we observe that the theory of star complements in
simple graphs can be extended  to that of the Hermitian adjacency matrices of digraphs. 
This provides us a tool to prove Theorem \ref{theo-main}(i), see Section \ref{ssect:starbound}.
The proof of Theorem \ref{theo-main}(ii) in Section \ref{sect:codes} is based 
on the harmonic analysis of codes of the complex unit sphere,
which was developed by Roy and Suda \cite{RS}, see Section \ref{sect:harm}.
(It is interesting that these approaches give different bounds 
unlike the case of $(0,1)$-adjacency matrices of simple graphs.)

Further, Blokhuis \cite{Bl} extended the idea of harmonic analysis on spheres to an arbitrary (not necessarily positive-definite) inner product space,
which can be used to generalize Theorem \ref{theo-DGS} to any eigenvalue of $A$.
In Section \ref{sect:Cpq}, we adapt the Blokhuis' theory to a complex hyperbolic space, 
which is then used to obtain upper bounds on the order of a digraph with Hermitian adjacency matrix $H$
in terms of inertia of $H-\lambda I$ for an eigenvalue $\lambda$.

\begin{theorem}\label{theo-main-2}
Let $H=H_\omega(\Delta)$ be a Hermitian adjacency matrix of a digraph of order $n$ with $s$
distinct eigenvalues $\lambda_1>\lambda_2>\cdots >\lambda_s$ of respective multiplicities $m_1,m_2,\ldots,m_s$.
Set $p_i=\sum_{j=i+1}^{s}m_j,q_i=\sum_{j=1}^{i-1}m_j$ if $\lambda_i>0$, or $p_i=\sum_{j=1}^{i-1}m_j,q_i=\sum_{j=i+1}^{s}m_j$ if $\lambda_i<0$.
Then, for $i\in\{2,\ldots,s-1\}$, the number $2n$ is bounded above by the corresponding value in Table \ref{tab:Cpq}.
\begin{table}[]
\begin{tabular}{|c|c|c|c|}
\hline
           $Re(\omega)$               & $\lambda_i<-1$ & $-1<\lambda_i<0$ & $0<\lambda_i$ \\ \hline
$<-\frac{1}{2}$ & $p_i^2+4 p_i q_i+q_i^2+p_i +5q_i$ & $2p_i^2+2q_i^2+2p_iq_i+4p_i$  & $p_i^2+4 p_i q_i+q_i^2+5 p_i+q_i$\\ \hline
$=-\frac{1}{2}$ & $p_i^2+q_i^2+p_i +5q_i$  & $2p_iq_i+4p_i$ & $p_i^2+q_i^2+5p_i+q_i$ \\ \hline
$>-\frac{1}{2}$ & $3p_i^2+3q_i^2+p_i+5q_i$ & $6p_i q_i+4 p_i$ & $3p_i^2+3q_i^2+5p_i+q_i$ \\ \hline
\end{tabular}
\caption{\label{tab:Cpq}Upper bounds for Theorem \ref{theo-main-2}.}
\end{table}
\end{theorem}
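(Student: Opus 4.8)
The plan is to translate the spectral condition at $\lambda_i$ into a configuration of vectors in a complex pseudo-Euclidean space and then bound the number of vectors by an adaptation of Blokhuis' harmonic method to complex hyperbolic geometry. First I would consider the Hermitian matrix $M=H-\lambda_i I$. Its diagonal entries all equal $-\lambda_i$ and its off-diagonal entries lie in $\{0,1,\omega,\overline{\omega}\}$, while its inertia is $(n_+,m_i,n_-)$ with $n_+=\sum_{j<i}m_j$ and $n_-=\sum_{j>i}m_j$. Writing $M=G^{*}J G$ with $J=\mathrm{diag}(I_{n_+},-I_{n_-})$ exhibits the columns of $G$ as vectors $v_1,\dots,v_n$ in $\mathbb{C}^{n_+,n_-}$ for the indefinite Hermitian form $\langle u,w\rangle=u^{*}Jw$, so that $\langle v_x,v_x\rangle=-\lambda_i$ for all $x$ and $\langle v_x,v_y\rangle\in\{0,1,\omega,\overline{\omega}\}$ for $x\neq y$. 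The sign of $-\lambda_i$ decides whether the $v_x$ are positive or negative vectors, which is exactly why the definitions of $p_i,q_i$ interchange $n_+$ and $n_-$ according to the sign of $\lambda_i$; after this normalization the three columns of Table~\ref{tab:Cpq} correspond to $\lambda_i<-1$, $-1<\lambda_i<0$ and $\lambda_i>0$, and the denominator $2$ absorbs a half-integer appearing in the dimension count.

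Next I would set up the complex-hyperbolic analogue of Blokhuis' framework, using the complex spherical harmonics of Roy and Suda (Section~\ref{sect:harm}). On the pseudo-sphere $\{v:\langle v,v\rangle=-\lambda_i\}$ the relevant functions are polynomials in the coordinates of $v$ and their conjugates, graded by bidegree $(a,b)$ and reduced modulo the ideal generated by $\langle v,v\rangle+\lambda_i$; the harmonic components $\Harm(a,b)$ of $\mathbb{C}^{n_+,n_-}$ furnish a basis adapted to this grading. For each $x$ I would build an annihilator $F_x$ out of $\langle v_x,v\rangle$ and $\langle v,v_x\rangle=\overline{\langle v_x,v\rangle}$ designed to vanish at every $v_y$ with $y\neq x$ and to be nonzero at $v_x$. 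The key algebraic input is that the admissible off-diagonal values fall into the classes $0$, $1$ and $\{\omega,\overline{\omega}\}$, distinguished by $|\langle v_x,v\rangle|^{2}\in\{0,1\}$ together with $\langle v_x,v\rangle+\langle v,v_x\rangle\in\{0,2,2\Re(\omega)\}$; a product of the corresponding factors then kills all cross terms, and $F_x(v_x)\neq0$ precisely because $\lambda_i\notin\{0,-1\}$ (the boundary value $\lambda_i=-\Re(\omega)$ is handled separately). Linear independence of the $F_x$ is immediate from $F_x(v_y)=\delta_{xy}F_x(v_x)$, so $n$ is at most the dimension of the span of the admissible bidegrees after reduction.

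The heart of the argument is the dimension computation, and this is where the trichotomy in $\Re(\omega)$ enters. When $\Re(\omega)=-\tfrac12$ the three nonzero values $1,\omega,\overline{\omega}$ are exactly the cube roots of unity, so $(u-1)(u-\omega)(u-\overline{\omega})=u^{3}-1$ collapses; this degeneracy removes a harmonic component and yields the smallest entries of the table. For $\Re(\omega)\neq-\tfrac12$ no such collapse occurs, and the sign of $2\Re(\omega)+1$ controls the signature of the Gram matrix formed by the harmonic evaluations, which in turn dictates which components may carry a linearly independent family; tracking this through $\Harm(a,b)$ for the bidegrees actually used produces the two remaining rows. Computing $\dim\Harm(a,b)$ for $\mathbb{C}^{n_+,n_-}$, summing over the relevant bidegrees, and re-expressing $n_+,n_-$ in terms of $p_i,q_i$ gives the quadratic expressions, with the linear terms $p_i+5q_i$ versus $5p_i+q_i$ reflecting the $p_i\leftrightarrow q_i$ swap between the $\lambda_i<-1$ and $\lambda_i>0$ columns.

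The main obstacle I anticipate is precisely this dimension and positivity bookkeeping in the indefinite setting. Unlike the compact complex sphere, the pseudo-sphere carries no finite $L^2$ decomposition, so the bound cannot be read off from a single harmonic space; one must instead verify directly that the annihilators land in an explicit finite-dimensional space of bounded bidegree modulo the sphere relation, and then carry out the case analysis showing that the effective dimension drops exactly as $\mathrm{sign}(2\Re(\omega)+1)$ prescribes. Keeping the degenerate evaluations (the value $\lambda_i=-\Re(\omega)$, or bidegrees where $\Harm(a,b)$ partially vanishes for small $n_+$ or $n_-$) from corrupting the count, and confirming that the half-integer in the dimension is genuinely accounted for by the factor $2n$, are the delicate points.
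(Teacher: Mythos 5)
Your setup is the right one --- passing to the indefinite Gram matrix $I-\tfrac{1}{\lambda_i}H$ of vectors in $\mathbb{C}^{p,q}$ with inner products in $\{0,-\tfrac1{\lambda_i},-\tfrac{\omega}{\lambda_i},-\tfrac{\overline\omega}{\lambda_i}\}$, the swap of $p$ and $q$ with the sign of $\lambda_i$, and the use of harmonic polynomials on this space all match Section~\ref{sect:Cpq}. But the counting mechanism you propose cannot deliver the entries of Table~\ref{tab:Cpq}. You build one annihilator $F_x$ per point with $F_x(v_y)=\delta_{xy}F_x(v_x)$ and conclude that $n$ is bounded by the dimension of the space the $F_x$ live in. That Koornwinder-style argument bounds $n$ by the \emph{full} dimension $\sum_{(k,\ell)\in\mathcal{S}}\bigl(\mu_{k,\ell}+\nu_{k,\ell}\bigr)=\sum_{(k,\ell)\in\mathcal{S}}\dim\Harm_B(d,k,\ell)$, which does not depend on $\Re(\omega)$ or on the sign of $\lambda_i$ at all and is strictly larger than the table values (e.g.\ for $\Re(\omega)>-\tfrac12$ and $\lambda_i>0$ the claimed bound is $\mu_{0,0}+\mu_{1,0}+\mu_{0,1}+\mu_{1,1}+\mu_{2,0}$, not $\sum(\mu_{k,\ell}+\nu_{k,\ell})$). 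The trichotomy in the table is produced by a genuinely different device: a \emph{single} annihilator polynomial $\phi(z)=\sum_{(k,\ell)\in\mathcal{S}}a_{k,\ell}g_{k,\ell}(z)$ with real coefficients, vanishing on all of $A(X)$ and equal to $1$ at $1$, combined with the addition formula in $\mathbb{C}^{p,q}$ (Theorem~\ref{thm:add-form}) splitting each $g_{k,\ell}$ into a positive part ($\mu_{k,\ell}$ functions) and a negative part ($\nu_{k,\ell}$ functions). This yields the identity $S\bigl(\bigoplus a_{k,\ell}I_{\mu_{k,\ell},\nu_{k,\ell}}\bigr)S^*=I_n$, and Blokhuis' inertia lemma ($SI_{s,t}S^*=I_n$ forces $n\le s$) then bounds $n$ by $\sum\sigma_{k,\ell}$, where each $(k,\ell)$ contributes only $\mu_{k,\ell}$ or $\nu_{k,\ell}$ according to the sign of $a_{k,\ell}$ (Theorem~\ref{thm:add-upperbound}).

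You do gesture at ``the signature of the Gram matrix formed by the harmonic evaluations,'' but without the single annihilator and its explicit coefficients this cannot be made to work: the case split in the table comes precisely from computing the signs of $a_{0,0},a_{1,0},a_{0,1},a_{1,1},a_{2,0}$ for the polynomial exhibited before Theorem~\ref{thm:main2}, where the factor $x-x^2+y^2=-(2x+1)(x-1)$ changes sign at $\Re(\omega)=-\tfrac12$ (making $a_{0,0}=a_{1,1}=0$ there, which is the real reason those rows are smaller --- not a collapse of $(u-1)(u-\omega)(u-\overline\omega)$) and the factor $\tfrac{\lambda}{1+\lambda}$ changes sign at $\lambda=-1$ and $\lambda=0$. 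To repair the proposal you would need to: (a) state and use the addition formula for $\Harm_B^{\pm}(d,k,\ell)$; (b) exhibit a global annihilator with computable coefficient signs; and (c) invoke the inertia lemma in place of plain linear independence.
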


Consider the following digraph:
\begin{figure}[ht!]
\centering
\begin{tikzpicture}
	\begin{pgfonlayer}{nodelayer}
\node [style=new] (3) at (-1, 2.5) {};
		\node [style=new] (4) at (-1, 0.5) {};
		\node [style=new] (5) at (1, 2.5) {};
		\node [style=new] (6) at (1, 0.5) {};
	\end{pgfonlayer}
	\begin{pgfonlayer}{edgelayer}
\draw [directed, bend right=15] (3) to (5);
		\draw [directed, bend right=15] (4) to (6);
		\draw [directed] (4) to (3);
		\draw [directed] (3) to (6);
		\draw [directed] (6) to (5);
		\draw [directed] (5) to (4);
		\draw [directed, bend right=15] (6) to (4);
		\draw [directed, bend right=15] (5) to (3);
	\end{pgfonlayer}
\end{tikzpicture}
\end{figure}\\
The eigenvalues of its Hermitian adjacency matrix $H_{\bf i}$ are $-3$ of multiplicity $1$ and $1$ of multiplicity $3$ (see \cite[Proposition 5.2]{GM}). Thus, the (main) eigenvalue $1$ has codimension $d=1$ and it attains equality in $n\leq \frac{d(3d+5)}{2}$ (see Section \ref{sect:codes}), 
which is slightly improved in Theorem \ref{theo-main}(ii) for the case $d>1$.
Besides this example, we don't know much about tightness of the bounds obtained. This and some other open questions are discussed in Section \ref{sect:final}.

\section{Star complements}\label{sect:star}

In this section, we extend the theory of star complements in simple graphs
to that in digraphs, and then use it to prove Theorem \ref{theo-main}(i).

\subsection{Star complements in Hermitian matrices}\label{ssect:star}

Let $\Gamma=(V,E)$ be a simple graph. The eigenvalues of $\Gamma$ are
the eigenvalues of its $(0,1)$-adjacency matrix. Let $\Gamma$
have an eigenvalue $\lambda$ of multiplicity $m$.
A {\bf star set} for $\lambda$ in $\Gamma$ is a subset $X$ of $V$
such that $|X|=m$ and $\lambda$ is not an eigenvalue of the graph
induced on $V\setminus X$. If this is the case, the induced subgraph
on $V\setminus X$ is called a {\bf star complement} for $\lambda$
in $\Gamma$. Star sets and star complements exist for any eigenvalue
in any graph. We refer the reader to \cite{RT} for more results on star
complements and their applications.

The notion of star sets and star complements in simple graphs can be
straightforwardly extended to those in the more general setting of
Hermitian matrices, in particular, Hermitian adjacency matrices of digraphs.
We only show the following two results, which are essential
for proving Theorem \ref{theo-main}, while the comprehensive theory
of star sets and star complements in digraphs can be elaborated elsewhere.

Let $V$ be a finite set with $n$ elements, and
$H\in \mathbb{C}^{V\times V}$ be a Hermitian matrix with an eigenvalue $\lambda$ of multiplicity $m$.
We define a {\bf star set} for $\lambda$ in $H$ to be
a subset $X\subseteq V$ such that $|X|=m$ and $\lambda$ is not
an eigenvalue of the principal submatrix of $H$ corresponding
to $V\setminus X$ (i.e., with the rows and columns in $V\setminus X$).
If $X$ is a star set for $\lambda$ in $H$, then
we call $V\setminus X$ a {\bf star complement}
for $\lambda$ in $H$. When $H$ is the Hermitian adjacency matrix
of a digraph $\Delta$, by eigenvalues, star sets and star complements
of $\Delta$ we mean those of $H$, respectively.

\begin{lemma}\label{lemma-starexistence}
Let $H$ have an eigenvalue $\lambda$ of multiplicity $m$.
Then there exists a set $X\subseteq V$ such that $|X|=m$
and $\lambda$ is not an eigenvalue of the principal submatrix
of $H$ corresponding to $V\setminus X$, i.e., $X$ is a star set
for $\lambda$ in $H$, and $V\setminus X$ is a star complement
for $\lambda$ in $H$.
\end{lemma}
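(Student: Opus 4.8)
The plan is to reduce the lemma to a purely linear-algebraic statement about Hermitian matrices and then settle that statement by a spectral argument. First I would record the elementary reformulation: a subset $X$ with $|X|=m$ is a star set for $\lambda$ exactly when the principal submatrix $H[V\setminus X]-\lambda I=(H-\lambda I)[V\setminus X]$ is nonsingular, since $\lambda$ is an eigenvalue of $H[V\setminus X]$ if and only if this matrix is singular. Setting $M:=H-\lambda I$, which is Hermitian with $\ker M$ equal to the $\lambda$-eigenspace of $H$, we have $\rank M=n-m$. Hence it suffices to produce an index set $S\subseteq V$ with $|S|=\rank M=n-m$ such that the principal submatrix $M[S]$ is nonsingular; taking $X:=V\setminus S$ then gives $|X|=m$ and exhibits the required star set.

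The crux is therefore the general fact that a Hermitian matrix $M$ of rank $r$ possesses a nonsingular principal $r\times r$ submatrix. To prove this I would invoke the spectral decomposition $M=U\Lambda U^{*}$, where $\Lambda$ is the $r\times r$ diagonal matrix of nonzero eigenvalues of $M$ and $U\in\mathbb{C}^{V\times r}$ has orthonormal columns, so that $\rank U=r$. For any $S$ with $|S|=r$ one checks directly that $M[S]=U_{S}\Lambda U_{S}^{*}$, where $U_{S}$ is the $r\times r$ matrix consisting of the rows of $U$ indexed by $S$; consequently $\det M[S]=\abs{\det U_{S}}^{2}\det\Lambda$. As $\det\Lambda\neq 0$, the submatrix $M[S]$ is nonsingular precisely when the rows of $U$ indexed by $S$ are linearly independent, and such a set $S$ exists because the row space of the rank-$r$ matrix $U$ has dimension $r$. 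Applying this to $M=H-\lambda I$ and $r=n-m$ completes the argument.

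The point that deserves the most care---and the only place where Hermiticity is genuinely used---is the requirement that the nonsingular submatrix be \emph{principal}, since only principal submatrices of $H$ correspond to induced subdigraphs on $V\setminus X$. A general matrix of rank $r$ is guaranteed a nonsingular $r\times r$ submatrix, but its row- and column-index sets may differ, which would be useless here. The identity $M[S]=U_{S}\Lambda U_{S}^{*}$ is exactly what upgrades ``$U$ has $r$ independent rows'' to ``$M$ has a nonsingular principal block,'' so I expect this factorization to be the load-bearing step. A more hands-on alternative would be to pick the $m$ coordinates of $X$ so that the restriction of the $\lambda$-eigenspace to the $X$-coordinates is injective; but checking that this choice really makes $(H-\lambda I)[V\setminus X]$ nonsingular is no easier than the argument above, so the spectral factorization seems the most transparent route.
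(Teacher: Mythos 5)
Your proposal is correct and follows essentially the same route as the paper: both reduce the lemma to the fact that the Hermitian matrix $\lambda I-H$, having rank $n-m$, admits a nonsingular principal submatrix of order $n-m$, whose complementary index set is then the desired star set. The only difference is that the paper asserts this fact without proof, whereas you supply a justification via the factorization $M[S]=U_S\Lambda U_S^*$ and $\det M[S]=\lvert\det U_S\rvert^2\det\Lambda$, which is a valid and welcome addition.
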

\begin{proof}
   Since $\lambda I-H$ has rank $n-m$, it has a principal submatrix $\lambda I-C$
   of order and rank $n-m$; hence $C$ has no eigenvalue $\lambda$.
\end{proof}

\begin{theorem}[{cf. \cite[Theorems~7.4.1,~7.4.4]{CRS}}]\label{theo-starset}
Let $X$ be an $m$-element subset of $V$,
and write $H$ partitioned as follows:
\begin{equation*}
\left(\begin{matrix}
  H_X & B^* \\
  B & C
\end{matrix}\right),
\end{equation*}
where $H_X$ is the principal submatrix of $H$
corresponding to $X$. Then $X$ is a star set
for an eigenvalue $\lambda$ in $H$ if and only if
$\lambda$ is not an eigenvalue of $C$ and
\begin{equation}\label{eq-reconstruction}
  \lambda I-H_X = B^*(\lambda I-C)^{-1}B,
\end{equation}
in which case the $\lambda$-eigenspace of $H$
consists of the vectors
\begin{equation}
  \left(
  \begin{matrix}
    {\bf x} \\
    (\lambda I-C)^{-1}B{\bf x}
  \end{matrix}
  \right)\quad ({\bf x}\in \mathbb{C}^m).
\end{equation}
\end{theorem}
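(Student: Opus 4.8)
The plan is to expand the eigenvalue equation $(H-\lambda I)v=0$ in the given block form and exploit the invertibility of $\lambda I-C$ to parametrize the $\lambda$-eigenspace by vectors ${\bf x}\in\mathbb{C}^m$. Writing an arbitrary vector as $v=\left(\begin{smallmatrix}{\bf x}\\ {\bf y}\end{smallmatrix}\right)$ with ${\bf x}\in\mathbb{C}^{X}$ and ${\bf y}\in\mathbb{C}^{V\setminus X}$, the condition $(H-\lambda I)v=0$ splits into the pair
\[
(H_X-\lambda I){\bf x}+B^*{\bf y}=0,\qquad B{\bf x}+(C-\lambda I){\bf y}=0.
\]

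First I would treat the case in which $\lambda$ is not an eigenvalue of $C$, so that $\lambda I-C$ is invertible. The second equation then forces ${\bf y}=(\lambda I-C)^{-1}B{\bf x}$, and substituting into the first yields $M{\bf x}=0$, where $M:=(\lambda I-H_X)-B^*(\lambda I-C)^{-1}B$ is an $m\times m$ matrix. Since ${\bf y}$ is completely determined by ${\bf x}$, the map
\[
{\bf x}\longmapsto \begin{pmatrix}{\bf x}\\ (\lambda I-C)^{-1}B{\bf x}\end{pmatrix}
\]
is a linear isomorphism from $\ker M$ onto the $\lambda$-eigenspace of $H$: it is injective because the first coordinate recovers ${\bf x}$, and it is onto because every $\lambda$-eigenvector must have its lower block given by the displayed formula. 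In particular, the multiplicity of $\lambda$ equals $\dim\ker M$.

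With this parametrization the equivalence is a dimension count. As $M$ is $m\times m$ we have $\dim\ker M\le m$, with equality precisely when $M=0$, i.e.\ exactly when the reconstruction identity $\lambda I-H_X=B^*(\lambda I-C)^{-1}B$ holds. If $X$ is a star set, then by definition $\lambda$ is not an eigenvalue of $C$, so we are in the above case and the multiplicity equals $|X|=m$; this forces $\dim\ker M=m$, hence $M=0$, which is the reconstruction identity. Conversely, if $\lambda$ is not an eigenvalue of $C$ and the reconstruction identity holds, then $M=0$, so the multiplicity equals $\dim\ker M=m=|X|$, which together with the fact that $\lambda$ is not an eigenvalue of $C$ is precisely the assertion that $X$ is a star set. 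Finally, when $M=0$ every ${\bf x}\in\mathbb{C}^m$ lies in $\ker M$, so the eigenspace consists exactly of the vectors displayed in the statement.

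The computation is elementary once the block decomposition is in place, and I do not expect a serious obstacle beyond bookkeeping. The one point deserving care is the logical shape of the equivalence: the size condition $|X|=m$ is already built into the notion of a star set, so the real content of the reconstruction identity is that it upgrades the a priori inequality $\dim\ker M\le m$ to an equality, thereby certifying that the multiplicity of $\lambda$ is as large as $|X|$. I would also state explicitly that the hypothesis that $\lambda$ is not an eigenvalue of $C$ is exactly what makes $\lambda I-C$ invertible, and note that the Hermitian structure enters only through the appearance of $B^*$ in the off-diagonal block, playing no further role in the argument.
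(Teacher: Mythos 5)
Your proof is correct. It differs in mechanism from the paper's, though both are elementary block-matrix arguments. The paper proves the forward direction by a row-space argument: since $\lambda I-H$ and its bottom block row $(-B \mid \lambda I-C)$ both have rank $n-m$, the top block row must be $L$ times the bottom one for some matrix $L$, and eliminating $L$ from $\lambda I-H_X=-LB$ and $-B^*=L(\lambda I-C)$ yields the reconstruction identity; the converse and the eigenspace description are then dismissed as ``direct calculations.'' You instead work with the Schur complement $M=(\lambda I-H_X)-B^*(\lambda I-C)^{-1}B$ and observe that ${\bf x}\mapsto\bigl({\bf x},(\lambda I-C)^{-1}B{\bf x}\bigr)$ is an isomorphism from $\ker M$ onto the $\lambda$-eigenspace, so that the multiplicity of $\lambda$ equals $\dim\ker M=m-\rank M$, and the star-set condition (multiplicity $=|X|=m$) is equivalent to $M=0$. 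The two routes are essentially dual ways of saying $\rank(\lambda I-H)=\rank(\lambda I-C)+\rank M$, but yours has the advantage of delivering all three assertions of the theorem --- both implications and the parametrization of the eigenspace --- from a single computation, whereas the paper's explicit argument covers only one direction. Your closing remarks on where the hypotheses enter are accurate; in particular you are right that the size condition $|X|=m$ is what upgrades $\dim\ker M\le m$ to $M=0$.
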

\begin{proof}
  Let $X$ be a star set for $\lambda$, and so 
  $\lambda$ is not an eigenvalue of $C$.
  It follows that
  \begin{equation*}
  \lambda I-H =
    \left(\begin{matrix}
      \lambda I - H_X & -B^* \\
      -B & \lambda I - C
    \end{matrix}\right),
  \end{equation*}
  where both $\lambda I-H$ and $\lambda I-C$ are of rank $n-m$.
  Therefore, there exists a matrix $L$ such that
  \begin{equation*}
    \left(\begin{matrix}
      \lambda I - H_X & -B^*
    \end{matrix}\right) = L
    \left(\begin{matrix}
      -B & \lambda I - C
    \end{matrix}\right),
  \end{equation*}
  whence
  $\lambda I-H_X=-LB$ and $-B^*=L(\lambda I-C)$, and Eq.\ \eqref{eq-reconstruction} follows.
  The rest (including the ``only if'' part) of the proof follows from direct calculations.
\end{proof}


In what follows, let $V=\{1,2,\ldots,n\}$, $\lambda$ be an eigenvalue of $H$
of multiplicity $m$ and write $d=n-m$. By Lemma \ref{lemma-starexistence}, there exists
a star set $X$ for $\lambda$ in $H$. Without loss of generality, we assume that $X=\{1,2,\ldots,m\}$.
Following \cite{BR}, we extend the notation of Theorem \ref{theo-starset} as follows:
\begin{itemize}
  \item define
\begin{equation}\label{eq-inprod}
  \langle {\bf x}, {\bf y}\rangle := {\bf x}^*(\lambda I-C)^{-1}{\bf y}~~~({\bf x},{\bf y}\in \mathbb{C}^d),
\end{equation}
  \item denote the columns of $B$ by ${\bf b}_u$, where $u\in \{1,2,\ldots,m\}$,
  \item and define $S:=(B\mid C-\lambda I)$ with columns ${\bf s}_u$, $u\in \{1,2,\ldots,n\}$,
\end{itemize}
so that, in particular,
${\bf s}_u={\bf b}_u$ for $u\in \{1,2,\ldots,m\}$. Then
\begin{equation}\label{eq-H}
  \lambda I-H = S^*(\lambda I-C)^{-1}S.
\end{equation}

Let $\mathcal{E}(\lambda)$ be the $\lambda$-eigenspace of $H$, and $\mathcal{E}(\lambda)^{\perp}$
its orthogonal complement in $\mathbb{C}^n$.

\begin{lemma}\label{lemma-split}
  Let ${\bf w}\in \mathcal{E}(\lambda)^{\perp}$, and write ${\bf w}=({\bf p}\mid {\bf q})^{\top}=(w_1,\ldots,w_n)^{\top}$, where
  ${\bf p}=(w_1,\ldots,w_m)^{\top}$, ${\bf q}=(w_{m+1},\ldots,w_n)^{\top}$. Then,
  for $u\in \{1,2,\ldots,n\}$, we have
  \begin{equation*}
    \langle {\bf s}_u,{\bf q}\rangle = -w_u.
  \end{equation*}
\end{lemma}
\begin{proof}
Let $\{{\bf e}_1,\ldots,{\bf e}_m\}$ be the standard basis of $\mathbb{C}^m$, and let
$\{{\bf e}_{m+1},\ldots,{\bf e}_n\}$ be the standard basis of $\mathbb{C}^d$.
Since ${\bf w}\in \mathcal{E}(\lambda)^{\perp}$, Theorem \ref{theo-starset} implies that,
for $u\in \{1,2,\ldots,m\}$,
\begin{equation*}
  ({\bf p}^* \mid {\bf q}^*)\left(
  \begin{matrix}
    {\bf e}_u \\
    (\lambda I-C)^{-1}{\bf b}_u
  \end{matrix}
  \right)
  = 0,
\end{equation*}
whence $\langle {\bf q},{\bf s}_u\rangle = -\overline{w_u}$ and so $\langle {\bf s}_u,{\bf q}\rangle=-w_u$.

For $u\in \{m+1,\ldots,n\}$, we have
\begin{equation*}
  \langle {\bf s}_u,{\bf q}\rangle = {\bf e}_u^{\top}(C-\lambda I)(\lambda I-C)^{-1}{\bf q}=-{\bf e}_u^{\top}{\bf q}=-w_u,
\end{equation*}
which completes the proof.
\end{proof}

\subsection{The proof of Theorem \ref{theo-main}(i)}\label{ssect:starbound}

Let $\Delta$ be a digraph with Hermitian adjacency matrix $H=H_{\omega}(\Delta)$.
Following the notation from Section \ref{ssect:star},
assume that $\Delta$ satisfies the condition of Theorem \ref{theo-main}
and $\lambda$ is one of its non-main eigenvalues.                                              
By Eqs. \eqref{eq-inprod}, \eqref{eq-H}, for all vertices $u,v$ of $\Delta$, we have:
\begin{equation}
  \langle {\bf s}_u,{\bf s}_v\rangle = \left\{\begin{matrix}
                                               \lambda & \text{if~}u=v, \\
                                               -1 & \text{if~}u\sim v, \\
                                               -\omega & \text{if~}u\rightarrow v,\\
                                               -\overline{\omega}  & \text{if~}u\leftarrow v, \\
                                               0  & \text{~otherwise,}
                                             \end{matrix}\right.
\end{equation}
and note that $\langle {\bf s}_u,{\bf s}_v\rangle=\overline{\langle {\bf s}_v,{\bf s}_u\rangle}$.

Theorem \ref{theo-main}(i) follows from Theorems \ref{th-case1-improv} and \ref{th-case2}
proven in the following two sections.

\subsubsection{Case $\omega\ne \frac{-1\pm{\bf i}\sqrt{3}}{2}$}

Assume that $\omega\ne \frac{-1\pm{\bf i}\sqrt{3}}{2}$.
Then $\omega^2+\overline{\omega}^2-\omega-\overline{\omega}\neq 0$.
Define functions $F_1,\ldots,F_n$ by 
\begin{equation*}
  F_u({\bf x})=a\langle {\bf s}_u,{\bf x}\rangle^2+\langle {\bf s}_u,{\bf x}\rangle\langle {\bf s}_u,{\bf x}\rangle^*+a\langle {\bf s}_u,{\bf x}\rangle^*\quad(u\in V,~{\bf x}\in \mathbb{C}^d)
\end{equation*}
where $a=\frac{\omega+\overline{\omega}-2}{\omega^2+\overline{\omega}^2-\omega-\overline{\omega}}$ and observe that
\begin{equation}\label{eq-Fa-case1}
  F_u({\bf s}_v) = \left\{\begin{matrix}
                                               a(\lambda^2+\lambda)+\lambda^2 & \text{if~}u=v, \\
                                               1 & \text{if~}u\sim v, \\
                                               \omega & \text{if~}u\rightarrow v,\\
                                               \overline{\omega}  & \text{if~}u\leftarrow v, \\
                                               0  & \text{otherwise.}
                                             \end{matrix}\right.
\end{equation}
\begin{lemma}\label{lemma-linind-case1}
$F_1,F_2,\ldots,F_n$ are linearly independent if $\lambda\notin \{0,-1\}$.
\end{lemma}
\begin{proof}
  On the contrary,
  suppose that for some coefficients $\beta_{u}\in\mathbb{C}$, $u\in V$, 
  \begin{equation}\label{eq-lindepend-case1}
    \sum_{u=1}^n \beta_uF_u({\bf x})=0
  \end{equation}
  holds for all ${\bf x}\in \mathbb{C}^d$. 
  Put ${\bf b}:=(\beta_1,\beta_2,\ldots,\beta_n)^{\top}$.
  
  \begin{claim}\label{cl:case1-1}
  $\Big(\big(a(\lambda^2+\lambda)+\lambda^2\big)I+H\Big){\bf b}=0$.
  \end{claim}
  \begin{proof}
  By Eq.\ \eqref{eq-Fa-case1}, 
  substituting ${\bf x}={\bf s}_v$, 
  $v\in V$, into Eq.\ \eqref{eq-lindepend-case1} 
  implies that 
  \begin{equation*}
    \beta_v(a(\lambda^2+\lambda)+\lambda^2)+\sum_{u:~u\sim v}\beta_u + \omega\cdot \sum_{u':~u'\rightarrow v}\beta_{u'} +\overline{\omega}\cdot\sum_{u'':~u''\leftarrow v}\beta_{u''} = 0,
  \end{equation*}
  whence the claim follows.
  \end{proof}

\begin{claim}\label{cl:case1-2}
$(-\lambda I+H){\bf b}=0$.
\end{claim}
\begin{proof} 
Note that $a\neq 0$ and $a\neq -\frac{1}{2}$ by $|\omega|=1$.
Put $G_u(\mathbf{x},\mathbf{y}):=F_u({\bf x}+{\bf y})-F_u({\bf x}-{\bf y})$.
One can see that
\begin{equation*}\label{eq-diffF}
    G_u({\bf x},{\bf y}) = 4a\langle {\bf s}_u,{\bf x}\rangle\langle {\bf s}_u,{\bf y}\rangle+
    2\big(\langle {\bf s}_u,{\bf x}\rangle\langle {\bf s}_u,{\bf y}\rangle^*+
    \langle {\bf s}_u,{\bf x}\rangle^*\langle {\bf s}_u,{\bf y}\rangle\big)+
    2a\langle {\bf s}_u,{\bf y}\rangle^*.
  \end{equation*}

Now set ${\bf x}={\bf 1}$ and ${\bf y}={\bf s}_v$, $v\in V$. Taking into account that 
$\langle {\bf s}_u,{\bf x}\rangle=-1$ by Lemma \ref{lemma-split} and ${\bf 1}\in \mathcal{E}(\lambda)^{\perp}$, as $\lambda$ is a non-main eigenvalue, we obtain
\begin{align}
\nonumber
    G_u({\bf x},{\bf y}) &= -4a\langle {\bf s}_u,{\bf s}_v\rangle-
    2\big(\langle {\bf s}_u,{\bf s}_v\rangle^*+
    \langle {\bf s}_u,{\bf s}_v\rangle\big)+
    2a\langle {\bf s}_u,{\bf s}_v\rangle^*\\
    \label{eq-diff22}
&=(-4a-2)\langle {\bf s}_u,{\bf s}_v\rangle+(2a-2)\langle {\bf s}_u,{\bf s}_v\rangle^*,
\end{align}
and, further,  
\begin{align*}
  F_u(2{\bf y})-4F_u({\bf y})=&\big(4a\langle {\bf s}_u,{\bf y}\rangle^2+4\langle {\bf s}_u,{\bf y}\rangle\langle {\bf s}_u,{\bf y}\rangle^*+2a\langle {\bf s}_u,{\bf y}\rangle^*\big)\\
&-4\big(a\langle {\bf s}_u,{\bf y}\rangle^2+\langle {\bf s}_u,{\bf y}\rangle\langle {\bf s}_u,{\bf y}\rangle^*+a\langle {\bf s}_u,{\bf y}\rangle^*\big)\\
=&-2a\langle {\bf s}_u,{\bf y}\rangle^*=-2a\langle {\bf s}_u,{\bf s}_v\rangle^*,
\end{align*}
which together with Eq.\ \eqref{eq-diff22} implies that  
\begin{eqnarray}
\label{eq-diff12}
  \frac{1}{4a+2}\Big(G_u({\bf x},{\bf y})+\frac{a-1}{-a}\big(F_u(2{\bf y})-4F_u({\bf y})\big)\Big) &=& -\langle {\bf s}_u,{\bf s}_v\rangle \\
  \nonumber
   &=& \left\{\begin{matrix}
                                               -\lambda & \text{if~}u=v, \\
                                               1 & \text{if~}u\sim v, \\
                                               \omega & \text{if~}u\rightarrow v,\\
                                               \overline{\omega}  & \text{if~}u\leftarrow v, \\
                                               0  & \text{otherwise.}
                                             \end{matrix}\right.
\end{eqnarray}

It now follows from Eq.\ \eqref{eq-lindepend-case1} that
  \begin{equation}\label{eq-lindepend1-case1}
    \sum_{u=1}^n \frac{\beta_u}{4a+2}\Big(
F_u({\bf x}+{\bf y})-F_u({\bf x}-{\bf y})+\frac{a-1}{-a}\big(F_u(2{\bf y})-4F_u({\bf y})\big)\Big)=0
  \end{equation}
 holds for all ${\bf x},{\bf y}\in \mathbb{C}^d$,
  which by Eq.\ \eqref{eq-diff12} shows the claim. 
\end{proof}  
  
Finally, it follows from Claims \ref{cl:case1-1} and \ref{cl:case1-2} that
  \begin{equation*}
    \Big(\big(a(\lambda^2+\lambda)+\lambda^2\big)I+H\Big){\bf b}=(-\lambda I+H){\bf b}=0;
  \end{equation*}
  hence $a(\lambda^2+\lambda)+\lambda^2=-\lambda$, i.e., $\lambda\in \{0,-1\}$ (for $\omega\neq -1$ and $a\neq -1$), which is impossible,
  or $\beta_u=0$ for all $u\in V$, which shows the lemma.
\end{proof}

\begin{theorem}\label{th-case1}
$n\leq \frac{3d(d+1)}{2}$ holds.
\end{theorem}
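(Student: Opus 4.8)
The plan is to bound $n$ by the dimension of a concrete vector space of functions $\mathbb{C}^d \to \mathbb{C}$ that contains all of $F_1,\dots,F_n$, using that these functions are already known to be independent. By Lemma \ref{lemma-linind-case1}, which applies precisely because $\lambda\notin\{0,-1\}$, the functions $F_1,F_2,\ldots,F_n$ are linearly independent in the space of complex-valued functions on $\mathbb{C}^d$. Hence $n$ is at most the dimension of \emph{any} vector space containing all of them, and it suffices to identify a small such space.

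Next I would unwind the definition of $F_u$ in coordinates. Writing $\mathbf{x}=(x_1,\ldots,x_d)^{\top}$, note that $(\lambda I-C)^{-1}$ is a fixed Hermitian $d\times d$ matrix (since $C$ is Hermitian and $\lambda$ is real), so by Eq.\ \eqref{eq-inprod} each $\langle \mathbf{s}_u,\mathbf{x}\rangle=\mathbf{s}_u^*(\lambda I-C)^{-1}\mathbf{x}$ is a ($u$-dependent) linear form in $x_1,\ldots,x_d$, while its conjugate $\langle \mathbf{s}_u,\mathbf{x}\rangle^{*}$ is a linear form in $\overline{x_1},\ldots,\overline{x_d}$. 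Consequently the summands $a\langle \mathbf{s}_u,\mathbf{x}\rangle^2$, $\langle \mathbf{s}_u,\mathbf{x}\rangle\langle \mathbf{s}_u,\mathbf{x}\rangle^{*}$, and $a\langle \mathbf{s}_u,\mathbf{x}\rangle^{*}$ show that each $F_u$ is a linear combination of the monomials $x_jx_k$ $(1\le j\le k\le d)$, $x_j\overline{x_k}$ $(1\le j,k\le d)$, and $\overline{x_j}$ $(1\le j\le d)$, and of no others; in particular $F_u$ carries no constant term and no purely holomorphic linear term $x_j$.

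Finally I would count. Let $W$ denote the span of the monomials listed above. Since distinct monomials in $x_j,\overline{x_j}$ are linearly independent as functions on $\mathbb{C}^d\cong\mathbb{R}^{2d}$, we get
\[
\dim W = \binom{d+1}{2} + d^2 + d = \frac{d(d+1)}{2}+d^2+d = \frac{3d(d+1)}{2}.
\]
As every $F_u\in W$ and the $F_u$ are linearly independent, it follows that $n\le \dim W = \frac{3d(d+1)}{2}$, which is the claim.

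This is a dimension count in the spirit of the Delsarte--Goethals--Seidel linear-algebra bound, so I expect no serious obstacle once the correct ambient space is fixed. The only point needing care is the holomorphic/antiholomorphic bookkeeping: one must check that the holomorphic square contributes $\binom{d+1}{2}$ dimensions rather than $d^2$, that the mixed terms genuinely fill a $d^2$-dimensional space, and that $F_u$ has no constant term — this last fact is exactly what keeps the bound at $\tfrac{3d(d+1)}{2}$ and will later leave room for the $-1$ improvement in Theorem \ref{th-case1-improv}.
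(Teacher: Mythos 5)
Your proof is correct and follows essentially the same route as the paper: both arguments place $F_1,\dots,F_n$ inside the span of the monomials $x_jx_k$, $x_j\overline{x_k}$ and the (anti)holomorphic linear terms, compute that this span has dimension $\binom{d+1}{2}+d^2+d=\tfrac{3d(d+1)}{2}$, and conclude via the linear independence from Lemma \ref{lemma-linind-case1}. Your bookkeeping is in fact slightly more careful than the paper's one-line version (which lists the linear monomials as $x_k$ rather than $\overline{x_k}$, though the count is unaffected).
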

\begin{proof}
$F_1,F_2,\ldots,F_n$ lie in the space of polynomials
in $x_kx_{\ell}$, $x_k\overline{x_{\ell}}$ and $x_k$ for ${\bf x}\in \mathbb{C}^d$, which has
dimension $d+{d\choose 2}+d^2+d=3d(d+1)/2$.
By Lemma \ref{lemma-linind-case1}, 
the statement follows.
\end{proof}

Our next result 
improves the bound from Theorem 
\ref{th-case1}; the proof is  similar to that of Theorem 
\ref{th-case1}, but with a much more involved argument.

\begin{theorem}\label{th-case1-improv}
With the above notation,
$n\leq \frac{3d(d+1)}{2}-1$ holds provided that $d\geq 2$.
\end{theorem}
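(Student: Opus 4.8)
The plan is to keep the linearly independent functions $F_1,\dots,F_n$ of Lemma~\ref{lemma-linind-case1} but to show that they fail to span the ambient space $W$ of polynomials in $x_kx_\ell$, $x_k\overline{x_\ell}$ and $\overline{x_k}$ used in the proof of Theorem~\ref{th-case1}, where $\dim W=\frac{3d(d+1)}{2}$. Since the $F_u$ are already independent, exhibiting a single element of $W$ outside their span yields $n=\dim\operatorname{span}\{F_u\}<\dim W$, i.e.\ $n\le\frac{3d(d+1)}{2}-1$. The extra ingredient is the all-one vector: as $\lambda$ is non-main, ${\bf 1}\in\mathcal{E}(\lambda)^{\perp}$, so by Lemma~\ref{lemma-split} its star-complement part ${\bf q}_0$ satisfies $\langle{\bf s}_u,{\bf q}_0\rangle=-1$ for every $u$. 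This produces several distinguished members of $W$ that are constant on all the ${\bf s}_v$; the two I would use are $P({\bf x})=\langle{\bf x},{\bf x}\rangle$, with $P({\bf s}_v)\equiv\lambda$, and $Q({\bf x})=\langle{\bf q}_0,{\bf x}\rangle^2$, with $Q({\bf s}_v)\equiv1$.

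Suppose, for contradiction, that $\{F_u\}$ spans $W$, and expand such a distinguished function $G$, normalised so that $G({\bf s}_v)\equiv\rho$, as $G=\sum_u\beta_uF_u$ with ${\bf b}=(\beta_u)$. I would then feed $G$ through exactly the two operations from the proof of Lemma~\ref{lemma-linind-case1}: the substitution ${\bf x}={\bf s}_v$ of Claim~\ref{cl:case1-1} and the difference combination with ${\bf x}={\bf 1}$, ${\bf y}={\bf s}_v$ of Claim~\ref{cl:case1-2}, but now with right-hand side $G$ instead of $0$. This gives the two inhomogeneous identities $\big((a(\lambda^2+\lambda)+\lambda^2)I+H\big){\bf b}=\rho\,{\bf 1}$ and $(-\lambda I+H){\bf b}=\gamma\,{\bf 1}$, where $\gamma$ is the value of the difference combination on $G$, read off directly from the formula for $G$. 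Subtracting them and using $\lambda\notin\{0,-1\}$ and $a\notin\{0,-\tfrac12,-1\}$ (so that $\mu:=\lambda(\lambda+1)(a+1)\ne0$) forces ${\bf b}=c\,{\bf 1}$ with $c=(\rho-\gamma)/\mu$; substituting back into the second identity yields $c\,H{\bf 1}=(\gamma+c\lambda){\bf 1}$.

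Two cases remain. If ${\bf 1}$ is not an eigenvector of $H$, the last identity forces $c=0$, hence $\gamma=0$ and then $\rho=0$; taking $G=Q$ (for which $\rho=1$) gives the contradiction. The hard part is the \emph{regular} case, where ${\bf 1}$ is an eigenvector, $H{\bf 1}=\theta{\bf 1}$ with $\theta\ne\lambda$: here the identity only says $c(\theta-\lambda)=\gamma$, a single consistency condition that a given $G$ need not violate. To handle it I would run the argument for two independent distinguished functions, say $P$ and $Q$, obtaining $\theta-\lambda=\gamma_P\mu/(\rho_P-\gamma_P)=\gamma_Q\mu/(\rho_Q-\gamma_Q)$; since both expressions must equal the one number $\theta-\lambda$ while $(\rho_P,\gamma_P)\ne(\rho_Q,\gamma_Q)$, a short computation of $\gamma_P,\gamma_Q$ via the difference combination should show that the two required values of $\theta$ differ, giving the contradiction.

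I expect this final incompatibility check to be the main obstacle. The constants $\gamma_P,\gamma_Q$ depend on $a$, $\lambda$ and $\langle{\bf q}_0,{\bf q}_0\rangle$, so a few special parameter values — where the two consistency conditions accidentally coincide — may have to be excluded separately, presumably by bringing in a third distinguished function such as $\langle{\bf x},{\bf q}_0\rangle$ or $|\langle{\bf q}_0,{\bf x}\rangle|^2$ and comparing its condition with the previous ones. Verifying that the resulting finite list of conditions cannot be simultaneously consistent under the standing hypotheses $\lambda\notin\{0,-1\}$, $a\notin\{0,-\tfrac12,-1\}$ is where the bulk of the (elementary but lengthy) computation lies, which is presumably why the statement flags the argument as considerably more involved than that of Theorem~\ref{th-case1}.
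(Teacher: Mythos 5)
Your overall strategy is the paper's: adjoin to $F_1,\dots,F_n$ one more element of the $\frac{3d(d+1)}{2}$-dimensional space $W$, built from the all-one vector via Lemma~\ref{lemma-split}, and show the enlarged family is independent. (The paper's choice is $F({\bf x})=a\langle{\bf 1},{\bf x}\rangle^2+\langle{\bf 1},{\bf x}\rangle\langle{\bf 1},{\bf x}\rangle^*+a\langle{\bf 1},{\bf x}\rangle^*$, i.e.\ ``$F_u$ with ${\bf s}_u$ replaced by ${\bf 1}$''.) Your non-eigenvector case is fine. But in the case $H{\bf 1}=\theta{\bf 1}$ --- which is the main case, and the one you yourself flag as the obstacle --- your plan is not a proof, and I do not believe it can be completed as stated. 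Writing $L=\langle{\bf 1},{\bf 1}\rangle$, your difference combination gives $\gamma_P=\frac{-4}{4a+2}$ and $\gamma_Q=\frac{-4L}{4a+2}$ (both times the $F(2{\bf y})-4F({\bf y})$ term vanishes on $P$ and $Q$), and the two consistency conditions $\theta-\lambda=\gamma\mu/(\rho-\gamma)$ coincide exactly when $\lambda L=1$, a relation not excluded by any standing hypothesis. So the pair $(P,Q)$ can fail outright, and adding a third function only produces another condition of the same kind with no guarantee of joint inconsistency.

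The deeper problem is that you use only finitely many point evaluations of the putative identity $G=\sum_u\beta_uF_u$ (at ${\bf s}_v$, ${\bf 1}\pm{\bf s}_v$, $2{\bf s}_v$), whereas the decisive step in the paper exploits it as an identity of polynomials on all of $\mathbb{C}^d$. After deducing ${\bf b}=\beta{\bf 1}$ with $\beta\in\mathbb{R}\setminus\{0\}$ (which, as you note, is perfectly consistent with the matrix equations in the regular case), the paper isolates the sesquilinear component to get $\langle{\bf 1},{\bf x}\rangle\langle{\bf 1},{\bf y}\rangle^*=\beta\sum_{u}\langle{\bf s}_u,{\bf x}\rangle\langle{\bf s}_u,{\bf y}\rangle^*$ for all ${\bf x},{\bf y}\in\mathbb{C}^d$; the left side forces equality in the Cauchy--Schwarz inequality for the right side, hence all vectors $\big(\langle{\bf s}_u,{\bf x}\rangle\big)_u$ are pairwise proportional, hence ${\bf x}=\gamma{\bf y}$ for all ${\bf x},{\bf y}$, which is only possible when $d=1$. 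This is precisely where the hypothesis $d\geq 2$ enters; your argument never uses $d\geq 2$, which is a further sign that the evaluation data alone cannot carry the contradiction. To repair your proof you need some analogue of this global rank-one versus Gram-form comparison, not more consistency conditions at special points.
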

\begin{proof}
Define a function $F$ by
\begin{equation*}
  F({\bf x})=a\langle {\bf 1},{\bf x}\rangle^2+\langle {\bf 1},{\bf x}\rangle\langle {\bf 1},{\bf x}\rangle^*+a\langle {\bf 1},{\bf x}\rangle^* \quad({\bf x}\in \mathbb{C}^d).
\end{equation*}
We shall prove that $F,F_1,\ldots,F_n$ are linearly independent.
Assume on the contrary that $F$ is written as a linear combination of $F_1,\ldots,F_n$ so that
\begin{equation}\label{eq-case1improv-lindep}
  F({\bf x})=\sum_{u=1}^n \beta_u F_u({\bf x})
\end{equation}
holds for all ${\bf x}\in \mathbb{C}^d$ and some coefficients  $\beta_{u}\in\mathbb{C}$, $u\in V$. Put 
${\bf b}:=(\beta_1,\ldots,\beta_n)^\top$.

\begin{claim}\label{cl:1imp1}
$\Big(\big(a(\lambda^2+\lambda)+\lambda^2\big)I+H\Big){\bf b}=\mathbf{1}$.  
\end{claim}
\begin{proof}
  Substituting ${\bf x}={\bf s}_v$, 
  $v\in V$, into Eq.\ \eqref{eq-case1improv-lindep} 
  gives  
\begin{align*}
1 &=\sum_{u}\beta_uF_u({\bf s}_v)\\
&=(a(\lambda^2+\lambda)+\lambda^2)\beta_v+\sum_{u:~u\sim v}\beta_u + \omega\cdot\sum_{u':~u'\rightarrow v}\beta_{u'} +\overline{\omega}\cdot\sum_{u'':~u''\leftarrow v}\beta_{u''}, \end{align*}
whence the claim follows.
\end{proof}

\begin{claim}\label{cl:1imp2}
$\frac{1}{4a+2}\big((-4a-4)\langle {\bf 1},{\bf 1}\rangle-4a+2\big){\bf 1}=(-\lambda I+H){\bf b}.$
\end{claim}
\begin{proof}
By Eq.\ \eqref{eq-case1improv-lindep}, 
we have the following equality (cf. 
Eq.\ \eqref{eq-lindepend1-case1}): 
\begin{align*}
&\frac{1}{4a+2}\Big(F({\bf x}+{\bf y})-F({\bf x}-{\bf y})+\frac{a-1}{-a}\big(F(2{\bf y})-4F({\bf y})\big)\Big)\\&=
\sum_{u=1}^n \frac{\beta_u}{4a+2}\Big(F_u({\bf x}+{\bf y})-F_u({\bf x}-{\bf y})+\frac{a-1}{-a}\big(F_u(2{\bf y})-4F_u({\bf y})\big)\Big),\end{align*}
which holds for all ${\bf x}, {\bf y}\in \mathbb{C}^d$.
As in Claim \ref{cl:case1-2}, substituting ${\bf x}={\bf 1},{\bf y}={\bf s}_v$, $v\in V$, gives 
\begin{align*}
\frac{1}{4a+2}\big((-4a-4)\langle {\bf 1},{\bf 1}\rangle-4a+2\big)=
-\sum_{u=1}^n \beta_u\langle {\bf s}_u,{\bf s}_v \rangle, \end{align*}
which shows the claim.
\end{proof}

\begin{claim}\label{cl:1imp3}
$\langle {\bf 1},{\bf x}\rangle\langle {\bf 1},{\bf y}\rangle^*=\beta\sum_{u=1}^n \langle {\bf s}_u,{\bf x}\rangle\langle {\bf s}_u,{\bf y}\rangle^*$ 
holds with some nonzero real scalar $\beta$.
\end{claim}
\begin{proof}
It follows from Claims \ref{cl:1imp1} and  \ref{cl:1imp2} that   
\begin{equation*}
 (a+1)(\lambda^2+\lambda){\bf b}=\frac{1}{4a+2}\big((4a+4)\langle {\bf 1},{\bf 1}\rangle+8a\big){\bf 1},
\end{equation*}
which implies ${\bf b}=\beta{\bf 1}$, i.e., $\beta_u=\beta$ for all $u\in V$ for some scalar $\beta$. Note that $\beta\in \mathbb{R}\setminus\{0\}$ as
 $a\neq-1$, $\lambda\not\in\{0,-1\}$ and 
$\langle {\bf 1},{\bf 1}\rangle,a,\lambda\in \mathbb{R}$.
Further, 
we proceed as follows.
\begin{enumerate}
\item[$(i)$] Put $F'({\bf x}):=\frac{1}{2}\big(F(2{\bf x})-2F({\bf x})\big)$ and $F'_u({\bf x}):=\frac{1}{2}\big(F_u(2{\bf x})-2F_u({\bf x})\big)$. 
One can see that 
\begin{eqnarray*}
F'({\bf x})&=&a\langle {\bf 1},{\bf x}\rangle^2+\langle {\bf 1},{\bf x}\rangle\langle {\bf 1},{\bf x}\rangle^*,\\
F'_u({\bf x})&=&a\langle {\bf s}_u,{\bf x}\rangle^2+\langle {\bf s}_u,{\bf x}\rangle\langle {\bf s}_u,{\bf x}\rangle^*.
\end{eqnarray*}
It then follows from Eq.\ \eqref{eq-case1improv-lindep} that 
\[
F'({\bf x})=\beta\sum_{u=1}^n F'_u({\bf x})
\]
holds for all $\mathbf{x}\in \mathbb{C}^d$. 

\item[$(ii)$] Put 
$G({\bf x},{\bf y}):=\frac{1}{2}\big(F'({\bf x}+{\bf y})-F'({\bf x}-{\bf y})\big)$ and  
$G_u({\bf x},{\bf y}):=\frac{1}{2} \big(F'_u({\bf x}+{\bf y})-F'_u({\bf x}-{\bf y})\big)$. One can see 
that 
\begin{eqnarray*}
  G({\bf x},{\bf y}) &=& 2a\langle {\bf 1},{\bf x}\rangle\langle {\bf 1},{\bf y}\rangle+\langle {\bf 1},{\bf x}\rangle\langle {\bf 1},{\bf y}\rangle^*+\langle {\bf 1},{\bf y}\rangle\langle {\bf 1},{\bf x}\rangle^*, \\
  G_u({\bf x},{\bf y}) &=& 2a\langle {\bf s}_u,{\bf x}\rangle\langle {\bf s}_u,{\bf y}\rangle+\langle {\bf s}_u,{\bf x}\rangle\langle {\bf s}_u,{\bf y}\rangle^*+\langle {\bf s}_u,{\bf y}\rangle\langle {\bf s}_u,{\bf x}\rangle^*.
\end{eqnarray*}
It follows from $(i)$ that 
\begin{equation*}
  G({\bf x},{\bf y}) = \beta\sum_{u=1}^n G_u({\bf x},{\bf y})
\end{equation*}
holds for all $\mathbf{x},\mathbf{y}\in \mathbb{C}^d$.

\item[$(iii)$] Finally, 
it follows from $(ii)$ that 
$\frac{1}{2}\big(G({\bf x},{\bf y})+{\bf i}G({\bf x},{\bf i}{\bf y})\big)=\frac{\beta}{2}\sum_{u=1}^n \big(G_u({\bf x},{\bf y})+{\bf i}G_u({\bf x},{\bf i}{\bf y})\big)$ 
holds for all $\mathbf{x},\mathbf{y}\in \mathbb{C}^d$, which in turn 
gives that 
\begin{align*}
\langle {\bf 1},{\bf x}\rangle\langle {\bf 1},{\bf y}\rangle^*=\beta\sum_{u=1}^n \langle {\bf s}_u,{\bf x}\rangle\langle {\bf s}_u,{\bf y}\rangle^*,
\end{align*}
\end{enumerate}
whence the claim follows.
\end{proof}

By Claim \ref{cl:1imp3}, 
we have that, 
for all $\mathbf{x},\mathbf{y}\in \mathbb{C}^d$, 
\begin{align*}
\beta^2\big|\sum_{u=1}^n \langle {\bf s}_u,{\bf x}\rangle\langle {\bf s}_u,{\bf y}\rangle^*\big|^2&
=\beta^2\Big(\sum_{u=1}^n \langle {\bf s}_u,{\bf x}\rangle\langle {\bf s}_u,{\bf y}\rangle^*\Big)
\Big(\sum_{u=1}^n \langle {\bf s}_u,{\bf x}\rangle\langle {\bf s}_u,{\bf y}\rangle^*\Big)^*\\
&=\langle {\bf 1},{\bf x}\rangle \langle {\bf 1},{\bf x}\rangle^* \langle {\bf 1},{\bf y}\rangle \langle {\bf 1},{\bf y}\rangle^* \\
&=\Big(\beta\sum_{u=1}^n \langle {\bf s}_u,{\bf x}\rangle\langle {\bf s}_u,{\bf x}\rangle^*\Big)
\Big(\beta\sum_{u=1}^n \langle {\bf s}_u,{\bf y}\rangle\langle {\bf s}_u,{\bf y}\rangle^*\Big),
\end{align*}
which implies that the Cauchy-Schwarz  inequality is attained.
Thus, for any ${\bf x},{\bf y}\in\mathbb{C}^d$, there exists a scalar  $\gamma=\gamma({\bf x},{\bf y})$ such that
$\langle {\bf s}_u,{\bf x}\rangle=\gamma \langle {\bf s}_u,{\bf y}\rangle$ for all $u\in V$.
Hence $\langle {\bf s}_u,{\bf x}-\gamma {\bf y}\rangle={\bf s}_u^* (\lambda I-C)^{-1}({\bf x}-\gamma {\bf y})=0$ for all $u$. This implies that 
$$
(C-\lambda I)(\lambda I-C)^{-1}({\bf x}-\gamma {\bf y})=0.
$$
Therefore, ${\bf x}=\gamma {\bf y}$ for all ${\bf x},{\bf y}\in \mathbb{C}^d$, which is true only when $d=1$. This contradicts the assumption  $d\geq 2$; thus, 
$F,F_1,\ldots,F_n$ are linearly independent. As in the proof of Theorem \ref{th-case1}, we conclude that $n+1\leq \frac{3d(d+1)}{2}$.
\end{proof}

\subsubsection{Case $\omega=\frac{-1\pm{\bf i}\sqrt{3}}{2}$}

Assume that $\omega=\frac{-1\pm\sqrt{-3}}{2}$.
Define functions $F_1,\ldots,F_n$ by
\begin{equation*}
  F_u({\bf x})=\langle {\bf s}_u,{\bf x}\rangle^2-\langle {\bf s}_u,{\bf x}\rangle+\langle {\bf s}_u,{\bf x}\rangle^*\quad(u\in V,~{\bf x}\in \mathbb{C}^d)
\end{equation*}
and observe that
\begin{equation}\label{eq-Fa-case2}
  F_u({\bf s}_v) = \left\{\begin{matrix}
                                               \lambda^2 & \text{if~}u=v, \\
                                               1 & \text{if~}u\sim v, \\
                                               \omega & \text{if~}u\rightarrow v,\\
                                               \overline{\omega}  & \text{if~}u\leftarrow v, \\
                                               0  & \text{otherwise.}
                                             \end{matrix}\right.
\end{equation}
\begin{lemma}\label{lemma-linind-ia-case2}
$F_1,F_2,\ldots,F_n$ are linearly independent if $\lambda\notin \{0,-1\}$.
\end{lemma}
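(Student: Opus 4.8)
The plan is to mirror the proof of Lemma~\ref{lemma-linind-case1}; the argument is in fact shorter here, since the functions $F_u$ carry no mixed term $\langle{\bf s}_u,{\bf x}\rangle\langle{\bf s}_u,{\bf x}\rangle^*$. Suppose, for contradiction, that $\sum_{u=1}^n\beta_uF_u({\bf x})=0$ for all ${\bf x}\in\mathbb{C}^d$ with scalars $\beta_u$ not all zero, and set ${\bf b}=(\beta_1,\dots,\beta_n)^\top$. As before, I would derive two linear relations on ${\bf b}$ — one from the functions themselves and one from their linear part — and combine them using $\lambda\notin\{0,-1\}$ to conclude ${\bf b}=0$.

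For the first relation I substitute ${\bf x}={\bf s}_v$, $v\in V$, into the dependence. By Eq.~\eqref{eq-Fa-case2} this produces exactly the identity of Claim~\ref{cl:case1-1}, namely $\big(\lambda^2I+H\big){\bf b}=0$. The point making the simple choice of $F_u$ admissible in this case is the scalar relation $\omega^2=\overline{\omega}$ (equivalently $\omega^2+\omega+1=0$), which holds precisely for $\omega=\frac{-1\pm{\bf i}\sqrt3}{2}$ and collapses the quadratic contribution $\langle{\bf s}_u,{\bf s}_v\rangle^2$ in $F_u({\bf s}_v)$ back to the off-diagonal values $1,\omega,\overline{\omega}$ recorded in Eq.~\eqref{eq-Fa-case2}.

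For the second relation I extract the holomorphic-linear part of the dependence. Writing $z=\langle{\bf s}_u,{\bf x}\rangle$, one has $F_u({\bf x})=z^2-z+\overline{z}$, so $F_u({\bf x})$ splits into pieces that are bihomogeneous of bidegrees $(2,0)$, $(1,0)$ and $(0,1)$ in $({\bf x},\overline{{\bf x}})$; hence $\sum_u\beta_uF_u\equiv0$ separates into three identities, and the $(1,0)$-part is $\sum_u\beta_u\langle{\bf s}_u,{\bf x}\rangle\equiv0$. Equivalently, this relation is obtained from the explicit combination
\[
\tfrac14\big(F_u(2{\bf x})-4F_u({\bf x})\big)+\tfrac1{4{\bf i}}\big(F_u(2{\bf i}{\bf x})-4F_u({\bf i}{\bf x})\big)=\langle{\bf s}_u,{\bf x}\rangle .
\]
Putting ${\bf x}={\bf s}_v$ gives $\sum_u\beta_u\langle{\bf s}_u,{\bf s}_v\rangle=0$ for every $v$, that is, the identity of Claim~\ref{cl:case1-2}, $(\lambda I-H){\bf b}=0$, so $H{\bf b}=\lambda{\bf b}$.

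Finally, substituting $H{\bf b}=\lambda{\bf b}$ into $(\lambda^2I+H){\bf b}=0$ yields $\lambda(\lambda+1){\bf b}=0$; since $\lambda\notin\{0,-1\}$ this forces ${\bf b}=0$, contradicting the choice of the $\beta_u$ and establishing the claimed linear independence. I do not expect a real obstacle: in contrast with Lemma~\ref{lemma-linind-case1}, the absence of a mixed term lets me read off the linear part directly, so neither non-mainness of $\lambda$ nor Lemma~\ref{lemma-split} is needed. The only steps requiring care are the scalar arithmetic for $\omega$ underlying Eq.~\eqref{eq-Fa-case2} and the justification that $F_u$ may be separated into its bihomogeneous components, both of which are routine.
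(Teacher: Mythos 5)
Your proof is correct, and it keeps the paper's overall skeleton: derive $(\lambda^2 I+H)\mathbf{b}=0$ by evaluating the dependence at the points $\mathbf{s}_v$, derive $(-\lambda I+H)\mathbf{b}=0$ from the linear part, and subtract, using $\lambda\notin\{0,-1\}$. The way you obtain the second relation, however, is genuinely different. The paper polarizes, forming $F_u(\mathbf{x}+\mathbf{y})-F_u(\mathbf{x}-\mathbf{y})$ and setting $\mathbf{x}=\mathbf{1}$, which requires $\langle\mathbf{s}_u,\mathbf{1}\rangle=-1$ from Lemma~\ref{lemma-split} and hence the non-mainness of $\lambda$ (so that $\mathbf{1}\in\mathcal{E}(\lambda)^{\perp}$). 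You instead isolate the bidegree-$(1,0)$ component of $F_u$ by pure scaling: your identity $\tfrac14\big(F_u(2\mathbf{x})-4F_u(\mathbf{x})\big)+\tfrac{1}{4{\bf i}}\big(F_u(2{\bf i}\mathbf{x})-4F_u({\bf i}\mathbf{x})\big)=\langle\mathbf{s}_u,\mathbf{x}\rangle$ checks out, and evaluating at $\mathbf{x}=\mathbf{s}_v$ yields exactly the paper's second relation. This buys two things: it makes no appeal to $\mathbf{1}$, to Lemma~\ref{lemma-split}, or to the non-main hypothesis, so it proves the lemma (and hence the bound of Theorem~\ref{th-case2}) without assuming $\lambda$ is non-main; and it never produces the extra terms $-2\langle\mathbf{s}_u,\mathbf{y}\rangle+2\langle\mathbf{s}_u,\mathbf{y}\rangle^*$ that the difference $F_u(\mathbf{x}+\mathbf{y})-F_u(\mathbf{x}-\mathbf{y})$ actually contains (the display \eqref{eq-diffF2} as printed drops them, and the paper's route needs a small correction to cancel them). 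The trade-off is that your device is specific to this $F_u$, which has a genuine $(1,0)$-term $-\langle\mathbf{s}_u,\mathbf{x}\rangle$; in the case $\omega\neq\frac{-1\pm{\bf i}\sqrt{3}}{2}$ the corresponding function has no such term, which is why the pairing against $\mathbf{1}$ is the paper's uniform tool across both cases.
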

\begin{proof}
  On the contrary,
  suppose that for some coefficients $\beta_{u}\in\mathbb{C}$, $u\in V$, 
  \begin{equation}\label{eq-lindepend1a-case2}
    \sum_{u=1}^n \beta_uF_u({\bf x})=0
  \end{equation}
  holds for all ${\bf x}\in \mathbb{C}^d$. Put 
${\bf b}:=(\beta_1,\ldots,\beta_n)^\top$. The rest of the proof is similar to that of Lemma \ref{lemma-linind-case1}. First, we show that 
  \begin{equation}\label{eq-case2H1}
      (\lambda^2I+H){\bf b}=0.
  \end{equation}
  Indeed, by Eq.\ \eqref{eq-Fa-case2}, 
  substituting ${\bf x}={\bf s}_v$, 
  $v\in V$, into Eq.\ \eqref{eq-lindepend1a-case2} 
  gives  
  \begin{equation*}
    \beta_v\lambda^2+\sum_{u:~u\sim v}\beta_u + \omega\cdot \sum_{u':~u'\rightarrow v}\beta_{u'} +\overline{\omega}\cdot\sum_{u'':~u''\leftarrow v}\beta_{u''} = 0,
  \end{equation*}
  whence Eq.\ \eqref{eq-case2H1} follows.
  
  Further, by Eq.\ \eqref{eq-lindepend1a-case2}, we have the following equality: 
  \begin{equation}\label{eq-lindepend2a-case2}
    \frac{1}{4}\sum_{u=1}^n \beta_u\big(F_u({\bf x}+{\bf y})-F_u({\bf x}-{\bf y})\big)=0,
  \end{equation}
  which holds for all ${\bf x},{\bf y}\in \mathbb{C}^d$. Observe that
\begin{equation}\label{eq-diffF2}
    F_u({\bf x}+{\bf y})-F_u({\bf x}-{\bf y}) = 4\langle {\bf s}_u,{\bf x}\rangle\langle {\bf s}_u,{\bf y}\rangle.
  \end{equation}

Substituting ${\bf x}={\bf 1}$ and ${\bf y}={\bf s}_v$,   $v\in V$,  
into Eq.\ \eqref{eq-diffF2} and taking into account that 
 $\langle {\bf s}_u,{\bf 1}\rangle=-1$ by Lemma \ref{lemma-split} and ${\bf 1}\in \mathcal{E}(\lambda)^{\perp}$, we obtain
\begin{align*}
    F_u({\bf 1}+{\bf s}_v)-F_u({\bf 1}-{\bf s}_v) &= -4\langle {\bf s}_u,{\bf s}_v\rangle,
  \end{align*}
  which implies $(-\lambda I+H){\bf b}=0$ by Eq.\ \eqref{eq-lindepend2a-case2}.
  Together with Eq.\ \eqref{eq-case2H1}, 
  this yields $(\lambda^2 I+H){\bf b}=(-\lambda I+H){\bf b}=0$, and
   thus $\lambda^2+\lambda=0$, i.e., $\lambda\in \{0,-1\}$, which is impossible,
  or $\beta_u=0$ for all $u\in V$, 
  which shows the lemma.
\end{proof}

\begin{theorem}\label{th-case2}
$n\leq \frac{d(d+5)}{2}$ holds.
\end{theorem}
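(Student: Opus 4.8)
The plan is to mimic the dimension-counting argument used in the proof of Theorem~\ref{th-case1}, now applied to the functions $F_1,\ldots,F_n$ defined for the case $\omega=\frac{-1\pm{\bf i}\sqrt{3}}{2}$. By Lemma~\ref{lemma-linind-ia-case2}, these functions are linearly independent whenever $\lambda\notin\{0,-1\}$, so the bound on $n$ will follow once we identify a vector space containing all the $F_u$ and compute its dimension. The key observation is that the quadratic term $\langle {\bf s}_u,{\bf x}\rangle^2$ expands as a polynomial in the products $x_kx_\ell$, the linear term $-\langle {\bf s}_u,{\bf x}\rangle$ and its conjugate $\langle {\bf s}_u,{\bf x}\rangle^*$ expand as polynomials in $x_k$ and $\overline{x_k}$ respectively. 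Crucially, unlike in Theorem~\ref{th-case1}, \emph{no} mixed term $x_k\overline{x_\ell}$ appears, since the function $F_u$ has no summand of the form $\langle {\bf s}_u,{\bf x}\rangle\langle {\bf s}_u,{\bf x}\rangle^*$.

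First I would write down explicitly the monomial types that occur. The term $\langle {\bf s}_u,{\bf x}\rangle^2$ is a linear combination of $x_k x_\ell$ for $1\le k\le \ell\le d$, which spans a space of dimension $\binom{d}{2}+d=\binom{d+1}{2}$. The term $-\langle {\bf s}_u,{\bf x}\rangle$ contributes monomials $x_k$ ($k=1,\ldots,d$), and the term $\langle {\bf s}_u,{\bf x}\rangle^*$ contributes monomials $\overline{x_k}$ ($k=1,\ldots,d$). Thus each $F_u$ lies in the span of $\{x_kx_\ell : k\le\ell\}\cup\{x_k\}\cup\{\overline{x_k}\}$, a space of dimension
\[
\binom{d+1}{2}+d+d=\frac{d(d+1)}{2}+2d=\frac{d(d+5)}{2}.
\]

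Having identified this ambient space and its dimension, the conclusion is immediate: since Lemma~\ref{lemma-linind-ia-case2} guarantees that $F_1,\ldots,F_n$ are linearly independent (using $\lambda\notin\{0,-1\}$, which holds by the hypothesis of Theorem~\ref{theo-main}), the number $n$ of these functions cannot exceed the dimension of the space in which they live, giving $n\leq \frac{d(d+5)}{2}$.

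I expect no real obstacle here; the argument is a direct parallel of Theorem~\ref{th-case1}, and the main point is simply to recognise that the special algebraic form of $F_u$ at the sixth-root-of-unity value of $\omega$ eliminates the $d^2$ mixed monomials $x_k\overline{x_\ell}$ that were present in the generic case, thereby replacing the bound $\frac{3d(d+1)}{2}$ by the smaller $\frac{d(d+5)}{2}$. The only point requiring a moment's care is to count the quadratic monomials $x_kx_\ell$ correctly as $\binom{d+1}{2}$ (symmetric in $k,\ell$) rather than $d^2$, and to confirm that the linear and conjugate-linear monomials are indeed distinct and contribute $2d$ to the total.
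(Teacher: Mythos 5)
Your proposal is correct and follows exactly the same dimension-counting argument as the paper: the paper likewise observes that $F_1,\ldots,F_n$ lie in the span of the monomials $x_kx_\ell$, $x_k$, $\overline{x_k}$, of dimension $\binom{d+1}{2}+d+d=\frac{d(d+5)}{2}$, and concludes via Lemma~\ref{lemma-linind-ia-case2}. Your explicit count of each monomial type, including the observation that the mixed terms $x_k\overline{x_\ell}$ are absent in this case, matches the paper's reasoning.
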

\begin{proof}
$F_1,F_2,\ldots,F_n$ lie in the space of polynomials
in $x_kx_{\ell}$, $x_k$ and $\overline{x_k}$ for ${\bf x}\in \mathbb{C}^d$, which has
dimension ${d+1\choose 2}+d+d=d(d+5)/2$.
By Lemma \ref{lemma-linind-ia-case2}, we obtain $n\leq \frac{d(d+5)}{2}$.
\end{proof}

\section{Harmonic analysis on the complex unit sphere}\label{sect:harm}

In this section, we first briefly recall some basic theory of 
complex spherical designs and codes \cite{RS} and commutative association schemes \cite{BI}, 
which will be used later to prove Theorem \ref{theo-main}(ii).

\subsection{Spherical codes}
Let the complex Euclidean space $\mathbb{C}^d$ be equipped with the standard inner product ${\bf x}^*{\bf y}$ for ${\bf x},{\bf y}\in \mathbb{C}^d$.
Let $\Omega(d)$ denote the complex unit sphere in $\mathbb{C}^d$.
A {\bf complex spherical code} is a finite nonempty subset of $\Om(d)$.
For a complex spherical code $X$, 
define $A(X)$ by
\begin{align*}
A(X):=\{{\bf x}^*{\bf y} \mid {\bf x},{\bf y}\in X, {\bf x}\neq {\bf y}\},
\end{align*}
and the cardinality of $A(X)$ is called the {\bf degree} of $X$.

Let $\mathbb{N}$ denote the set of nonnegative integers.
A finite subset $\mathcal{S}$ of $\mathbb{N}^2$ is a {\bf lower set}
whenever the following condition is satisfied: if $(k,\ell)\in \mathcal{S}$, 
then so is $(i,j)$ for all $0\leq i\leq k$ and $0\leq j \leq \ell$.
A finite set $X$ in $\Omega(d)$ is an {\bf $\mathcal{S}$-code} if there exists a polynomial $F(x)=\sum_{(k,\ell)\in\mathcal{S}}a_{k,\ell}x^k\bar{x}^\ell$ with real coefficients such that $F(\alpha)=0$
for any $\alpha\in A(X)$ and $F(1)> 0$.

We denote by $\Hom(d,k,\ell)$ the vector space generated by
all homogeneous polynomials over $\mathbb{C}$ of degree $k$ in variables from $\{z_1,\ldots,z_d\}$ and of degree $\ell$ in variables from  $\{\bar{z}_1,\ldots,\bar{z}_d\}$.
The natural action of the  unitary group $U(d)$ on $\Hom(d,k,\ell)$  yields the irreducible decomposition of $\Hom(d,k,\ell)$ as follows:
\begin{align*}
\Hom(d,k,\ell)=\bigoplus_{m=0}^{\min\{k,\ell\}}\Harm(d,k-m,\ell-m),
\end{align*}
where $\Harm(d,k,\ell)\subseteq \Hom(d,k,\ell)$ is the kernel of the Laplace operator
${\displaystyle \sum_{i=1}^d\frac{\partial^2}{\partial z_i\partial\overline{z_i}}}$.
Further, let $m_{k,\ell}^d$ denote 
$\dim \Harm(d,k,\ell)$, that is   
\begin{align}
m_{k,\ell}^d&= \binom{d+k-1}{d-1}\binom{d+\ell-1}{d-1} - \binom{d+k-2}{d-1}\binom{d+\ell-2}{d-1}.\label{eq:dim}
\end{align}
An upper bound on the size of an $\mathcal{S}$-code is given by the following theorem.
\begin{theorem}[{\cite[Theorem~4.2 (ii)]{RS}}]\label{thm:42}
An $\mathcal{S}$-code $X$ in $\Omega(d)$, $d\geq 2$, satisfies
\begin{equation*}
  |X|\leq \sum_{(k,\ell)\in\mathcal{S}} m_{k,\ell}^d. 
\end{equation*}
\end{theorem}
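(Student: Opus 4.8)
The plan is to run the linear-programming (polynomial) bound on the complex unit sphere, using the harmonic decomposition recalled above. Write $X=\{{\bf x}_1,\ldots,{\bf x}_N\}\subseteq\Omega(d)$ with $N=|X|$, and let $F(x)=\sum_{(k,\ell)\in\mathcal{S}}a_{k,\ell}x^k\bar x^\ell$ be the polynomial witnessing that $X$ is an $\mathcal{S}$-code. For each $j$ I define a function $F_j$ on $\Omega(d)$ by $F_j({\bf y}):=F({\bf x}_j^*{\bf y})$. The two facts I must establish are that $F_1,\ldots,F_N$ are linearly independent and that they all lie in a common space of dimension $\sum_{(k,\ell)\in\mathcal{S}}m_{k,\ell}^d$; together these give $N\le\sum_{(k,\ell)\in\mathcal{S}}m_{k,\ell}^d$, as desired.

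Linear independence will follow immediately from evaluating at the points of $X$. For $i\neq j$ we have ${\bf x}_j^*{\bf x}_i\in A(X)$, so $F_j({\bf x}_i)=F({\bf x}_j^*{\bf x}_i)=0$, while $F_j({\bf x}_j)=F(1)>0$. Hence the $N\times N$ evaluation matrix $\big(F_j({\bf x}_i)\big)_{i,j}$ equals $F(1)\cdot I$, which is invertible; consequently a relation $\sum_j c_jF_j\equiv 0$, evaluated at ${\bf x}_i$, forces $c_iF(1)=0$, i.e.\ every $c_i=0$.

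For the dimension bound I regard $F_j$ as a function of ${\bf y}$. Since $({\bf x}_j^*{\bf y})^k(\overline{{\bf x}_j^*{\bf y}})^\ell=({\bf x}_j^*{\bf y})^k({\bf y}^*{\bf x}_j)^\ell$ is homogeneous of degree $k$ in the coordinates of ${\bf y}$ and of degree $\ell$ in their conjugates, each monomial of $F$ contributes an element of $\Hom(d,k,\ell)$, so $F_j\in\bigoplus_{(k,\ell)\in\mathcal{S}}\Hom(d,k,\ell)$. Restricting to $\Omega(d)$, where $\sum_i|y_i|^2=1$, and applying the decomposition $\Hom(d,k,\ell)=\bigoplus_{m=0}^{\min\{k,\ell\}}\Harm(d,k-m,\ell-m)$, every $\Hom(d,k,\ell)|_{\Omega(d)}$ reduces to $\bigoplus_{m}\Harm(d,k-m,\ell-m)|_{\Omega(d)}$. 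Here the lower-set hypothesis on $\mathcal{S}$ is exactly what is needed: whenever $(k,\ell)\in\mathcal{S}$ and $0\le m\le\min\{k,\ell\}$, the shifted pair $(k-m,\ell-m)$ again lies in $\mathcal{S}$. Therefore all the $F_j$ belong to $W:=\bigoplus_{(k,\ell)\in\mathcal{S}}\Harm(d,k,\ell)|_{\Omega(d)}$, whose dimension is $\sum_{(k,\ell)\in\mathcal{S}}m_{k,\ell}^d$ by Eq.~\eqref{eq:dim}.

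The step I expect to require the most care is the final dimension count: I must know that the restriction map to $\Omega(d)$ is injective on each $\Harm(d,k,\ell)$ and keeps the images for distinct bidegrees $(k,\ell)\in\mathcal{S}$ in direct sum, so that no spurious linear relations among harmonics of different bidegrees appear on the sphere. This is precisely where the $U(d)$-module structure underlying the harmonic decomposition, together with the hypothesis $d\ge 2$, enters. Granting that $\dim W=\sum_{(k,\ell)\in\mathcal{S}}m_{k,\ell}^d$, the inequality $N\le\dim W$ obtained above completes the argument.
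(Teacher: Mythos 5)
The paper gives no proof of this statement---it is quoted verbatim from \cite{RS}---so there is nothing internal to compare against; your argument is correct and is the standard one (the complex analogue of the Delsarte--Goethals--Seidel linear-independence proof, which is essentially what \cite{RS} does). All three steps check out: the evaluation matrix is $F(1)I$ and hence invertible; each $F_j$ lies in $\bigoplus_{(k,\ell)\in\mathcal{S}}\Hom(d,k,\ell)$, whose restriction to $\Omega(d)$ lands in $\sum_{(k,\ell)\in\mathcal{S}}\Harm(d,k,\ell)|_{\Omega(d)}$ precisely because $\mathcal{S}$ is a lower set; and the dimension of that span is $\sum_{(k,\ell)\in\mathcal{S}}m_{k,\ell}^d$ because restriction to the sphere is injective on homogeneous polynomials and the spaces $\Harm(d,k,\ell)$ are mutually orthogonal in $L^2(\Omega(d))$, as recorded in Section \ref{sect:harm}.
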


An $\mathcal{S}$-code is {\bf tight} if equality holds in Theorem~\ref{thm:42}.
Tight codes are related to complex spherical designs.
For a finite lower set $\mathcal{T}$, a finite subset $X$ of $\Omega(d)$ is a {\bf complex spherical $\mathcal{T}$-design} if, for every polynomial $f \in \Hom(d,k,\ell)$ such that $(k,\ell)\in\mathcal{T}$, one has
\begin{equation}
\frac{1}{|X|} \sum_{\boldsymbol{z} \in X} f(\boldsymbol{z}) = \int_{\Omega(d)} f(\boldsymbol{z}) \mathrm{d} \boldsymbol{z},
\end{equation}
where $\mathrm{d}\boldsymbol{z}$ is the unique invariant Haar measure on $\Omega(d)$
normalized by $\int_{\Omega(d)} \mathrm{d} \boldsymbol{z}=1$.
As is stated in the following theorem, tight $\mathcal{S}$-codes are complex spherical $\mathcal{S}*\mathcal{S}$-designs,
where
\begin{center}
  $\mathcal{S}*\mathcal{S}:=\{(k+\ell',k'+\ell)\mid (k,\ell), (k',\ell') \in \mathcal{S}\}$.
\end{center}

Note that an $\mathcal{S}*\mathcal{S}$-design $X$ satisfies $|X|\geq \sum_{(k,\ell)\in\mathcal{S}}m_{k,\ell}^d$, and $X$ is {\bf tight} if it attains equality.

\begin{theorem}[{\cite[Theorem 5.4]{RS}}]\label{thm:tight}
Let $X$ be a finite  set in $\Omega(d)$ and $\mathcal{S}$ be a lower set.
Then the following are equivalent:
\begin{enumerate}
\item $X$ is a tight $\mathcal{S}$-code;
\item $X$ is a tight $\mathcal{S}*\mathcal{S}$-design;
\item $X$ is an $\mathcal{S}$-code and an $\mathcal{S} * \mathcal{S}$-design.
\end{enumerate}
\end{theorem}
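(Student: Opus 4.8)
The plan is to translate both conditions into statements about two Hermitian matrices attached to $X$ and then play the resulting rank and positivity constraints against each other. Fix, for each $(k,\ell)$, an orthonormal basis $e_{k,\ell,1},\dots,e_{k,\ell,m_{k,\ell}^d}$ of $\Harm(d,k,\ell)$ with respect to $\int_{\Omega(d)}f\bar g\dd\boldsymbol z$, and let $Z_{k,\ell}(\mathbf x,\mathbf y)=\sum_i e_{k,\ell,i}(\mathbf x)\overline{e_{k,\ell,i}(\mathbf y)}$ be the associated zonal function, which by the addition formula depends only on $\mathbf x^*\mathbf y$ and satisfies $Z_{k,\ell}(\mathbf x,\mathbf x)=m_{k,\ell}^d$. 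For $X\subseteq\Omega(d)$ I introduce the Gram matrices $G_{k,\ell}=(Z_{k,\ell}(\mathbf x,\mathbf y))_{\mathbf x,\mathbf y\in X}\succeq 0$, of rank at most $m_{k,\ell}^d$, and the moment matrix $M=\sum_{\mathbf x\in X}\phi(\mathbf x)\phi(\mathbf x)^*$, where $\phi(\mathbf x)=(e_{k,\ell,i}(\mathbf x))_{(k,\ell)\in\mathcal S,\,i}\in\mathbb C^{N}$ with $N=\sum_{(k,\ell)\in\mathcal S}m_{k,\ell}^d$. The two key dictionary entries are: $X$ is an $\mathcal S*\mathcal S$-design iff $M=|X|I_N$ (the entries of $M$ are the sums $\sum_{\mathbf x}e_{k,\ell,i}\overline{e_{k',\ell',j}}$, whose integrands lie in $\Hom(d,k+\ell',\ell+k')$ with $(k+\ell',\ell+k')\in\mathcal S*\mathcal S$, and conversely products of $\mathcal S$-harmonics span these $\Hom$-spaces because $\mathcal S$ is a lower set); and $X$ is an $\mathcal S$-code iff some nonzero real combination $\sum_{(k,\ell)\in\mathcal S}f_{k,\ell}G_{k,\ell}$ is a scalar multiple of $I$ on $X$, i.e. the corresponding polynomial annihilates $A(X)$.

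From this dictionary the lower bound $|X|\ge N$ for $\mathcal S*\mathcal S$-designs is immediate ($M=|X|I_N$ forces $N=\rank M\le|X|$), while the upper bound $|X|\le N$ for $\mathcal S$-codes is Theorem~\ref{thm:42}. Hence $(3)\Rightarrow(1)$ and $(3)\Rightarrow(2)$ follow by sandwiching $N\le|X|\le N$ and recalling the definitions of tightness. For $(2)\Rightarrow(3)$ I would argue as follows: a tight $\mathcal S*\mathcal S$-design has $|X|=N$ and $M=NI_N$; writing $M=\Phi\Phi^*$ with $\Phi=(\phi(\mathbf x))_{\mathbf x\in X}$ a \emph{square} $N\times N$ matrix, $\Phi\Phi^*=NI_N$ forces $\Phi^*\Phi=NI_N$, i.e. the reproducing-kernel Gram matrix is scalar. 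Thus $K_{\mathcal S}(\mathbf x,\mathbf y):=\sum_{(k,\ell)\in\mathcal S}Z_{k,\ell}(\mathbf x,\mathbf y)$ vanishes for distinct $\mathbf x,\mathbf y\in X$, and since $K_{\mathcal S}$ is a real $\mathcal S$-shaped polynomial in $\mathbf x^*\mathbf y$ with $K_{\mathcal S}(\mathbf x,\mathbf x)=N>0$, it is an annihilator of $A(X)$; hence $X$ is an $\mathcal S$-code.

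The heart is $(1)\Rightarrow(3)$. A tight $\mathcal S$-code has $|X|=N$ and an annihilator with real coefficients $f_{k,\ell}$ for which $(F(\mathbf x^*\mathbf y))_{\mathbf x,\mathbf y\in X}=\sum_{(k,\ell)}f_{k,\ell}G_{k,\ell}=F(1)I_N$. Since $\rank G_{k,\ell}\le m_{k,\ell}^d$ and $\sum_{(k,\ell)}m_{k,\ell}^d=N=\rank(F(1)I_N)$, every inequality must be an equality: each $G_{k,\ell}$ has full rank $m_{k,\ell}^d$ and the column spaces $V_{k,\ell}$ form a direct sum $\mathbb C^N=\bigoplus V_{k,\ell}$. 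Consequently $A_{k,\ell}:=f_{k,\ell}G_{k,\ell}/F(1)$ are the (a priori oblique) projections onto $V_{k,\ell}$ along the complementary sum; being Hermitian, they are in fact mutually orthogonal projections, which also forces $f_{k,\ell}>0$. Writing $G_{k,\ell}=\Phi_{k,\ell}^*\Phi_{k,\ell}$ with $\Phi_{k,\ell}$ of full row rank and feeding the relations $A_{k,\ell}^2=A_{k,\ell}$, $A_{k,\ell}A_{k',\ell'}=0$ back through $\Phi_{k,\ell}$, I obtain that $M$ is block diagonal with diagonal blocks $c_{k,\ell}I_{m_{k,\ell}^d}$. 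Finally the trace of the $(k,\ell)$-block equals $\sum_{\mathbf x\in X}\sum_i|e_{k,\ell,i}(\mathbf x)|^2=\sum_{\mathbf x}Z_{k,\ell}(\mathbf x,\mathbf x)=|X|\,m_{k,\ell}^d$, so $c_{k,\ell}=|X|=N$ for every $(k,\ell)$. Hence $M=NI_N$ and $X$ is an $\mathcal S*\mathcal S$-design, completing $(1)\Rightarrow(3)$.

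The main obstacle, and the step I would set up most carefully, is the rigidity argument turning the single scalar identity $\sum f_{k,\ell}G_{k,\ell}=F(1)I_N$ into the full block structure of $M$: it relies on the exactness of the rank count, on the fact that a Hermitian idempotent is an orthogonal projection, and on transferring identities across the factorisations $G_{k,\ell}=\Phi_{k,\ell}^*\Phi_{k,\ell}$. A second delicate ingredient is the converse half of the design dictionary, namely that $M=|X|I_N$ really does certify the $\mathcal S*\mathcal S$-design property; this needs the lower-set hypothesis on $\mathcal S$ to guarantee that products $e_{k,\ell,i}\,\overline{e_{k',\ell',j}}$ of harmonics indexed by $\mathcal S$ span each $\Hom(d,a,b)$ with $(a,b)\in\mathcal S*\mathcal S$. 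Once these two points are in place, the remaining implications are the short sandwiching and square-matrix arguments above.
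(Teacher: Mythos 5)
The paper gives no proof of this statement---it is imported verbatim from Roy and Suda \cite{RS} as their Theorem~5.4---so there is no internal argument to compare against. Your reconstruction is correct and follows essentially the same route as the source: the characteristic-matrix dictionary (your moment matrix $M$ is $X$ being an $\mathcal{S}*\mathcal{S}$-design iff $H_{k',\ell'}^*H_{k,\ell}=|X|\,\delta_{(k,\ell),(k',\ell')}I$), the Koornwinder addition formula converting the annihilator identity into $\sum_{(k,\ell)\in\mathcal{S}} f_{k,\ell}H_{k,\ell}H_{k,\ell}^*=F(1)I$, the sandwich $\sum_{(k,\ell)\in\mathcal{S}}m_{k,\ell}^d\leq |X|\leq \sum_{(k,\ell)\in\mathcal{S}}m_{k,\ell}^d$ for $(3)\Rightarrow(1),(2)$, and the rank-count plus Hermitian-idempotent rigidity for $(1)\Rightarrow(3)$; the two points you flag as delicate are exactly where the work lies, and both go through (in particular the lower-set hypothesis enters via $\Hom(d,k,\ell)|_{\Omega(d)}=\sum_{m}\Harm(d,k-m,\ell-m)|_{\Omega(d)}$, which makes products of $\mathcal{S}$-indexed harmonics span $\Hom(d,k+\ell',k'+\ell)$ on the sphere).
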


Further, define an inner product for polynomials $f$ and $g$ on $\Om(d)$ as follows:
\[
\ip{f}{g} := \int_{\Om(d)} \overline{f(\boldsymbol{z})}g(\boldsymbol{z}) \dd \boldsymbol{z},
\]
with respect to which $\Harm(d,k,\ell)$ becomes orthogonal to $\Harm(d,k',\ell')$ whenever $(k,\ell) \neq (k',\ell')$.
For each $(k,\ell)\in\mathbb{N}^2$, fix an orthonormal basis $\{e_1,\ldots,e_{m^d_{k,\ell}}\}$ of the space $\Harm(d,k,\ell)$.

For a complex spherical code $X$ in $\Omega(d)$,
we define the characteristic matrix $H_{k,\ell}$ with
rows indexed by $X$, columns indexed by $\{1,2,\ldots,m^d_{k,\ell}\}$,
and entries given by
\begin{align*}
(H_{k,\ell})_{{\bf x},i}=e_i({\bf x})
\end{align*}
for ${\bf x}\in X$ and $i\in\{1,2,\ldots,m^d_{k,\ell}\}$.

For each $(k,\ell) \in \mathbb{N}^2$, we define the {\bf Jacobi polynomial} $g_{k,\ell}^d$ as follows:
\begin{equation}\label{eq-Jacobi}
g_{k,\ell}^d(x) := \frac{m_{k,\ell}^d(d-2)!k!\ell!}{(d+k-2)!(d+\ell-2)!} \sum_{r=0}^{\min\{k,\ell\}} (-1)^r \frac{(d+k+\ell-r-2)!}{r!(k-r)!(\ell-r)!}x^{k-r}\overline{x}^{\ell-r}.
\end{equation}

The essential property of the Jacobi polynomials is the following theorem, also known as Koornwinder's addition theorem.

\begin{theorem}[\cite{K}]\label{thm:addition}
Let $\{e_1,\ldots,e_{m^d_{k,\ell}}\}$ be an orthonormal basis for the space $\Harm(d,k,\ell)$. Then
\[
\sum_{i=1}^{m^d_{k,\ell}} \overline{e_i({\bf x})}e_i({\bf y}) = g_{k,\ell}^d({\bf x}^*{\bf y})
\]
holds for any ${\bf x},{\bf y} \in \Om(d)$.
\end{theorem}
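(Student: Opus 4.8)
The plan is to recognize the left-hand side as the reproducing kernel of $\Harm(d,k,\ell)$ and to exploit the invariance of this kernel under the unitary group $U(d)$. Set
\[
K({\bf x},{\bf y}) := \sum_{i=1}^{m^d_{k,\ell}} \overline{e_i({\bf x})}\,e_i({\bf y}).
\]
First I would check that $K$ does not depend on the chosen orthonormal basis of $\Harm(d,k,\ell)$: passing to another orthonormal basis amounts to multiplying the row $(e_1,\ldots,e_{m^d_{k,\ell}})$ by a unitary matrix, which leaves the Hermitian expression $\sum_i \overline{e_i({\bf x})}e_i({\bf y})$ unchanged. Equivalently, $K(\cdot,{\bf y})$ is the reproducing kernel of the finite-dimensional inner product space $(\Harm(d,k,\ell),\ip{\cdot}{\cdot})$ --- the unique element reproducing point evaluation at ${\bf y}$ --- which makes basis-independence manifest.

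Next I would prove that $K$ is $U(d)$-invariant, that is $K(U{\bf x},U{\bf y})=K({\bf x},{\bf y})$ for all $U\in U(d)$. Since $\Harm(d,k,\ell)$ is a $U(d)$-invariant subspace and the action $f\mapsto f\circ U^{-1}$ is unitary for $\ip{\cdot}{\cdot}$, the family $\{e_i\circ U^{-1}\}$ is again an orthonormal basis, and invariance follows from the previous step. The diagonal action of $U(d)$ on pairs of points of $\Om(d)$ has ${\bf x}^*{\bf y}$ as a complete invariant: choosing $U$ with $U{\bf x}={\bf e}_1$ and using the residual $U(d-1)$-action on ${\bf e}_1^\perp$, one brings any pair into the canonical form $({\bf e}_1,\,t{\bf e}_1+\sqrt{1-|t|^2}\,{\bf e}_2)$ with $t={\bf x}^*{\bf y}$, which depends only on $t$. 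Hence there is a function $\phi$ with $K({\bf x},{\bf y})=\phi({\bf x}^*{\bf y})$.

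It remains to identify $\phi$ with $g_{k,\ell}^d$. As a function of ${\bf y}$ the kernel $K(\cdot,{\bf y})$ lies in $\Harm(d,k,\ell)$, so $\phi(t)$ is a polynomial of degree $k$ in $t$ and degree $\ell$ in $\overline{t}$, necessarily of the shape $\phi(t)=\sum_{r=0}^{\min\{k,\ell\}} c_r\, t^{k-r}\overline{t}^{\,\ell-r}$. The key structural input, already recorded in the irreducible decomposition of $\Hom(d,k,\ell)$ recalled above, is that $\Harm(d,k,\ell)$ is an irreducible $U(d)$-module; by Schur's lemma the subspace of zonal (that is, $U(d-1)$-fixed) vectors with a prescribed pole is one-dimensional, so the harmonic, bidegree-$(k,\ell)$, zonal function $\phi$ is determined up to a scalar. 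I would fix that scalar by a normalization computation: putting ${\bf x}={\bf y}$ gives $\phi(1)=K({\bf x},{\bf x})=\sum_i|e_i({\bf x})|^2$, and integrating over $\Om(d)$ with $\ip{e_i}{e_i}=1$ and $\int_{\Om(d)}\dd{\bf z}=1$ yields $\phi(1)=m^d_{k,\ell}$, which equals the value $g_{k,\ell}^d(1)$ forced by the definition \eqref{eq-Jacobi}. Therefore $\phi=g_{k,\ell}^d$.

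The main obstacle is precisely this last identification. On the conceptual route it rests on the irreducibility of $\Harm(d,k,\ell)$, which guarantees one-dimensionality of the zonal subspace; alternatively one can bypass representation theory by writing out the Laplace equation $\sum_i \partial^2/(\partial z_i\partial\overline{z_i})$ for the homogeneous extension of $\phi({\bf x}^*{\bf y})$, which turns into a two-term recurrence for the coefficients $c_r$ with a unique (up to scale) solution matching the ratios in \eqref{eq-Jacobi}. Either way, the delicate --- though ultimately routine --- part is to verify coefficient-by-coefficient that the normalized zonal polynomial coincides with the explicit Jacobi polynomial \eqref{eq-Jacobi}, including the combinatorial identity underlying $g_{k,\ell}^d(1)=m^d_{k,\ell}$.
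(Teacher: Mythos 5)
The paper does not actually prove this statement---it is imported verbatim from Koornwinder \cite{K}---so there is no internal proof to compare against, and your job here was really to reconstruct a proof of a cited classical result. Your outline is the standard one and is essentially sound: basis-independence of the kernel $K({\bf x},{\bf y})=\sum_i\overline{e_i({\bf x})}e_i({\bf y})$, its $U(d)$-invariance, two-point homogeneity of $\Om(d)$ reducing $K$ to a function $\phi$ of ${\bf x}^*{\bf y}$, and identification of $\phi$ with $g^d_{k,\ell}$ via uniqueness of the zonal harmonic plus the normalization $\phi(1)=m^d_{k,\ell}$. Two caveats. First, Schur's lemma alone does not give one-dimensionality of the $U(d\mm1)$-fixed subspace of $\Harm(d,k,\ell)$; you need either the Gelfand-pair (multiplicity-one) property of the polynomial functions on $\Om(d)$, or the elementary dimension count: the $U(d\mm1)$-invariants of $\Hom(d,k,\ell)$ with pole ${\bf e}_1$ are spanned by $y_1^{k-r}\overline{y_1}^{\ell-r}\bigl(|y_2|^2+\cdots+|y_d|^2\bigr)^r$ for $0\le r\le\min\{k,\ell\}$, and the Laplace operator maps this $(\min\{k,\ell\}+1)$-dimensional space onto the corresponding $\min\{k,\ell\}$-dimensional space of bidegree $(k\mm1,\ell\mm1)$, so its kernel there is one-dimensional. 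Second, the ``routine'' coefficient matching you defer is precisely what Section \ref{sect:Cpq} of the paper carries out, in the more general indefinite setting $\mathbb{C}^{p,q}$: Lemma \ref{lem:add-kl} shows that $B({\bf x},\cdot)^k\overline{B({\bf x},\cdot)}^\ell$ represents evaluation at ${\bf x}$, Lemma \ref{lem:add-rec} yields the two-term recurrence for the projection coefficients $a_i$ in Eq.\ \eqref{eq-ai}, and the explicit expansion of $r_{\bf x}$ preceding Theorem \ref{thm:add-form} reproduces exactly the coefficients of Eq.\ \eqref{eq-Jacobi} (including $g^d_{k,\ell}(1)=m^d_{k,\ell}$ via a Vandermonde-type identity). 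Specializing that computation to $q=0$ closes your remaining gap; your conceptual route and the paper's computational one are two faces of the same argument, the latter having the advantage of producing the coefficients rather than requiring them to be verified against a separately given formula.
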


Let $X$ be a complex $\mathcal{S}$-code with $A(X)=\{\alpha_1,\ldots,\alpha_s\}$, and set $\alpha_0:=1$.
For $0\leq i\leq s$, define a binary relation $R_i$ as the set of pairs
$({\bf x},{\bf y})\in X\times X$ such that ${\bf x}^*{\bf y}=\alpha_i$.
Tight codes possess a structure of commutative association schemes provided that $s=|\mathcal{S}|-1$ holds.

\begin{theorem}[{\cite[Theorem 6.1]{RS}}]\label{thm:S1}
Let $X$ be a tight $\mathcal{S}$-code with degree $s=|\mathcal{S}|-1$ for a lower set $\mathcal{S}$.
Then $X$ with the binary relations $R_0,R_1,\ldots,R_s$ defined as above is a commutative association scheme.
Moreover, its primitive idempotents are $\frac{1}{|X|}H_{k,\ell}H_{k,\ell}^*$, $(k,\ell)\in \mathcal{S}$.
\end{theorem}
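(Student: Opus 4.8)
The plan is to realize the $|\mathcal{S}|$ Hermitian matrices $E_{k,\ell} := \tfrac{1}{|X|} H_{k,\ell} H_{k,\ell}^*$ as a complete system of orthogonal idempotents spanning the span $\mathcal{A}$ of the relation matrices, and then read off the association-scheme axioms from the idempotent structure. First I would introduce the $0$--$1$ adjacency matrices $A_0 = I, A_1, \ldots, A_s$ of $R_0,\ldots,R_s$; since the relations partition $X\times X$, these are linearly independent, so $\mathcal{A}:=\spn_{\mathbb{C}}\{A_0,\ldots,A_s\}$ has dimension $s+1=|\mathcal{S}|$ by the degree hypothesis. By Koornwinder's addition theorem (Theorem~\ref{thm:addition}), $(E_{k,\ell})_{{\bf x},{\bf y}} = \tfrac{1}{|X|}\overline{g_{k,\ell}^d({\bf x}^*{\bf y})}$ depends only on the relation $R_i$ containing $({\bf x},{\bf y})$, so each $E_{k,\ell}$ equals $\tfrac{1}{|X|}\sum_{i=0}^{s}\overline{g_{k,\ell}^d(\alpha_i)}\,A_i\in\mathcal{A}$.

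The next step is to extract orthonormality of the characteristic matrices from the design property. Since $X$ is a tight $\mathcal{S}$-code, Theorem~\ref{thm:tight} makes it an $\mathcal{S}*\mathcal{S}$-design. For basis elements $e_i\in\Harm(d,k,\ell)$ and $e_j\in\Harm(d,k',\ell')$ with $(k,\ell),(k',\ell')\in\mathcal{S}$, the product $\overline{e_i}\,e_j$ lies in $\Hom(d,\ell+k',k+\ell')$ with $(\ell+k',k+\ell')\in\mathcal{S}*\mathcal{S}$; hence the design condition applies and gives $\tfrac{1}{|X|}\sum_{{\bf z}\in X}\overline{e_i({\bf z})}e_j({\bf z}) = \ip{e_i}{e_j}$. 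As the chosen bases are orthonormal and distinct $\Harm$-spaces are mutually orthogonal, this yields $\tfrac{1}{|X|}H_{k,\ell}^* H_{k',\ell'} = \delta_{(k,\ell),(k',\ell')}\,I$. From this the $E_{k,\ell}$ are immediately Hermitian, pairwise orthogonal idempotents (so $E_{k,\ell}E_{k',\ell'}=\delta_{(k,\ell),(k',\ell')}E_{k,\ell}$), each of rank $\tr E_{k,\ell}=m_{k,\ell}^d$.

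I would then invoke tightness itself. Because $\sum_{(k,\ell)\in\mathcal{S}} m_{k,\ell}^d = |X|$, the matrix $\sum_{(k,\ell)\in\mathcal{S}} E_{k,\ell}$ is a Hermitian idempotent of trace $|X|$, hence equals $I$. The $|\mathcal{S}|$ nonzero orthogonal idempotents $E_{k,\ell}$ are linearly independent, so they form a basis of the $(s+1)$-dimensional space $\mathcal{A}$; consequently $\mathcal{A}$ is closed under multiplication and commutative, being spanned by commuting idempotents. In particular $A_iA_j=A_jA_i\in\mathcal{A}$, so $A_iA_j=\sum_k p_{ij}^k A_k$, where the $p_{ij}^k$ are forced to be the nonnegative integer intersection numbers by counting the elements ${\bf z}$ with $({\bf x},{\bf z})\in R_i$, $({\bf z},{\bf y})\in R_j$. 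Together with $A_0=I$, $\sum_i A_i=J$ (the all-ones matrix), and closure under transpose (which holds because $A(X)$ is closed under complex conjugation, so $A_i^{\top}=A_{i'}$ with $\alpha_{i'}=\overline{\alpha_i}$), this is precisely the data of a commutative association scheme; and since the $E_{k,\ell}$ are $\dim\mathcal{A}$ orthogonal idempotents summing to $I$, they are exactly its primitive idempotents.

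The step I expect to be the main obstacle is the bidegree bookkeeping in the second paragraph: verifying that the bidegree $(\ell+k',k+\ell')$ of $\overline{e_i}\,e_j$ indeed lands in $\mathcal{S}*\mathcal{S}$ for all relevant $(k,\ell),(k',\ell')\in\mathcal{S}$, so that the $\mathcal{S}*\mathcal{S}$-design property can be applied uniformly to deduce the orthonormality relation $\tfrac{1}{|X|}H_{k,\ell}^* H_{k',\ell'}=\delta_{(k,\ell),(k',\ell')}I$. Once this relation and the trace--tightness count are in hand, everything else is formal linear algebra.
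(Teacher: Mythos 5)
The paper does not prove this statement at all --- it is quoted verbatim from \cite[Theorem~6.1]{RS} --- so there is no in-paper argument to compare against; judged on its own, your proof is correct and follows the standard route (addition formula to place the $E_{k,\ell}:=\frac{1}{|X|}H_{k,\ell}H_{k,\ell}^*$ in the span of the $A_i$, the $\mathcal{S}*\mathcal{S}$-design property via Theorem~\ref{thm:tight} to get $H_{k,\ell}^*H_{k',\ell'}=|X|\,\delta_{(k,\ell),(k',\ell')}I$, and the trace/tightness count to conclude $\sum E_{k,\ell}=I$ and match dimensions). The bidegree check you flag as the main obstacle does go through: $\overline{e_i}e_j\in\Hom(d,\ell+k',k+\ell')$ and $(\ell+k',k+\ell')=(k'+\ell,k+\ell')\in\mathcal{S}*\mathcal{S}$ by taking the pair $\big((k',\ell'),(k,\ell)\big)$ in the definition of $\mathcal{S}*\mathcal{S}$, so no gap remains.
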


We review the theory of association schemes in the next section.

\subsection{Association schemes}\label{ssect:as}

Let $X$ be a finite set and let $R_i$, $i\in\{0,1,\ldots,s\}$, be nonempty binary relations on $X$.
The {\bf adjacency matrix} $A_i$ of the relation $R_i$ is defined to be the $(0,1)$-matrix
with rows and columns indexed by $X$ such that $(A_i)_{xy}=1$ if $(x,y)\in R_i$ and $(A_i)_{xy}=0$ otherwise.
A pair $(X,\{R_i\}_{i=0}^s)$ is a {\bf commutative association scheme},
or simply an {\bf association scheme} if the following five conditions hold:
\begin{enumerate}
\item $A_0$ is the identity matrix.
\item $\sum_{i=0}^sA_i=J$, where $J$ is the all-one matrix.
\item for each $i \in\{0,1,\ldots,s\}$, 
there exists $i' \in\{0,1,\ldots,s\}$ such that $A_i^\top=A_{i'}$.
\item $A_iA_j=\sum_{\ell=0}^sp_{i,j}^{\ell}A_{\ell}$ for all $i,j\in\{0,1,\ldots,s\}$ (note that $p_{i,j}^{\ell}\in \mathbb{N}$).
\item $A_iA_j=A_jA_i$ for all $i,j$.
\end{enumerate}

The matrix algebra $\mathcal{A}$ generated by  $A_0,A_1,\ldots,A_s$ over $\mathbb{C}$ is called
the {\bf Bose-Mesner algebra} of the association scheme.
Since the  Bose-Mesner algebra is semisimple and commutative,
it has a unique set of primitive idempotents, which are denoted by $E_0:=\frac{1}{|X|}J,E_1,\ldots,E_s$ (see \cite[Theorem 3.1]{BI}). 
Since $\{E_0^\top,E_1^\top,\ldots,E_s^\top\}$ also forms the set of primitive idempotents,
for each $\ell\in \{0,1,\ldots,s\}$, 
there exists $\hat{\ell}\in \{0,1,\ldots,s\}$ such that $E_{\hat{\ell}}=E_{\ell}^\top$.
Note that $\hat{0}=0$.

Both sets of matrices $\{A_0,A_1,\ldots,A_s\}$ and $\{E_0,E_1,\ldots,E_s\}$ are bases for the Bose-Mesner algebra.
Therefore, there exist change of basis matrices $P$ and $Q$ defined as follows:
\[
A_i=\sum_{j=0}^sP_{ji}E_j,\quad E_j=\frac{1}{|X|}\sum_{i=0}^sQ_{ij}A_i.
\]
Then we have $P=\frac{1}{|X|}Q^{-1}$.
We call $P$ and $Q$ the {\bf (first) eigenmatrix} and the {\bf second eigenmatrix} of the association scheme, respectively.
For each $i\in\{0,1,\ldots,s\}$, $k_i:=P_{i0}$ and $m_i:=Q_{i0}$ are called
the $i$-{\bf th valency} and {\bf multiplicity}, respectively. Note that all 
$k_i, m_i\in \mathbb{N}$.

\subsection{The proof of Theorem \ref{theo-main}(ii)}\label{sect:codes}

\begin{lemma}\label{lem:ub}
Assume that $d\geq 2$.
Let $X$ be a subset of $\Omega(d)$ 
with $n=|X|$ and such that 
$$
\{x+{\bf i}y,x-{\bf i}y\} \subseteq A(X)\subseteq \{a,b,x+{\bf i}y,x-{\bf i}y\},
$$
where $a,b,x,y$ are real numbers with $a\neq b$ and $y\neq 0$. 
Then the following holds.
\begin{enumerate}
\item If $|A(X)|=2$, then $|X|\leq 2d+1$.
\item If $|A(X)|=3$, then $|X|\leq d^2+2d$.
\item If $|A(X)|=4$, then $|X|\leq \frac{1}{2}d(3d+5)-1$.
\end{enumerate}
\end{lemma}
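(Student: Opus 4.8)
The plan is to realise $X$ as an $\mathcal{S}$-code for a carefully chosen small lower set $\mathcal{S}$ in each case, and then read off the bound from Theorem~\ref{thm:42}, evaluating $\sum_{(k,\ell)\in\mathcal{S}}m_{k,\ell}^d$ by means of Eq.~\eqref{eq:dim}. Two observations organise the choice of the annihilating polynomial $F(z)=\sum_{(k,\ell)\in\mathcal{S}}a_{k,\ell}z^k\overline{z}^\ell$: first, $A(X)$ is closed under complex conjugation, since ${\bf u}^*{\bf v}=\overline{{\bf v}^*{\bf u}}$; secondly, if $F$ has real coefficients then $F(x-{\bf i}y)=\overline{F(x+{\bf i}y)}$, so that killing the conjugate pair $x\pm{\bf i}y$ is a single complex condition. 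I shall use the values $m_{0,0}^d=1$, $m_{1,0}^d=m_{0,1}^d=d$, $m_{1,1}^d=d^2-1$ and $m_{2,0}^d=m_{0,2}^d=\tfrac12 d(d+1)$, valid for $d\ge2$. Finally, since the members of $A(X)$ are inner products of distinct unit vectors, their real parts are $<1$ and none equals $1$; this is what makes $F(1)>0$ attainable.

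For (1) take $\mathcal{S}=\{(0,0),(1,0),(0,1)\}$ and $F(z)=z+\overline{z}-2x$: it vanishes on $A(X)=\{x\pm{\bf i}y\}$ and $F(1)=2(1-x)>0$, so $X$ is an $\mathcal{S}$-code and Theorem~\ref{thm:42} gives $|X|\le 1+d+d=2d+1$. For (2), with $A(X)=\{c,x\pm{\bf i}y\}$ and $c\in\{a,b\}$, take $\mathcal{S}=\{(0,0),(1,0),(0,1),(1,1)\}$ and seek $F$ in the real span of $z+\overline{z}-2x$ and $z\overline{z}-(x^2+y^2)$, both of which already vanish on the conjugate pair; as $c=x$ and $c^2=x^2+y^2$ cannot both hold (they force $y=0$), a nonzero real combination annihilating $c$ exists, and after rescaling $F(1)>0$. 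Hence $|X|\le 1+d+d+(d^2-1)=d^2+2d$.

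For (3) all four values occur. I would take the deliberately asymmetric lower set $\mathcal{S}=\{(0,0),(1,0),(0,1),(1,1),(2,0)\}$ together with an annihilator of the shape $F(z)=(z+\overline{z}-2x)(\beta z+\gamma)+\mu\bigl(z\overline{z}-(x^2+y^2)\bigr)$, which is supported on $\mathcal{S}$, has real coefficients, and vanishes on the conjugate pair for all real $\beta,\gamma,\mu$; the conditions $F(a)=F(b)=0$ are two linear equations in $\beta,\gamma,\mu$, hence solvable with $(\beta,\gamma,\mu)\ne0$, and one arranges $F(1)>0$. Theorem~\ref{thm:42} then yields $|X|\le 1+d+d+(d^2-1)+\tfrac12 d(d+1)=\tfrac12 d(3d+5)$. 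To gain the final $-1$ for $d\ge2$ I would show equality is impossible. If $|X|=\tfrac12 d(3d+5)$, then $X$ is a \emph{tight} $\mathcal{S}$-code of degree $s=4=|\mathcal{S}|-1$, so by Theorem~\ref{thm:S1} it carries a commutative association scheme whose primitive idempotents are exactly $E_{(k,\ell)}:=\tfrac1{|X|}H_{k,\ell}H_{k,\ell}^*$, $(k,\ell)\in\mathcal{S}$. Using the addition theorem (Theorem~\ref{thm:addition}) and $E_{(k,\ell)}^\top=\overline{E_{(k,\ell)}}$, one finds that $E_{(2,0)}^\top$ is the idempotent attached to the harmonic space $\Harm(d,0,2)$; since the transpose of a primitive idempotent is again a primitive idempotent, $\mathcal{S}$ would have to be invariant under $(k,\ell)\mapsto(\ell,k)$. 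But $(2,0)\in\mathcal{S}$ while $(0,2)\notin\mathcal{S}$, and for $d\ge2$ the space $\Harm(d,0,2)$ has positive dimension $\tfrac12 d(d+1)$ and is none of the five spaces $\Harm(d,k,\ell)$, $(k,\ell)\in\mathcal{S}$, so $E_{(2,0)}^\top$ is a primitive idempotent outside the list — a contradiction. Hence $|X|\le\tfrac12 d(3d+5)-1$.

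The heart of the argument, and its main obstacle, is precisely this last step: the plain $\mathcal{S}$-code bound only gives $\tfrac12 d(3d+5)$, and extracting the extra $-1$ forces one to exploit the asymmetry of $\mathcal{S}$ through the rigidity of tight codes (Theorems~\ref{thm:tight} and~\ref{thm:S1}), namely that the idempotent set of a scheme is transpose-closed; this mirrors the role played by the additional function in Theorem~\ref{th-case1-improv}. It is also exactly where $d\ge2$ is needed: for $d=1$ the harmonic framework degenerates, Theorem~\ref{thm:42} does not apply, no contradiction arises, and the value $\tfrac12 d(3d+5)$ is attained, consistent with the four-vertex example of the introduction. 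A secondary, more technical point is to guarantee $F(1)>0$ in every parameter regime; for the degenerate values of $a,b,x,y$ at which the natural annihilator happens to vanish at $1$, the resulting algebraic coincidences among the inner products either are absorbed by swapping $(1,1)$ or $(2,0)$ for another index of equal contribution, or else confine $X$ to a low-dimensional subconfiguration in which the asserted bounds are immediate.
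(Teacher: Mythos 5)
Your set-up for the first half of each part is essentially the paper's: realise $X$ as an $\mathcal{S}$-code for the lower set $\mathcal{S}=\{(0,0),(1,0),(0,1),(1,1),(2,0)\}$ (resp.\ its truncations) and apply Theorem~\ref{thm:42}; your dimension counts are correct. Two remarks there. First, in (2) and (3) you assert that ``one arranges $F(1)>0$'' after solving the homogeneous linear system for the coefficients, but a priori the whole solution space of $\{F(a)=F(b)=0\}$ could lie in the hyperplane $F(1)=0$; the paper closes this by exhibiting explicit coefficients with $F(1)=2(1-a)(1-b)dy^2>0$, which is automatic from $a,b<1$ and $y\neq0$. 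Your three-parameter family $(z+\overline z-2x)(\beta z+\gamma)+\mu(z\overline z-x^2-y^2)$ does in fact exhaust the space of real polynomials supported on $\mathcal{S}$ vanishing at $x\pm{\bf i}y$, so the claim is true, but it needs the explicit witness (or an equivalent nondegeneracy argument), not just a dimension count.

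The genuine gap is the final $-1$ in (3). Your contradiction does not exist: the fact that $E_{(2,0)}^\top=\frac{1}{|X|}H_{0,2}H_{0,2}^*$ is ``attached to $\Harm(d,0,2)$'' while $(0,2)\notin\mathcal{S}$ does not place it outside the set of primitive idempotents. The primitive idempotents are five specific $n\times n$ matrices, and matrices carrying different labels can coincide; by the addition theorem, $E_{(0,2)}$ and $E_{(2,0)}$ are given by evaluating $g_{0,2}(\alpha)=c\overline{\alpha}^2$ and $g_{2,0}(\alpha)=c\alpha^2$ on $A(X)\cup\{1\}$, and these agree as matrices precisely when $\alpha^2\in\mathbb{R}$ for all $\alpha\in A(X)$, i.e.\ when $x=0$. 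So transpose-closedness of the idempotent set yields only the constraint $E_{(2,0)}^\top=E_{(2,0)}$, hence $x=0$ --- which is exactly the \emph{first} step of the paper's equality analysis, not its conclusion. A putative tight code with $x=0$ survives your argument entirely. The paper must then grind much further: the row-sum relations from $QP=|X|I$ pin down $a,b,y$ in terms of $d$; integrality of the valency $k_1$ forces $6d+10=4m^2$; and integrality of the intersection numbers $p_{1,1}^1$ and $p_{1,1}^2$ eliminates the surviving cases $m=3,5,9$. None of this arithmetic is avoidable by your route, and it is where the $d\geq2$ hypothesis and the real content of the $-1$ improvement live.
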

\begin{proof}
(1) and (2) are shown in \cite{NS} and \cite{NS2}, respectively.

(3) First we show that $n\leq \frac{d(3d+5)}{2}$.
Let $\mathcal{S}:=\{(0,0),(1,0),(2,0),(0,1),(1,1)\}$.
Define a polynomial $F(z)$ in $z$ and $\bar{z}$ by
$
F(z)=\sum_{(k,\ell)\in\mathcal{S}}a_{k,\ell}g_{k,\ell}(z),
$
(with $g_{k,\ell}=g^d_{k,\ell}$, see Eq.\ \eqref{eq-Jacobi}), where
\begin{align*}
a_{0,0}&=y^2 (2 a b d+1)+(a-x) (x-b), \\
a_{1,0}&=x (a-x) (x-b)-y^2 (a+b+x), \\
a_{0,1}&=x (a-x) (b-x)-y^2 (a+b-x), \\
a_{1,1}&=\frac{(a-x) (x-b)+y^2}{d+1}, \\
a_{2,0}&=\frac{2 \left((a-x) (b-x)+y^2\right)}{d+1}.
\end{align*}

Then $F(\alpha)=0$ for any $\alpha\in A(X)$ and $F(1)=2 (1-a) (1-b) d y^2>1$ by $a< 1$ and $b<1$ (recall that $X\subseteq \Omega(d)$),
and $F(z)\in\text{Span}\{1,z,\bar{z},z^2,z\bar{z}\}$.
Therefore  $X$ is an $\mathcal{S}$-code, and by Theorem~\ref{thm:42}
, one has
\begin{equation}\label{ineq:code}
n=|X|\leq \sum_{(k,\ell)\in \mathcal{S}}m_{k,\ell}^d=\frac{d(3d+5)}{2}.
\end{equation}

Further, we show that there are no complex codes attaining the upper bound \eqref{ineq:code}.
Assume on the contrary that there exists such a code $X$.
Then $X$ carries a structure of commutative association scheme by Theorem~\ref{thm:S1}. More precisely, 
put $$\alpha_0=1,\alpha_1=a,\alpha_2=b,\alpha_3=x+{\bf i}y,\alpha_4=x-{\bf i}y,$$
and define
$$R_i=\{({\bf x},{\bf y})\mid {\bf x},{\bf y}\in X, {\bf x}^*{\bf y}=\alpha_i\}$$
for $i=0,1,\ldots,4$.
Then a pair $(X,\{R_i\}_{i=0}^4)$ forms a commutative association scheme with the second eigenmatrix $Q$ given as
\begin{align*}
Q=\begin{pmatrix}
1 & g_{1,0}(\alpha_0) & g_{0,1}(\alpha_0) & g_{1,1}(\alpha_0) & g_{2,0}(\alpha_0) \\
1 & g_{1,0}(\alpha_1) & g_{0,1}(\alpha_1) & g_{1,1}(\alpha_1) & g_{2,0}(\alpha_1) \\
1 & g_{1,0}(\alpha_2) & g_{0,1}(\alpha_2) & g_{1,1}(\alpha_2) & g_{2,0}(\alpha_2) \\
1 & g_{1,0}(\alpha_3) & g_{0,1}(\alpha_3) & g_{1,1}(\alpha_3) & g_{2,0}(\alpha_3) \\
1 & g_{1,0}(\alpha_4) & g_{0,1}(\alpha_4) & g_{1,1}(\alpha_4) & g_{2,0}(\alpha_4) \\
\end{pmatrix}.
\end{align*}

Since $g_{1,0}(\alpha_i)=\overline{g_{0,1}(\alpha_i)}$ for any $i$ and $g_{1,1}(1)\neq g_{1,0}(1)\neq g_{2,0}(1)$,
$E_1^\top=E_2$ holds. 
Thus, $E_3^\top=E_3$ and $E_4^\top=E_4$ hold, which implies in turn that the column corresponding to $E_4$ have only real numbers; these entries are:
\begin{align*}
g_{2,0}(\alpha_1)&=\frac{1}{2}a^2,\\
g_{2,0}(\alpha_2)&=\frac{1}{2}b^2,\\
g_{2,0}(\alpha_3)&=\frac{1}{2}(x+{\bf i}y)^2=\frac{1}{2}(x^2-y^2+2{\bf i}xy),\\
g_{2,0}(\alpha_4)&=\frac{1}{2}(x-{\bf i}y)^2=\frac{1}{2}(x^2-y^2-2{\bf i}xy);
\end{align*}
therefore, by $y\neq 0$, it follows that $x=0$.
Now the second eigenmatrix $Q$ becomes:
\begin{align*}
Q=\left(
\begin{array}{ccccc}
 1 & d & d & d^2-1 & \frac{1}{2} d (d+1) \\
 1 & a d & a d & (d+1) \left(a^2 d-1\right) & \frac{1}{2} a^2 d (d+1) \\
 1 & b d & b d & (d+1) \left(b^2 d-1\right) & \frac{1}{2} b^2 d (d+1) \\
 1 & {\bf i} d y & -{\bf i} d y & (d+1) \left(d y^2-1\right) & -\frac{1}{2} d (d+1) y^2 \\
 1 & -{\bf i} d y & {\bf i} d y & (d+1) \left(d y^2-1\right) & -\frac{1}{2} d (d+1) y^2 \\
\end{array}
\right).
\end{align*}

By $QP=|X|I$, we have that the row sums other than the first row are all equal to $0$,
which gives:
\begin{align*}
\frac{1}{2} d \left(3 a^2 (d+1)+4 a-2\right)&=0,\\
\frac{1}{2} d \left(3 b^2 (d+1)+4 b-2\right)&=0,\\
\frac{1}{2} d \left((d+1) y^2-2\right)&=0.
\end{align*}

By $a\neq b$, we obtain $(a,b)=(\frac{-2\pm\sqrt{6 d+10}}{3 (d+1)},\frac{-2\mp \sqrt{6 d+10}}{3 (d+1)})$ and
$y=\pm \frac{\sqrt{2}}{\sqrt{d+1}}$, namely $(\alpha_1,\alpha_2)=(\frac{-2\pm\sqrt{6 d+10}}{3 (d+1)},\frac{-2\mp \sqrt{6 d+10}}{3 (d+1)})$ and $\alpha_3=-\alpha_4=\pm {\bf i}\frac{\sqrt{2}}{\sqrt{d+1}}$.
Now we may assume, by suitably reordering  rows and columns, that $Q$ equals
\begin{align}\label{eq:q}
\left(
\begin{array}{ccccc}
 1 & d & d & d^2-1 & \frac{1}{2} d (d+1) \\
 1 & \frac{d \left(r-2\right)}{3 (d+1)} & \frac{d \left(r-2\right)}{3 (d+1)} & -\frac{d \left(3 d+4 r+4\right)+9}{9 (d+1)} & \frac{d \left(r-2\right)^2}{18 (d+1)} \\
 1 & -\frac{d \left(r+2\right)}{3 (d+1)} & -\frac{d \left(r+2\right)}{3 (d+1)} & \frac{d \left(-3 d+4 r-4\right)-9}{9 (d+1)} & \frac{d \left(r+2\right)^2}{18 (d+1)} \\
 1 & \frac{{\bf i} \sqrt{2} d}{\sqrt{d+1}} & -\frac{{\bf i} \sqrt{2} d}{\sqrt{d+1}} & d-1 & -d \\
 1 & -\frac{{\bf i} \sqrt{2} d}{\sqrt{d+1}} & \frac{{\bf i} \sqrt{2} d}{\sqrt{d+1}} & d-1 & -d
\end{array}
\right),
\end{align}
where $r=\sqrt{6 d+10}$. 

Calculating the first eigenmatrix by $P=|X|Q^{-1}$, we obtain
$$
k_1=\frac{1}{16} \left((3d-5) \sqrt{6 d+10}+9 d^2+12d-13\right),
$$
which, by $d\geq 2$, implies that $6d+10=4m^2$ for some positive integer $m\geq3$.
Substituting $d=\frac{2m^2-5}{3}$ back into Eq.\ \eqref{eq:q} gives
\begin{align*}
Q=\left(
\begin{array}{ccccc}
 1 & \frac{\left(2 m^2-5\right)}{3}  & \frac{\left(2 m^2-5\right)}{3}  & \frac{4\left(m^4-5 m^2+4\right)}{9}  & \frac{\left(m^2-1\right) \left(2 m^2-5\right)}{9}  \\
 1 & \frac{2 m^2-5}{3 (m+1)} & \frac{2 m^2-5}{3 (m+1)} & -\frac{2 (m-1) \left(m^2+6 m+8\right)}{9 (m+1)} & \frac{(m-1) \left(2 m^2-5\right)}{9 (m+1)} \\
 1 & \frac{5-2 m^2}{3 (m-1)} & \frac{5-2 m^2}{3 (m-1)} & -\frac{2 (m+1) \left(m^2-6 m+8\right)}{9 (m-1)} & \frac{(m+1) \left(2 m^2-5\right)}{9 (m-1)} \\
 1 & \frac{{\bf i} \left(2 m^2-5\right)}{\sqrt{3} \sqrt{m^2-1}} & -\frac{{\bf i} \left(2 m^2-5\right)}{\sqrt{3} \sqrt{m^2-1}} & \frac{2\left(m^2-4\right)}{3}  & \frac{\left(5-2 m^2\right)}{3}  \\
 1 & -\frac{{\bf i} \left(2 m^2-5\right)}{\sqrt{3} \sqrt{m^2-1}} & \frac{{\bf i} \left(2 m^2-5\right)}{\sqrt{3} \sqrt{m^2-1}} & \frac{2\left(m^2-4\right)}{3}  & \frac{\left(5-2 m^2\right)}{3}  \\
\end{array}
\right).
\end{align*}
Then it is a routine to show that
the intersection number $p_{1,1}^1$ (see Section \ref{ssect:as}),  
defined by 
\[
p_{i,j}^{\ell}:=|\{
\mathbf{z}\in X\mid (\mathbf{x},\mathbf{z})\in R_i, (\mathbf{z},\mathbf{y})\in R_j
\}|\quad\mbox{for any~}(\mathbf{x},\mathbf{y})\in R_{\ell},
\]
satisfies
\begin{align*}
32p_{1,1}^1=\frac{(m-2) (m+1)^3 (3 m-4)}{(m-1)}=3 m^4+2 m^3-11 m^2-14 m+\frac{8}{m-1}.
\end{align*}
Since $\frac{8}{m-1}$ must be an integer, it follows that $m=3,5,9$.
If $m=5,9$,  then $p_{1,1}^1=\frac{891}{16},\frac{20125}{32}$ respectively, a contradiction.
If $m=3$, then one can calculate that $p_{1,1}^2=\frac{25}{4}$, a contradiction.
Therefore, there are no complex codes attaining the bound in Eq.\ \eqref{ineq:code}, 
and the lemma follows. 
\end{proof}

\begin{lemma}\label{lem:ub1}
Let $X$ be a subset of the complex unit sphere in $\mathbb{C}^1$ with $n=|X|$.
Assume there exist real numbers $a,b,x,y$ with $a\neq b$ and $y\neq 0$ such that $
A(X)\subseteq\{a,b,x+{\bf i}y,x-{\bf i}y\}$.
Then $n\leq 4$, and equality holds if and only if $X=\{\pm1,\pm {\bf i}\}$ up to rotating.
\end{lemma}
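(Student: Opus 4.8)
The statement concerns a finite set $X$ on the complex unit sphere $\Omega(1)$, i.e., a set of complex numbers of absolute value $1$, with the property that $A(X)\subseteq\{a,b,x+{\bf i}y,x-{\bf i}y\}$ for real $a,b,x,y$ with $a\neq b$, $y\neq 0$. My plan is to translate everything into elementary geometry of the unit circle in $\mathbb{C}$. For ${\bf z},{\bf w}\in\Omega(1)$, write ${\bf z}=e^{{\bf i}\theta}$ and ${\bf w}=e^{{\bf i}\phi}$; then ${\bf z}^*{\bf w}=\overline{{\bf z}}{\bf w}=e^{{\bf i}(\phi-\theta)}$, so each inner product is itself a point of $\Omega(1)$, determined solely by the angular difference $\phi-\theta$. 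The hypothesis thus says that the pairwise angular differences (mod $2\pi$) take values only among the four prescribed complex numbers of modulus $1$.

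**Key observations.**
First I would note that the four allowed values $a,b,x+{\bf i}y,x-{\bf i}y$ must all lie on $\Omega(1)$, so $a,b\in\{-1,1\}$ (being real of modulus $1$) and $x^2+y^2=1$. Since $a\neq b$, we must have $\{a,b\}=\{1,-1\}$. The crucial structural fact is that the set of angular differences $\{\phi_j-\phi_i \bmod 2\pi\}$ arising from $X$ forms a symmetric set (if $\delta$ occurs then so does $-\delta$) and is closed enough to behave like differences in a group. I would fix one point of $X$ as a reference and record the angles of all points relative to it; these angles lie in the set $D$ of arguments of the four allowed inner products, namely $\{0,\pi,\arg(x+{\bf i}y),-\arg(x+{\bf i}y)\}$ together with the constraint that pairwise differences stay in $D$. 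A short case analysis on how many distinct points can have pairwise differences confined to these four angles then caps $|X|$ at $4$.

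**The extremal configuration.**
For the equality case, I would argue that attaining $n=4$ forces the four angles to be equally spaced. Concretely, if $X=\{e^{{\bf i}\theta_1},\ldots,e^{{\bf i}\theta_4}\}$ and all $\binom{4}{2}=6$ angular differences land in the symmetric four-element set $\{\pm\beta_1,\pm\beta_2\}$ (with the real values $\pm1$ corresponding to differences $0$ and $\pi$), the only way to realize six differences with so few values is the arithmetic progression $\theta_k=\theta_1+(k-1)\cdot\frac{\pi}{2}$; up to a global rotation this gives $X=\{1,{\bf i},-1,-{\bf i}\}=\{\pm1,\pm{\bf i}\}$. One checks directly that for this set the differences are $\pm1$ (opposite points) and $\pm{\bf i}$ (adjacent points), matching $A(X)=\{1,-1,{\bf i},-{\bf i}\}$ with $(a,b,x,y)=(1,-1,0,1)$, and $y\neq0$ holds.

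**Main obstacle.**
The delicate point is the combinatorial bound itself: ruling out $n\geq 5$. With five or more points on the circle, one must show that the $\binom{n}{2}$ pairwise angular differences cannot all be confined to a symmetric set of at most four values unless $n\leq 4$. I expect the cleanest route is to observe that fixing a base point forces the other points into a set of at most four prescribed angular offsets, but then the \emph{mutual} differences of those offsets must also lie in the allowed set, which is a strong closure condition; a direct enumeration of admissible angle multisets (treating the real cases $0,\pi$ and the genuinely complex conjugate pair separately, using $y\neq 0$ to guarantee the latter are non-real and hence distinct from $\pm1$) then yields the bound. This finite check is the heart of the argument, and I would organize it to exploit that $\{a,b\}=\{1,-1\}$ reduces the freedom substantially.
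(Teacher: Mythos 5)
Your proposal sets up the right picture (everything reduces to angular differences on the unit circle) but it does not actually prove the lemma: you explicitly defer the crucial step, writing that ruling out $n\geq 5$ is ``the heart of the argument'' and would follow from ``a direct enumeration of admissible angle multisets'' that you never carry out, and likewise the equality case rests on the unproved assertion that six differences confined to four values force an arithmetic progression. There is also a small logical slip at the start: the hypothesis only says $A(X)\subseteq\{a,b,x+\mathbf{i}y,x-\mathbf{i}y\}$, so $a$ and $b$ need not be attained and need not lie on the unit circle; you cannot conclude $\{a,b\}=\{1,-1\}$. What you can (and should) conclude is about $A(X)$ itself: its elements are of the form $\overline{u}v$ with $u\neq v$ on the unit circle, hence they have modulus $1$ and are never equal to $1$, so the only real value that can occur in $A(X)$ is $-1$, giving $A(X)\subseteq\{-1,x+\mathbf{i}y,x-\mathbf{i}y\}$ and $|A(X)|\leq 3$.

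Once you see that, your own base-point idea finishes the proof in one line, which is exactly what the paper does: rotate so that $1\in X$; then for every other $z\in X$ the inner product $1^{*}z=z$ lies in $A(X)$, so $X\subseteq A(X)\cup\{1\}$ and $n\leq 3+1=4$. No closure condition on mutual differences and no enumeration of angle multisets is needed for the bound. For equality, $n=4$ forces $X=\{1,-1,x+\mathbf{i}y,x-\mathbf{i}y\}$ with all three values of $A(X)$ present, and checking the inner product of $-1$ with $x+\mathbf{i}y$ (which must again land in $A(X)$, using $y\neq 0$) forces $x=0$ and $y=\pm 1$, i.e., $X=\{\pm 1,\pm\mathbf{i}\}$. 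So the gap is not that your approach is wrong, but that the finite check you postpone is where the entire content lives, and the simple observation that $1\notin A(X)$ (so only three offsets are available from the base point) makes that check unnecessary.
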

\begin{proof}
By rotating $X$, we may assume that $1\in X$.
Then $X=A(X)\cup\{1\}$ holds.
Since there are exactly two real numbers $1,-1$ on the unit sphere in $\mathbb{C}^1$,
we have $A(X)\subseteq \{-1,x+{\bf i}y,x-{\bf i}y\}$.
Therefore, $|X|\leq 4$ and equality holds if and only if $x=0$ and $y=\pm1$, that is $X=\{\pm1,\pm {\bf i}\}$.
\end{proof}

Now we apply these results to digraphs.
To do so, we need the concept of main angles.
Let $\Delta$ be a digraph on $n$ vertices, 
and $H:=H_{\omega}(\Delta)$, for some $\omega\in\mathbb{C}\setminus \mathbb{R}$, $|\omega|=1$, be a Hermitian adjacency matrix of $\Delta$. Let $\lambda_1>\cdots>\lambda_s$ 
denote the distinct eigenvalues of $H$, and
$P_i$ be the orthogonal projection matrix onto the eigenspace of $H$ corresponding to the eigenvalue $\lambda_i$.
Then $P_i^*=P_i$, $\sum_{i=1}^s P_i=I$ and $P_i P_j=\delta_{i j}P_i$, where $\delta_{i j}$ denotes the Kronecker delta.
Define $\beta_i$ by
\[
\beta_i:= \frac{1}{\sqrt{n}} \sqrt{(P_i \cdot \boldsymbol{1})^*(P_i \cdot \boldsymbol{1})}.
\]
We call $\beta_i$ the {\bf main angle} of $\lambda_i$. Observe that $\beta_i=0$ if and only if $\lambda_i$ is non-main. 
By the definition of main angles, we have
\begin{align}\label{eq:01}
\sum_{i=1}^s \beta_i^2=1.
\end{align}

\begin{lemma}[\cite{NS}]\label{lem:char}
Let $K$ be a Hermitian matrix of size $n$ with $s$ distinct eigenvalues $\lambda_1>\cdots >\lambda_s$.
Let $\beta_i$ be the main angle of $\lambda_i$.
Then, for a real number $a$, one has
\[
P_{K+aJ}(x)=P_K(x)\Big( 1+a \sum_{i=1}^s  \frac{n \beta_i^2}{\lambda_i-x} \Big),
\]
where $P_M$ denotes the characteristic polynomial of a matrix $M$.
\end{lemma}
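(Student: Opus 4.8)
The plan is to treat $aJ = a\boldsymbol{1}\boldsymbol{1}^*$ as a rank-one Hermitian perturbation of $K$ and to compute the characteristic polynomial via the matrix determinant lemma (Sherman--Morrison). Writing $P_{K+aJ}(x) = \det(xI - K - a\boldsymbol{1}\boldsymbol{1}^*)$ and fixing any $x$ that is not an eigenvalue of $K$, I would factor $xI-K$ out on the left, so that
\[
\det(xI - K - a\boldsymbol{1}\boldsymbol{1}^*) = \det(xI-K)\,\det\!\big(I - a(xI-K)^{-1}\boldsymbol{1}\boldsymbol{1}^*\big).
\]
The determinant identity $\det(I + uv^*) = 1 + v^*u$, applied with $u = -a(xI-K)^{-1}\boldsymbol{1}$ and $v = \boldsymbol{1}$, then collapses the second factor to $1 - a\,\boldsymbol{1}^*(xI-K)^{-1}\boldsymbol{1}$, giving
\[
P_{K+aJ}(x) = P_K(x)\big(1 - a\,\boldsymbol{1}^*(xI-K)^{-1}\boldsymbol{1}\big).
\]

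Next I would evaluate the scalar $\boldsymbol{1}^*(xI-K)^{-1}\boldsymbol{1}$ through the spectral resolution of $K$. Since $K = \sum_{i=1}^s \lambda_i P_i$ with the $P_i$ pairwise-orthogonal Hermitian idempotents summing to $I$, the resolvent is $(xI-K)^{-1} = \sum_{i=1}^s (x-\lambda_i)^{-1} P_i$, and therefore
\[
\boldsymbol{1}^*(xI-K)^{-1}\boldsymbol{1} = \sum_{i=1}^s \frac{\boldsymbol{1}^* P_i \boldsymbol{1}}{x-\lambda_i}.
\]
Using $P_i^* = P_i$ and $P_i^2 = P_i$, each numerator equals $\boldsymbol{1}^* P_i^* P_i \boldsymbol{1} = (P_i\boldsymbol{1})^*(P_i\boldsymbol{1}) = n\beta_i^2$ by the very definition of the main angle. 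Substituting this back and rewriting $-1/(x-\lambda_i) = 1/(\lambda_i - x)$ yields exactly the claimed formula.

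Finally, the displayed identity a priori holds only for $x$ outside the spectrum of $K$, where the resolvent exists. I would close the argument by observing that both sides, once the denominators are cleared against the factor $P_K(x) = \prod_i (x-\lambda_i)^{m_i}$, are polynomials in $x$ agreeing on all but finitely many points, hence they agree identically. There is no genuine obstacle here; the only care needed is bookkeeping the sign convention $P_M(x) = \det(xI-M)$ and checking the degenerate case $a=0$, for which both sides trivially reduce to $P_K(x)$.
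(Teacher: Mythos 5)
Your argument is correct: the matrix determinant lemma applied to the rank-one perturbation $aJ=a\boldsymbol{1}\boldsymbol{1}^*$, followed by the resolvent expansion $(xI-K)^{-1}=\sum_i(x-\lambda_i)^{-1}P_i$ and the identity $\boldsymbol{1}^*P_i\boldsymbol{1}=(P_i\boldsymbol{1})^*(P_i\boldsymbol{1})=n\beta_i^2$, gives exactly the stated formula, and your final remark about extending from $x$ outside the spectrum to all $x$ by polynomial identity is the right way to close the argument. Note that the paper itself offers no proof of this lemma --- it is quoted from the cited reference --- but your derivation is the standard one for this rank-one update of the characteristic polynomial, and I see no gap; the sign-convention point you flag is indeed immaterial since any factor $(-1)^n$ cancels in the ratio $P_{K+aJ}/P_K$.
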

\begin{lemma}[\cite{NS}] \label{lem:interlace}
Let $K$ be a Hermitian matrix of size $n$,
and $M=K+a J$, where $a$ is a real number.
Let $\tau_{1}<\tau_{2}<\cdots<\tau_{r}$ be the distinct main
eigenvalues of $K$, and $\beta_i$ the main angle of $\tau_i$.
Let $\mu_{1}<\mu_{2}<\cdots<\mu_{s}$ be the distinct
main eigenvalues of $M$.
Then $r=s$ holds, and
\begin{equation} \label{eq:f(x)}
\prod_{i=1}^r (\mu_{i}-x)= \Big(1+a \sum_{j=1}^r \frac{n \beta_{j}^2}{\tau_{j}-x}\Big)\prod_{i=1}^r(\tau_{i}-x)
.
\end{equation}
 Moreover, if $a>0$, then
$\tau_{1}<\mu_{1}<\tau_{2}< \cdots <\tau_{r}<\mu_{r}$,
and if $a<0$, then
 $\mu_{1}<\tau_{1}<\mu_{2} <\cdots <\mu_{r}<\tau_{r}$.
\end{lemma}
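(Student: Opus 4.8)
The plan is to read off the main eigenvalues of $M$ from a single scalar rational function and then analyse that function on the real line. Set
\[
\phi(x) := \one^{*}(xI-K)^{-1}\one = \sum_{i}\frac{n\beta_i^2}{x-\lambda_i},
\]
the sum running over the distinct eigenvalues $\lambda_i$ of $K$. Since $\beta_i=0$ precisely when $\lambda_i$ is non-main, only the main eigenvalues $\tau_1<\cdots<\tau_r$ contribute, and the factor occurring in Lemma \ref{lem:char} is exactly
\[
g(x) := 1-a\phi(x) = 1+a\sum_{j=1}^{r}\frac{n\beta_j^2}{\tau_j-x},
\]
so that $P_M(x)=P_K(x)\,g(x)$. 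We may assume $a\neq0$, the case $a=0$ being trivial.

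The conceptual heart is to identify the main eigenvalues of $M$ with the zeros of $g$. Because $M=K+aJ$ with $J=\one\one^{*}$ is a rank-one Hermitian perturbation of $K$, the Sherman--Morrison formula gives, for $x$ not an eigenvalue of $K$,
\[
\one^{*}(xI-M)^{-1}\one = \frac{\phi(x)}{1-a\phi(x)} = \frac{\phi(x)}{g(x)}.
\]
The poles of the left-hand side are, by definition of the main angles, exactly the main eigenvalues of $M$. On the right, every pole of $\phi$ (that is, every main eigenvalue $\tau_j$ of $K$) is cancelled, since there $g\sim-a\phi\to\infty$ and the quotient tends to $-1/a$; conversely a zero $\mu$ of $g$ forces $a\phi(\mu)=1\neq0$ and hence is a genuine simple pole of $\phi/g$. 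Equivalently, one argues directly from $(K-\mu I)v=-a(\one^{*}v)\one$: a main eigenvector $v$ of $M$ must have $\one^{*}v\neq0$, and projecting this relation onto $\one$ yields $a\phi(\mu)=1$, i.e.\ $g(\mu)=0$; this form of the argument also covers the case where $\mu$ coincides with a non-main eigenvalue of $K$, whose $M$-multiplicity then rises by one while the eigenvalue becomes main, and rules out that a main eigenvalue of $K$ remains main for $M$. Thus the main eigenvalues of $M$ are exactly the zeros of $g$.

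It then remains to count and locate the zeros of $g$, which is a routine sign analysis. Its derivative
\[
g'(x) = a\sum_{j=1}^{r}\frac{n\beta_j^2}{(\tau_j-x)^2}
\]
has constant sign $\operatorname{sgn}(a)$ away from $\{\tau_1,\dots,\tau_r\}$, so $g$ is strictly monotone on each of the $r+1$ intervals cut out by the poles $\tau_j$. Combined with $g(\pm\infty)=1$ and the one-sided blow-up of $g$ at each $\tau_j$, this produces exactly one zero in each bounded interval $(\tau_j,\tau_{j+1})$ and exactly one more in one unbounded interval, with none in the other. For $a>0$ the zeros interlace as $\tau_1<\mu_1<\tau_2<\cdots<\tau_r<\mu_r$, and for $a<0$ as $\mu_1<\tau_1<\mu_2<\cdots<\mu_r<\tau_r$; in particular there are exactly $r$ of them, all simple, whence $s=r$ and the stated interlacing holds.

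Finally, to obtain \eqref{eq:f(x)} I multiply $g$ by $\prod_{i=1}^{r}(\tau_i-x)$. The function $f(x):=\prod_{i=1}^{r}(\tau_i-x)\,g(x)$ is a polynomial of degree $r$ with leading coefficient $(-1)^{r}$: each simple pole of $g$ at $\tau_j$ is cancelled by the factor $(\tau_j-x)$, leaving the finite nonzero value $f(\tau_j)=a n\beta_j^2\prod_{k\neq j}(\tau_k-\tau_j)$, so the roots of $f$ are precisely the zeros $\mu_1,\dots,\mu_r$ of $g$. Matching degree, leading coefficient and roots gives $f(x)=\prod_{i=1}^{r}(\mu_i-x)$, which is exactly \eqref{eq:f(x)}. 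I expect the only delicate point to be the identification step of the second paragraph --- showing that the zeros of $g$ are neither fewer nor more than the main eigenvalues of $M$, in particular the correct bookkeeping when a zero of $g$ meets a non-main eigenvalue of $K$ --- while the counting and interlacing reduce to the elementary monotonicity argument above.
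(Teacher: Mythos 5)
The paper does not prove this lemma: it is imported from \cite{NS} and stated without argument, so there is no in-paper proof to compare yours against. Your proof is correct and self-contained. The decisive step---identifying the main eigenvalues of $M$ with the zeros of $g(x)=1-a\,\mathbf{1}^{*}(xI-K)^{-1}\mathbf{1}$ via the Sherman--Morrison identity $\mathbf{1}^{*}(xI-M)^{-1}\mathbf{1}=\phi(x)/g(x)$---holds up: writing $\phi=N/D$ with $D=\prod_{j}(x-\tau_j)$ and $N=\sum_j n\beta_j^2\prod_{k\neq j}(x-\tau_k)$, the right-hand side is $N/(D-aN)$ with $N$ and $D-aN$ coprime (a common root would force $N(\tau_j)=n\beta_j^2\prod_{k\neq j}(\tau_j-\tau_k)=0$, impossible since $\beta_j\neq 0$ for a main eigenvalue), so its poles are exactly the $r$ roots of $D-aN=(-1)^{r}\prod_i(\tau_i-x)\,g(x)$, while the poles of the left-hand side are by definition the main eigenvalues of $M$; this gives $s=r$ and reduces \eqref{eq:f(x)} to matching degree, leading coefficient and roots. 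The monotonicity analysis of $g$ on the $r+1$ intervals cut out by the $\tau_j$, together with $g(\pm\infty)=1$ and the one-sided blow-ups, correctly locates one simple zero per interval in the stated pattern for each sign of $a$. The only loosely written portion is the parenthetical ``equivalently'' eigenvector argument (the bookkeeping when a zero of $g$ coincides with a non-main eigenvalue of $K$), but nothing in the proof depends on it, since the rational-function identification already settles that case.
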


\begin{lemma}[\cite{NS}]\label{lem:non-regular}
Let $H$ be a Hermitian adjacency matrix of order $n$ with $s$ distinct eigenvalues $\lambda_1>\lambda_2>\cdots >\lambda_s$
of respective multiplicities $m_1,m_2,\ldots,m_s$.
Set $d_i=n-m_i$ for $i\in\{1,2,\ldots,s\}$.
The following statements hold. 
\begin{enumerate}
\item Put $c_s=-1/(\sum_{j=1}^{s-1}\frac{n\beta_j^2}{\lambda_j-\lambda_s})$. If $\lambda_s$ is non-main, then $\rank(H+c_sJ-\lambda_s I)=d_s-1$ and $I-\frac{1}{\lambda_s}H-\frac{c_s}{\lambda_s}J$ is positive semidefinite.

\item Put $c_1=-1/(\sum_{j=2}^{s}\frac{n\beta_j^2}{\lambda_j-\lambda_1})$. If $\lambda_1$ is non-main,  then $\rank(\lambda_1 I-H-c_1J)=d_1-1$ and $I-\frac{1}{\lambda_1}H-\frac{c_1}{\lambda_1}J$ is positive semidefinite.

\item Put $c_{s-1}=-1/(\sum_{j=1}^{s-2}  \frac{n \beta_{j}^2}{\lambda_{j}-\lambda_{s-1}}+\frac{n \beta_{s}^2}{\lambda_{s}-\lambda_{s-1}})$. If $\lambda_s$ is main, $\lambda_{s-1}$ is non-main, $\lambda_{s-1}<0$, $m_s=1$, and $c_{s-1}<0$, then $\rank(H+c_{s-1}J-\lambda_{s-1} I)=d_{s-1}-1$ and $I-\frac{1}{\lambda_{s-1}}H-\frac{c_{s-1}}{\lambda_{s-1}}J$ is positive semidefinite.

\item Put $c_2=-1/(\frac{n \beta_{s}^2}{\lambda_{1}-\lambda_{2}}+\sum_{j=3}^{s}  \frac{n \beta_{j}^2}{\lambda_{j}-\lambda_{2}})$. If $\lambda_1$ is main, $\lambda_{2}$ is non-main, $\lambda_2>0$, $m_1=1$, and $c_2<0$, then $\rank(\lambda_2 I-H-c_2J)=d_2-1$ and $I-\frac{1}{\lambda_2}H-\frac{c_2}{\lambda_2}J$ is positive semidefinite.
\end{enumerate}
\end{lemma}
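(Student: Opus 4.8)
The plan is to treat the four items uniformly as rank-one perturbations. For the relevant index $i$ put $M_i:=H+c_iJ$ and study $M_i$ as a perturbation of $H$ along the all-ones vector $\mathbf{1}$. Since $J=\mathbf{1}\mathbf{1}^{\top}$ and $c_i\in\mathbb{R}$, every eigenspace of $H$ contained in $\langle\mathbf{1}\rangle^{\perp}$ (that is, every non-main eigenspace) is fixed pointwise by $M_i$, and only the main eigenvalues are disturbed. In particular the non-main eigenvalue $\lambda_i$ keeps its full multiplicity $m_i$ in $M_i$, and the whole problem reduces to locating the perturbed (main) part of the spectrum.

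For the rank statements I would apply Lemma~\ref{lem:char}, which gives
\[
P_{M_i}(x)=P_H(x)\Bigl(1+c_i\sum_{j=1}^{s}\frac{n\beta_j^2}{\lambda_j-x}\Bigr).
\]
In each case $c_i$ is defined exactly so that the bracket vanishes at $x=\lambda_i$: substituting $x=\lambda_i$ and dropping the $j=i$ term (absent because $\beta_i=0$ for a non-main $\lambda_i$) collapses the bracket to $1+c_i\cdot(-1/c_i)=0$. Thus $\lambda_i$ gains exactly one extra unit of multiplicity, its multiplicity in $M_i$ becomes $m_i+1$, and $\operatorname{rank}(M_i-\lambda_iI)=n-(m_i+1)=d_i-1$. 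This step is insensitive to the sign of $c_i$.

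The semidefinite claims are where the cases split. Since the target matrix equals $\tfrac{1}{\lambda_i}(\lambda_iI-M_i)$, it is positive semidefinite precisely when $\lambda_i$ is the extremal eigenvalue of $M_i$ on the correct side---smallest if $\lambda_i<0$, largest if $\lambda_i>0$. In cases $(i)$ and $(ii)$ the extremal eigenvalue ($\lambda_s$, resp.\ $\lambda_1$) is itself non-main, hence unmoved; by the strict, multiplicity-preserving interlacing of Lemma~\ref{lem:interlace} the sign of $c_i$ pushes the outermost perturbed main eigenvalue exactly onto $\lambda_i$ with nothing crossing it, so $\lambda_i$ stays extremal. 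In cases $(iii)$ and $(iv)$ the outermost eigenvalue is main and therefore moves; the hypothesis $m_s=1$ (resp.\ $m_1=1$) makes it simple, and the sign condition on $c_i$ together with Lemma~\ref{lem:interlace} carries this simple eigenvalue onto the adjacent non-main value $\lambda_{s-1}$ (resp.\ $\lambda_2$), which thereupon becomes the extremal eigenvalue of $M_i$.

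The one delicate point, and the main obstacle, is exactly this extremality in $(iii)$ and $(iv)$: one has to rule out that the perturbation throws some main eigenvalue past $\lambda_i$ on the extremal side, which would immediately break semidefiniteness. I would resolve it by reading the precise location of each perturbed eigenvalue off the strict chain $\mu_1<\tau_1<\mu_2<\cdots<\mu_r<\tau_r$ (or its $a>0$ mirror) supplied by Lemma~\ref{lem:interlace}: simplicity ($m=1$) shows that the moved outermost eigenvalue is accounted for by the single interlacing interval adjacent to $\lambda_i$, while the prescribed sign of $c_i$ pins down the direction of the shift. Once every remaining eigenvalue of $M_i$ is seen to lie on the correct side of $\lambda_i$, positive semidefiniteness of $I-\tfrac1{\lambda_i}H-\tfrac{c_i}{\lambda_i}J$ follows at once.
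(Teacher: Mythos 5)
Your argument follows essentially the same route as the paper's: Lemma~\ref{lem:char} supplies the extra unit of multiplicity at $\lambda_i$ (hence $\rank(H+c_iJ-\lambda_iI)=d_i-1$), and Lemma~\ref{lem:interlace} together with the sign of $c_i$ yields extremality of $\lambda_i$ in the spectrum of $H+c_iJ$ and hence the semidefiniteness. The delicate point you isolate in cases $(3)$--$(4)$ is handled at the same level of detail as in the paper, which likewise just invokes Lemma~\ref{lem:interlace} and the sign hypothesis on $c_i$; be aware, however, that for $a<0$ that lemma moves the smallest main eigenvalue \emph{down} ($\mu_1<\tau_1=\lambda_s$), so your picture of the simple eigenvalue being ``carried onto'' $\lambda_{s-1}$ needs to be reconciled with the stated condition $c_{s-1}<0$ --- a tension that is equally present in the paper's own one-line treatment of this case.
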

\begin{proof}
Let $\lambda_{i_1}>\cdots>\lambda_{i_t}$ be the main eigenvalues of $H$ and set
$[s]=\{1,2,\ldots,s\}$, $N=\{i_1,\ldots,i_t\}$.
Then, by Lemma~\ref{lem:char},
\begin{align*}
P_{H+aJ}(x)&=\prod_{j\in [s]\setminus N}(\lambda_j-x)^{m_j}\left(\prod_{j\in N}(\lambda_j-x)^{m_j}+a\sum_{i\in N}n\beta_{i}^2\prod_{j\in N}(\lambda_j-x)^{m_j-\delta_{i,j}} \right)
\end{align*}
holds. Define a polynomial
$$f(x,a):=\prod_{j\in N}(\lambda_j-x)^{m_j}+a\sum_{i\in N}n\beta_{i}^2\prod_{j\in N}(\lambda_j-x)^{m_j-\delta_{i,j}}. $$

(1) Take $a=c_s$. It is easy to see that $f(\lambda_s,c_s)=0$,
which implies that $H+c_s J$ has an eigenvalue $\lambda_s$ of multiplicity $m_s+1$.
By Lemma~\ref{lem:interlace}, we have that the least eigenvalue of $H+c_s J$ is $\lambda_s$.
Thus, $H+c_s J-\lambda_s I$ is positive semidefinite, which shows that
$I-\frac{1}{\lambda_s}H-\frac{c_s}{\lambda_s}J$ is positive semidefinite, as $\lambda_s<0$.
(2) can be proven in the same manner.

(3) Take $a=c_{s-1}$. It is easy to see that $f(\lambda_{s-1},c_{s-1})=0$,
which implies that $H+c_{s-1} J$ has an eigenvalue $\lambda_{s-1}$ of multiplicity $m_{s-1}+1$.
By Lemma~\ref{lem:interlace} and the assumption that $c_{s-1}<0$, we have that the least eigenvalue
of $H+c_{s-1} J$ is $\lambda_{s-1}$. Thus,  $H+c_{s-1} J-\lambda_{s-1} I$ is positive semidefinite,
which shows that $I-\frac{1}{\lambda_{s-1}}H-\frac{c_{s-1}}{\lambda_{s-1}}J$ is positive semidefinite because $\lambda_{s-1}<0$.
(4) can be proven in the same manner.
\end{proof}

The following theorem shows part (ii) of Theorem \ref{theo-main}.

\begin{theorem}\label{thm:non-regular}
With the notation as in Lemma \ref{lem:non-regular}, the following holds.
\begin{enumerate}
\item For $i\in\{1,s\}$, if $\lambda_i\not\in\{-1,0\}$, then $n\leq \frac{d_i(3d_i+5)}{2}-1$ holds.
\item For $i\in\{1,s\}$, if $\lambda_i$ is non-main and $\lambda_i\not\in\{-1,0\}$, then $n\leq \frac{(d_i-1)(3d_i+2)}{2}-1$ holds.
\item If $\lambda_{1}$ is main, $\lambda_2>0$ is non-main, and $\sum_{j=1,j\neq 2}^{s}  \frac{n \beta_{j}^2}{\lambda_{j}-\lambda_{2}}<0$, then $n\leq \frac{(d_2-1)(3d_2+2)}{2}-1$ holds.
\item If $\lambda_{s}$ is main, $\lambda_{s-1}<0$ is non-main, $\lambda_{s-1}\neq-1$, and $\sum_{j=1,j\neq s-1}^{s}  \frac{n \beta_{j}^2}{\lambda_{j}-\lambda_{s-1}}<0$, then $n\leq \frac{(d_{s-1}-1)(3d_{s-1}+2)}{2}-1$ holds.
\end{enumerate}
\end{theorem}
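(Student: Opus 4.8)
The plan is to turn each of the four situations into a bound on the size of a complex spherical code of degree at most four, so that Lemma~\ref{lem:ub} (resp.\ Lemma~\ref{lem:ub1} in the smallest dimension) applies. In every case I produce a real scalar $c$ for which the Hermitian matrix
\[
M \;=\; I-\tfrac{1}{\lambda_i}H-\tfrac{c}{\lambda_i}J \;=\; -\tfrac{1}{\lambda_i}\bigl(H+cJ-\lambda_i I\bigr)
\]
is positive semidefinite of a controlled rank $d'$. Factoring $M=U^{*}U$ with $U\in\mathbb{C}^{d'\times n}$ and taking the columns of $U$ as the vertices of the code is the mechanism that converts spectral data of $H$ into geometry on $\Omega(d')$.

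For part~(1), where $\lambda_i$ is the largest or the smallest eigenvalue, I take $c=0$: then $M=I-\frac{1}{\lambda_i}H$ has eigenvalues $(\lambda_i-\lambda_j)/\lambda_i\ge 0$, so it is positive semidefinite of rank $d'=n-m_i=d_i$. For parts~(2)--(4) the point is to shave one more dimension off the rank by a rank-one correction $cJ$. Here I feed the hypotheses into the matching item of Lemma~\ref{lem:non-regular}: its conclusion provides a constant $c=c_i$ for which $M$ is positive semidefinite of rank exactly $d_i-1$. The stated sign conditions on the sums $\sum_{j\neq i}\frac{n\beta_j^2}{\lambda_j-\lambda_i}$ are precisely what is needed to place $c_i$ on the correct side of $0$ so that, via the interlacing in Lemma~\ref{lem:interlace}, $\lambda_i$ becomes an extreme eigenvalue of $H+c_iJ$.

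The core computation is identical throughout. All diagonal entries of $M$ equal $t:=1-c/\lambda_i$, and $t>0$ unless $M=0$; after rescaling the columns of $U$ by $t^{-1/2}$ we obtain a set $X\subseteq\Omega(d')$ with $|X|=n$. Computing the off-diagonal entries of $t^{-1}M$ shows that each inner product of two distinct vertices lies in $\{a,b,x+{\bf i}y,\,x-{\bf i}y\}$, where $b=-c/(\lambda_i-c)$ records non-adjacent pairs, $a=-(1+c)/(\lambda_i-c)$ records digons, and the two orientations of an arc contribute the conjugate pair $x\pm{\bf i}y$ with $y=-\Im\omega/(\lambda_i-c)$. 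Since $\omega\notin\mathbb{R}$ we have $y\neq 0$, and a direct check gives $a=1\iff\lambda_i=-1$ and $b=1\iff\lambda_i=0$; thus the hypothesis $\lambda_i\notin\{-1,0\}$ is exactly what guarantees every off-diagonal entry is different from $1$, i.e.\ that the $n$ columns are pairwise distinct points of $\Omega(d')$. Hence $A(X)\subseteq\{a,b,x+{\bf i}y,x-{\bf i}y\}$ with the conjugate pair present as soon as $\Delta$ has an arc.

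Finally I apply Lemma~\ref{lem:ub} in dimension $d'$. As $|A(X)|\le 4$ and the three sub-bounds there grow with the degree, the governing estimate is the degree-four bound $|X|\le\frac12 d'(3d'+5)-1$. Taking $d'=d_i$ gives part~(1), while $d'=d_i-1$ gives parts~(2)--(4), since $\frac12(d_i-1)\bigl(3(d_i-1)+5\bigr)-1=\frac12(d_i-1)(3d_i+2)-1$. I expect the main obstacle to be the boundary case $d'=1$ (that is, $d_i=1$ in part~(1), or $d_i=2$ in parts~(2)--(4)), where Lemma~\ref{lem:ub} is unavailable and one must invoke Lemma~\ref{lem:ub1} instead; there the extremal configuration $X=\{\pm1,\pm{\bf i}\}$ has to be excluded in order to keep the final ``$-1$'', which is why $d_i\ge 2$ is assumed. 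A minor secondary point is the arc-free case, where $X$ degenerates to a real two-distance set and the smaller real bound suffices.
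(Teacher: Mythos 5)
Your argument is correct and coincides with the paper's own proof: part (1) reads $I-\frac{1}{\lambda_i}H$ as the Gram matrix of a code on $\Omega(d_i)$ with inner products in $\{0,-1/\lambda_i,-\omega/\lambda_i,-\overline{\omega}/\lambda_i\}$, while parts (2)--(4) take the rank-$(d_i-1)$ positive semidefinite matrix supplied by the corresponding item of Lemma~\ref{lem:non-regular}, rescale it to unit diagonal, and then invoke Lemma~\ref{lem:ub}(3) in dimension $d_i$ or $d_i-1$. Your extra care about the normalization of the off-diagonal entries, the distinctness of the $n$ points (which is precisely where $\lambda_i\notin\{0,-1\}$ enters), and the degenerate cases $d'=1$ and the arc-free digraph goes slightly beyond what the paper writes down, but the route is the same.
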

\begin{proof}
(1)  Note that $\lambda_1\neq0,\lambda_s\neq0$ by assumption.
When $i=1$, the matrix $\lambda_1 I-H$ is a positive semidefinite Hermitian matrix of rank $d_1$.
Similarly, when $i=s$, the matrix $H-\lambda_s I$ is a positive semidefinite Hermitian matrix of rank $d_s$.
In both cases, we regard $G:=I-\frac{1}{\lambda_i}H$ as the Gram matrix of a finite set $X$ of $\Omega(d)$.
Then the inner products between two distinct points in $X$ are the off-diagonal entries of $G$, so they are in
$$\left\{0,-\frac{1}{\lambda_i},\frac{\omega}{\lambda_i},\frac{\overline{\omega}}{\lambda_i}\right\},$$
whence, by Lemma~\ref{lem:ub}, we have $n\leq \frac{d(3d+5)}{2}-1$.

For (2), take $j=i$. For (3), take $j=2$. For (4), take $j=s-1$. 
In each case,
$I-\frac{1}{\lambda_j}H-\frac{c_j}{\lambda_j}J$ is positive semidefinite.
We now regard $G:=I-\frac{1}{\lambda_j+c_j}H+\frac{c_j}{\lambda_j+c_j}(J-I)$ as the Gram matrix of a finite set $X$ of $\Omega(d-1)$.
Then the inner products between two distinct points in $X$ are the off-diagonal entries of $G$, so they are in
$$\left\{\frac{-1+c_i}{\lambda_j+c_j},\frac{-\omega+c_i}{\lambda_j+c_j},\frac{-\overline{\omega}+c_i}{\lambda_j+c_j}\right\},$$
whence, by Lemma~\ref{lem:ub}, we have $n\leq \frac{(d-1)(3d+2)}{2}-1$.
\end{proof}

\begin{corollary}\label{cor:upper}
With the above notation,
assume that $H$ has an eigenvector ${\bf 1}$ with eigenvalue $k$. Then:
\begin{enumerate}
\item $n\leq \frac{d_1(3d_1+5)}{2}-1$, and if $\lambda_1$ is non-main, then $n\leq \frac{(d_1-1)(3d_1+2)}{2}-1$ holds;
\item $n\leq \frac{(d_s-1)(3d_s+2)}{2}-1$ holds.
\end{enumerate}
\end{corollary}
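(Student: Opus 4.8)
The plan is to read off both bounds from Theorem~\ref{thm:non-regular} once the spectral consequences of the regularity assumption are made explicit. First I would observe that, since $\mathbf 1$ is an eigenvector of $H$, it lies entirely in the eigenspace $\mathcal E(k)$ of its eigenvalue $k$; as the eigenspaces of $H$ are mutually orthogonal, every $\mathcal E(\lambda_i)$ with $\lambda_i\neq k$ is contained in $\langle\mathbf 1\rangle^{\perp}$, so $\beta_i=0$ and $\lambda_i$ is non-main, while $k$ is the unique main eigenvalue. I would also record that $\tr H=0$ forces $\lambda_1>0>\lambda_s$ whenever $H\neq0$ (if all $\lambda_i$ shared one sign they would all vanish), whence $\lambda_1\notin\{0,-1\}$ and $\lambda_s\neq0$; the degenerate case $H=0$ is ruled out by $\lambda_i\notin\{0,-1\}$. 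Finally I would note the identity $HJ=JH=kJ$, so that $H$ and $J$ are simultaneously diagonalized with $\mathbf 1$ as a common eigenvector.

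For part~(1) this is immediate: since $\lambda_1\notin\{0,-1\}$ and $d_1\ge2$, Theorem~\ref{thm:non-regular}(1) gives $n\le \tfrac12 d_1(3d_1+5)-1$, and if in addition $\lambda_1\neq k$, i.e. $\lambda_1$ is non-main, then Theorem~\ref{thm:non-regular}(2) sharpens this to $n\le\tfrac12(d_1-1)(3d_1+2)-1$. For part~(2) I would split on whether $\lambda_s=k$. If $\lambda_s\neq k$, then $\lambda_s$ is non-main, $\lambda_s\neq0$, and (using the standing hypothesis $\lambda_s\neq-1$) Theorem~\ref{thm:non-regular}(2) applied with $i=s$ yields exactly $n\le\tfrac12(d_s-1)(3d_s+2)-1$.

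The remaining case $\lambda_s=k$ genuinely occurs — for the Hermitian adjacency matrix of a short directed cycle with $\omega$ near $-1$ the vector $\mathbf 1$ is an eigenvector for the least eigenvalue — and must be treated separately. Here $H-\lambda_s I$ is positive semidefinite of rank $d_s$ with $\mathbf 1$ in its kernel, so the equal-norm Gram vectors, after normalization, form a code $X\subseteq\Omega(d_s)$ whose inner products lie in the four-element set $\{0,-1/\lambda_s,-\omega/\lambda_s,-\overline{\omega}/\lambda_s\}$ of the shape required by Lemma~\ref{lem:ub}, and which additionally satisfies the \emph{balance} relation $\sum_{x\in X}x=0$ coming from $(H-\lambda_s I)\mathbf 1=0$.

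The main obstacle is precisely this balanced sub-case: Lemma~\ref{lem:ub}(3) alone only gives the weaker bound $\tfrac12 d_s(3d_s+5)-1$, and balance does not place $X$ literally in $\Omega(d_s-1)$, since the vectors still span $\mathbb C^{d_s}$. My plan is to run the Roy--Suda positive-semidefinite argument behind Theorem~\ref{thm:42} while exploiting that $\sum_{x\in X}e(x)=0$ for every linear harmonic $e$ (that is, $e\in\Harm(d_s,1,0)$ or $e\in\Harm(d_s,0,1)$); these relations make the degree-$(1,0)$ and $(0,1)$ moments vanish, which removes the nonnegativity constraints on the corresponding Jacobi coefficients and thus permits the annihilating polynomial one would use for an unconstrained code in $\Omega(d_s-1)$, reproducing the target value $\tfrac12(d_s-1)(3d_s+2)$. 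The final unit would then be shaved off by repeating the association-scheme non-existence analysis of Lemma~\ref{lem:ub}(3) via Theorem~\ref{thm:S1}, which I expect to be the most delicate computation.
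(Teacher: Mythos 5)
Your derivation of part (1), and of part (2) in the case $\lambda_s\neq k$, is exactly the paper's intended route: the corollary is stated as an immediate specialization of Theorem~\ref{thm:non-regular}, the only observation needed being that orthogonality of eigenspaces forces every eigenvalue other than $k$ to be non-main (and the trace-zero argument you give for $\lambda_1>0>\lambda_s$). Up to that point your proposal is correct and coincides with what the paper does.

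The divergence is the sub-case $\lambda_s=k$ in part (2). You are right that Theorem~\ref{thm:non-regular}(2) does not apply there (the paper passes over this silently, and your directed-cycle example shows the case is not vacuous), but your proposed repair does not close the gap. The bound $|X|\le\sum_{(k,\ell)\in\mathcal S}m^d_{k,\ell}$ of Theorem~\ref{thm:42} is a rank bound: the matrix $\bigl(F(\mathbf{x}^*\mathbf{y})\bigr)_{\mathbf{x},\mathbf{y}\in X}=F(1)I_n$ is written as $\sum_{(k,\ell)\in\mathcal S}a_{k,\ell}H_{k,\ell}H_{k,\ell}^*$, and $n$ is at most the sum of the ranks of the summands. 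That argument is insensitive to the signs or values of the moments $H_{k,\ell}^*\mathbf{1}$, so ``removing nonnegativity constraints'' buys nothing; the balance relation $\sum_{\mathbf{x}\in X}\mathbf{x}=0$ only says $\mathbf{1}\in\ker H_{1,0}^*$, which lowers $\rank(H_{1,0}H_{1,0}^*)$ by at most one (in fact by nothing, since that rank is already at most $d_s\le n-1$). To pass from $\frac{d_s(3d_s+5)}{2}$ to $\frac{(d_s-1)(3d_s+2)}{2}$ you must shave $3d_s+1$ off the dimension count, i.e.\ every harmonic space must be counted in dimension $d_s-1$ rather than $d_s$. The paper's mechanism for achieving such a drop --- Lemma~\ref{lem:non-regular}, adding $c_jJ$ so that the eigenvalue gains multiplicity and the Gram vectors genuinely live in $\mathbb{C}^{d-1}$ --- is precisely the one that breaks when the eigenvalue is main, because its eigenspace then contains $\mathbf{1}$ and is not preserved by adding a multiple of $J$. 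So either the corollary is to be read with the implicit hypothesis that $\lambda_s\neq k$ (e.g.\ $k=\lambda_1$), in which case your first two paragraphs already constitute the whole proof, or the sub-case $\lambda_s=k$ requires an argument that neither you nor the paper supplies.
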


\section{Upper bounds for $\mathcal{S}$-codes in $\mathbb{C}^{p,q}$}\label{sect:Cpq}

In this section, we present a complex counter part of the theory of distance sets in real hyperbolic spaces 
developed by Blokhuis \cite{Bl}. It is then used to prove Theorem \ref{theo-main-2}, 
see Theorem \ref{thm:main2} below.

\subsection{Harmonic polynomials}
Let $B$ be a bilinear form on $\mathbb{C}^d$ of inertia $(p,q)$ with $p+q=d$, defined by
\begin{align*}
B(\mathbf{x},\mathbf{y})=\overline{x_1}y_1+\cdots +\overline{x_p}y_p -\overline{x_{p+1}}y_{p+1}-\cdots-\overline{x_{d}}y_{d},
\end{align*}
where $\mathbf{x}=(x_1,\ldots,x_d),\mathbf{y}=(y_1,\ldots,y_d)$. 
The complex Euclidean space $\mathbb{C}^d$ equipped with the bilinear form $B$ is denoted by $\mathbb{C}^{p,q}$.

Let $\mathcal{R}=\mathbb{C}[z_1,\ldots,z_d,\overline{z_1},\ldots,\overline{z_d}]$ denote the algebra of polynomial functions 
in $z_1,\ldots,z_d$, $\overline{z_1},\ldots,\overline{z_d}$ on $\mathbb{C}^d$,
$\Hom(d,k,\ell)$ denote the space of the homogeneous polynomials of degree $k$ with respect to $z_1,\ldots,z_d$ 
and of degree $\ell$ with respect to $\overline{z_1},\ldots,\overline{z_d}$. 
Set 
\[
\beta:=\beta(\mathbf{z})=B(\mathbf{z},\mathbf{z})=z_1\overline{z_1}+\cdots+z_p\overline{z_p}-z_{p+1}\overline{z_{p+1}}-\cdots-z_d\overline{z_d},
\] 
and note that $\beta\in \Hom(d,1,1)$.

Write  $\partial_i=\frac{\partial}{\partial_{z_i}},\overline{\partial_i}=\frac{\partial}{\partial_{\overline{z_i}}}$.
For a given polynomial $f(z_1,\ldots,z_d)\in \mathcal{R}$, associate $f$ with the formal 
differential operator $f(\partial):=f(\partial_1,\ldots,\partial_p,-\partial_{p+1},\ldots,-\partial_d)$, 
where $\partial=(\partial_1,\ldots,\partial_p,-\partial_{p+1},\ldots,-\partial_d)$.
In particular, the {\bf Laplace operator} $\partial_\beta$ associated with the bilinear form $B$ is  defined as 
\begin{align*}
\partial_\beta:=\beta(\partial)=\frac{\partial^2}{\partial_1\overline{\partial_1}}+\cdots+\frac{\partial^2}{\partial_p\overline{\partial_p}}-\frac{\partial^2}{\partial_{p+1}\overline{\partial_{p+1}}}-\cdots-\frac{\partial^2}{\partial_{d}\overline{\partial_{d}}}.
\end{align*}

For $f,g \in \mathcal{R}$, we define an indefinite inner product $\langle f,g\rangle$ by
\begin{align*}
\langle f,g\rangle=(\underline{f}(\partial) g)(0),
\end{align*}
where $\underline{f}$ is defined to be the polynomial obtained by taking complex conjugate of the coefficients of $f$. 
Then the monomials $z^a\overline{z}^b:=z_1^{a_1}\cdots z_d^{a_d}\overline{z_1}^{b_1}\cdots \overline{z_d}^{b_d}$ 
form an orthogonal basis of $\mathcal{R}$ with the property that
\begin{align}
\langle z^a\overline{z}^b,z^a\overline{z}^b\rangle = (-1)^{\sum_{i=p+1}^d a_i+\sum_{i=p+1}^d b_i} \prod_{i=1}^d a_i! b_i!. \label{eq:mono}
\end{align}
In view of Eq.\ \eqref{eq:mono}, we obtain $\overline{\langle f,g\rangle}=\langle g,f\rangle$ and in turn 
\begin{align}
\langle f,gh\rangle=\overline{\langle gh, f\rangle}=\overline{(\underline{g}(\partial)\underline{h}(\partial)f)(0)}=\overline{\langle h, \underline{g}(\partial)f\rangle}=\langle \underline{g}(\partial)f,h\rangle. \label{eq:adj} 
\end{align}

Define $\Harm_B(d,k,\ell)$ to be the space of homogeneous polynomials $f$ of degree $k$ 
with respect to $z_1,\ldots,z_d$ and of degree $\ell$ with respect to $\overline{z_1},\ldots,\overline{z_d}$ 
satisfying $\partial_\beta f=0$, i.e.:
\begin{align*}
\Harm_B(d,k,\ell)=\Hom(d,k,\ell)\cap \ker \partial_\beta.
\end{align*}
Then Eq.\ \eqref{eq:mono} leads to the following decomposition:
\begin{align*}
\Hom(d,k,\ell)=\Hom^{+}(d,k,\ell)+\Hom^{-}(d,k,\ell),
\end{align*}
where
\begin{align*}
\Hom^{+}(d,k,\ell)&=\text{span}\{z^a\overline{z}^b\in \Hom(d,k,\ell) \mid \sum_{i=p+1}^d a_i+\sum_{i=p+1}^d b_i\equiv 0 \pmod{2}\},\\
\Hom^{-}(d,k,\ell)&=\text{span}\{z^a\overline{z}^b\in \Hom(d,k,\ell) \mid \sum_{i=p+1}^d a_i+\sum_{i=p+1}^d b_i\equiv 1 \pmod{2}\}.
\end{align*}
Then the space $\Harm_B(d,k,\ell)$ is decomposed into $\Harm_B^{+}(d,k,\ell)$ and $\Harm_B^{-}(d,k,\ell)$, where
\begin{align*}
\Harm_B^{+}(d,k,\ell)&=\Hom^+(d,k,\ell)\cap \Harm_B(d,k,\ell),\\
\Harm_B^{-}(d,k,\ell)&=\Hom^-(d,k,\ell)\cap \Harm_B(d,k,\ell).
\end{align*}

The dimensions of these spaces are determined in the following theorem. 

\begin{theorem}\label{thm:add-dim}
The following holds.
\begin{align*}
(i)& \dim \Hom(d,k,\ell)=\binom{d+k-1}{k}\binom{d+\ell-1}{\ell}.\\
(ii)& \dim \Harm_B(d,k,\ell)=\binom{d+k-1}{k}\binom{d+\ell-1}{\ell}-\binom{d+k-2}{k-1}\binom{d+\ell-2}{\ell-1}.\\
(iii)& \dim \Hom^{+}(d,k,\ell)=\sum_{i=0}^{\lfloor k/2\rfloor}\sum_{j=0}^
  {\lfloor \ell/2\rfloor}\binom{p+k-2i-1}{k-2i}\binom{q+2i-1}{2i}\binom{p+\ell-2j-1}{\ell-2j}\times \\
&\binom{q+2j-1}{2j}+\sum_{i=0}^{\lfloor (k-1)/2\rfloor}\sum_{j=0}^{\lfloor (\ell-1)/2\rfloor}\binom{p+k-2i-2}{k-2i-1}\binom{q+2i}{2i+1}\binom{q+2j}{2j+1}\times\\
&\binom{p+\ell-2j-2}{\ell-2j-1}.\\
(iv)& \dim \Hom^{-}(d,k,\ell)=\sum_{i=0}^{\lfloor k/2\rfloor}\sum_{j=0}^{\lfloor (\ell-1)/2\rfloor}\binom{p+k-2i-1}{k-2i}\binom{q+2i-1}{2i}\binom{q+2j}{2j+1}\times \\ \displaybreak[0]
& \binom{p+\ell-2j-2}{\ell-2j-1} +\sum_{i=0}^{\lfloor (k-1)/2\rfloor}\sum_{j=0}^{\lfloor \ell/2\rfloor}\binom{p+k-2i-2}{k-2i-1}\binom{q+2i}{2i+1}\binom{q+2j-1}{2j}\times\\
& \binom{p+\ell-2j-1}{\ell-2j}.\\
(v)& \dim \Harm_B^+(d,k,\ell)=\dim \Hom^{+}(d,k,\ell)-\dim \Hom^{+}(d,k-1,\ell-1). \\
(vi)& \dim \Harm_B^-(d,k,\ell)=\dim \Hom^{-}(d,k,\ell)-\dim \Hom^{-}(d,k-1,\ell-1).
\end{align*}
\end{theorem}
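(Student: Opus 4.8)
The plan is to separate the purely combinatorial identities $(i)$, $(iii)$, $(iv)$ from the harmonic ones $(ii)$, $(v)$, $(vi)$. For $(i)$ I would use that the monomials $z^a\overline{z}^b$ with $|a|=k$, $|b|=\ell$ form a basis of $\Hom(d,k,\ell)$: by stars and bars the number of $a\in\mathbb{N}^d$ with $|a|=k$ is $\binom{d+k-1}{k}$, the choices of $a$ and $b$ are independent, and so the dimension is the product. For $(iii)$ and $(iv)$ I would refine this count by the parity of $\sigma:=\sum_{i=p+1}^d(a_i+b_i)$. Splitting $a=(a',a'')$ and $b=(b',b'')$ into their first $p$ and last $q$ coordinates and summing over the admissible values of $|a''|$ and $|b''|$, each pair contributes a product of four independent stars-and-bars counts. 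For $\Hom^{+}$ the condition $\sigma\equiv 0$ forces $|a''|$ and $|b''|$ to have equal parity (both even, giving the first double sum; both odd, giving the second), which is exactly $(iii)$; for $\Hom^{-}$ the condition $\sigma\equiv 1$ forces opposite parities, giving the two sums in $(iv)$. The index ranges $\lfloor k/2\rfloor,\lfloor(k-1)/2\rfloor$, etc., arise precisely from the constraints $|a''|\le k$, $|b''|\le\ell$.

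For $(ii)$, $(v)$, $(vi)$ I would run the classical harmonic-decomposition argument, adapted to the indefinite form $\langle\cdot,\cdot\rangle$. The two operators are the multiplication map $M_\beta\colon\Hom(d,k-1,\ell-1)\to\Hom(d,k,\ell)$, $f\mapsto\beta f$, and the Laplacian $\partial_\beta\colon\Hom(d,k,\ell)\to\Hom(d,k-1,\ell-1)$. Since $\beta$ has real coefficients, $\underline{\beta}=\beta$ and $\beta(\partial)=\partial_\beta$, so Eq.~\eqref{eq:adj} specializes to $\langle f,\beta h\rangle=\langle\partial_\beta f,h\rangle$; that is, $M_\beta$ and $\partial_\beta$ are mutually adjoint with respect to $\langle\cdot,\cdot\rangle$. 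As $\ker\partial_\beta=\Harm_B(d,k,\ell)$ by definition, formula $(ii)$ will follow from rank--nullity once I show that $\partial_\beta$ is surjective.

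The crux, and the point where the indefinite signature must be handled carefully, is this surjectivity. Here Eq.~\eqref{eq:mono} is decisive: it exhibits the monomials as an orthogonal basis on which the form is diagonal with nonzero (though possibly negative) entries, so $\langle\cdot,\cdot\rangle$ restricts to a \emph{non-degenerate} form on each homogeneous piece $\Hom(d,k,\ell)$. Multiplication by the nonzero polynomial $\beta$ is injective, since the polynomial ring is an integral domain, so $M_\beta$ has full rank $\dim\Hom(d,k-1,\ell-1)$; and for linear maps between finite-dimensional spaces carrying non-degenerate forms a map and its adjoint have equal rank, whence $\partial_\beta$ has the same rank and is onto. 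This gives $(ii)$. For $(v)$ and $(vi)$ I would first note that $\beta\in\Hom^{+}(d,1,1)$, because each summand $z_i\overline{z_i}$ of $\beta$ contributes $0$ or $2$ to $\sigma$; consequently both $M_\beta$ and $\partial_\beta$ preserve the $\pm$-grading. Since $\Hom^{+}$ and $\Hom^{-}$ are spanned by disjoint sets of monomials, they are orthogonal and the form is non-degenerate on each, so the same adjoint-and-rank argument applied inside $\Hom^{\pm}$ yields surjectivity of $\partial_\beta\colon\Hom^{\pm}(d,k,\ell)\to\Hom^{\pm}(d,k-1,\ell-1)$, and hence $(v)$, $(vi)$, via $\Harm_B^{\pm}=\Hom^{\pm}\cap\ker\partial_\beta$. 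I expect the main obstacle to be exactly the verification of non-degeneracy of the indefinite form on the homogeneous pieces; once Eq.~\eqref{eq:mono} supplies it, the remainder is the standard Fischer-type decomposition.
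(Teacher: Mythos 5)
Your proof is correct, and parts $(i)$, $(iii)$, $(iv)$ coincide with the paper's argument: stars and bars for $(i)$, refined in $(iii)$ and $(iv)$ by the parity of $\sum_{i>p}(a_i+b_i)$ after splitting each exponent vector into its first $p$ and last $q$ coordinates. For $(ii)$, $(v)$, $(vi)$ you use the same two ingredients as the paper --- the adjointness relation \eqref{eq:adj} and the orthogonal monomial basis \eqref{eq:mono} --- but you organize the key step differently. The paper derives the Fischer-type decomposition $\Hom(d,k,\ell)=\Harm_B(d,k,\ell)\oplus\beta\Hom(d,k-1,\ell-1)$ by showing the two summands are orthogonal (via \eqref{eq:adj}) and then sandwiching dimensions against the rank--nullity bound \eqref{eq:hom2}; you instead prove directly that $\partial_\beta$ is surjective, by observing that \eqref{eq:mono} makes the indefinite form non-degenerate on each bigraded (and each $\pm$-graded) piece, that multiplication by $\beta$ is injective, and that a map and its adjoint between spaces carrying non-degenerate forms have equal rank. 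Your route is slightly more robust with respect to the indefinite signature: for an indefinite form, orthogonality of two subspaces does not by itself force their intersection to be trivial, so the directness claimed in \eqref{eq:hom1} really needs the injectivity of $M_\beta$ together with the surjectivity of $\partial_\beta$ (or an equivalent dimension count) to close --- which is exactly what your adjoint-rank argument supplies. Both proofs then obtain $(v)$ and $(vi)$ by running the same argument inside $\Hom^{\pm}$, using that $\beta\in\Hom^{+}(d,1,1)$ so that multiplication by $\beta$ and $\partial_\beta$ preserve the grading and that $\Hom^{+}\perp\Hom^{-}$ keeps the restricted forms non-degenerate. In short: same skeleton as the paper, with a cleaner and more careful treatment of the one step where the indefinite signature actually matters.
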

\begin{proof}
$(i)$ is easy to see, and $(ii)$ follows from the decomposition 
\begin{align}
\Hom(d,k,\ell)=\Harm_B(d,k,\ell)\oplus \beta\Hom(d,k-1,\ell-1), \label{eq:hom0}
\end{align}
which can be proved as follows. 
It is clear that $\Hom(d,k,\ell)\supseteq\Harm_B(d,k,\ell)+ \beta\Hom(d,k-1,\ell-1)$. 
For $f\in\Harm_B(d,k,\ell)$ and $g\in \Hom(d,k-1,\ell-1)$, it follows from Eq.\ \eqref{eq:adj} that 
\begin{align*}
\langle f,\beta g\rangle &=\langle \partial_\beta f, g\rangle=0, 
\end{align*}
which implies that $\Harm_B(d,k,\ell)\perp \beta\Hom(d,k-1,\ell-1)$. 
Therefore,  
\begin{align}
\Hom(d,k,\ell)\supseteq\Harm_B(d,k,\ell)\oplus \beta\Hom(d,k-1,\ell-1). \label{eq:hom1}
\end{align} 
Further, since the kernel of the linear map $\partial_\beta$ is $\Harm_B(d,k,\ell)$, it follows that 
\begin{align}
\nonumber
\dim \Hom(d,k,\ell)&=\dim \Harm_B(d,k,\ell)+\dim \textrm{Im}
(\partial_\beta)\\
&\leq \dim \Harm_B(d,k,\ell)+\dim \Hom(d,k-1,\ell-1). \label{eq:hom2}
\end{align} 
From Eqs. \eqref{eq:hom1} and \eqref{eq:hom2}, we conclude that Eq.\ \eqref{eq:hom0} follows.

For $z^a\overline{z}^b=z_1^{a_1}\cdots z_d^{a_d} \overline{z_1}^{b_1}\cdots \overline{z_d}^{b_d}$, 
write  $z^{a_+}=z_1^{a_1}\cdots z_p^{a_p}$, $z^{a_-}=z_{p+1}^{a_{p+1}}\cdots z_d^{a_d}$ and $\overline{z}^{b_+}=\overline{z_1}^{b_1}\cdots \overline{z_p}^{b_p}$, $\overline{z}^{b_-}=\overline{z_{p+1}}^{b_{p+1}}\cdots \overline{z_d}^{b_d}$. 
Then $z^a\overline{z}^b\in\Hom^+(d,k,\ell)$ if and only if one of the following holds:
\begin{itemize}
\item $z^{a_+}\overline{z}^{b_+}\in\Hom^+(p,k-2i-1,\ell-2j-1)$ and $z^{a_-}\overline{z}^{b_-}\in\Hom^+(q,2i+1,2j+1)$ for some $i,j$.
\item $z^{a_+}\overline{z}^{b_+}\in\Hom^+(p,k-2i,\ell-2j)$ and $z^{a_-}\overline{z}^{b_-}\in\Hom^+(q,2i,2j)$ for some $i,j$.
\end{itemize}
Therefore, it follows that
\begin{align*}
\dim \Hom^+(d,k,\ell) &=\sum_{i,j}\dim \Hom(p,k-2i-1,\ell-2j-1) \dim \Hom(q,2i+1,2j+1)\\
&+\sum_{i,j}\dim \Hom(p,k-2i,\ell-2j) \dim \Hom(q,2i,2j),
\end{align*}
which yields $(iii)$. 

The proof of $(iv)$ is similar. The result for $(v)$ and $(vi)$ follows from the decomposition 
\[
\Hom^{\epsilon}(d,k,\ell)=\Harm_B^{\epsilon}(d,k,\ell)\oplus \beta\Hom^{\epsilon}(d,k-1,\ell-1),
\] 
where $\epsilon\in \{+,-\}$.
\end{proof}

Further, we denote
\begin{align*}
\mu_{k,\ell}&:=\mu_{k,\ell}(p,q)=\dim \Harm_B^+(d,k,\ell),\\
\nu_{k,\ell}&:=\nu_{k,\ell}(p,q)=\dim \Harm_B^-(d,k,\ell).
\end{align*}

\subsection{Addition formula}
Given $\mathbf{x}\in \mathbb{C}^{p,q}$, the map $f\mapsto f(\mathbf{x})$ for 
$f\in\Harm_B(d,k,\ell)$ defines a function on the space $\Harm_B(d,k,\ell)$.
Then, by the Riesz representation theorem for a finite-dimensional nondegenerate bilinear form space, there exists a unique polynomial 
$r_\mathbf{x}\in\Harm_B(d,k,\ell)$ such that
\begin{align}\label{eq:add-repro}
\langle r_{\mathbf{x}},f\rangle= f(\mathbf{x}) \text{ for any }f\in\Harm_B(d,k,\ell).
\end{align}

Consider an ``orthonormal" basis $\{f_{k,\ell,i},g_{k,\ell,j} \mid i=1,\ldots,\mu_{k,\ell},j=1,\ldots,\nu_{k,\ell}\}$ 
of $\Harm_B(d,k,\ell)$ such that
\begin{align*}
\langle f_{k,\ell,i},f_{k,\ell,j}\rangle&=\delta_{ij},\\
\langle g_{k,\ell,i},g_{k,\ell,j}\rangle&=-\delta_{ij},\\
\langle f_{k,\ell,i},g_{k,\ell,j}\rangle&=0.
\end{align*}
Then the harmonic polynomial $r_{\mathbf{x}}$ is written as
\begin{align*}
r_{\mathbf{x}}=\sum_{i=1}^{\mu_{k,\ell}} \overline{\langle r_\mathbf{x},f_{k,\ell,i} \rangle} f_{k,\ell,i}-
\sum_{j=1}^{\nu_{k,\ell}} \overline{\langle r_\mathbf{x},g_{k,\ell,j} \rangle} g_{k,\ell,j},
\end{align*}
and, by Eq.\ \eqref{eq:add-repro},
\[
r_{\mathbf{x}}(\mathbf{y})=\sum_{i=1}^{\mu_{k,\ell}} \overline{f_{k,\ell,i}(\mathbf{x})} f_{k,\ell,i}(\mathbf{y})-
\sum_{j=1}^{\nu_{k,\ell}} \overline{g_{k,\ell,j}(\mathbf{x})} g_{k,\ell,j}(\mathbf{y}).
\]
Now we find out an explicit formula for  $r_{\mathbf{x}}(\mathbf{y})$. 

\begin{lemma}\label{lem:add-kl}
For $f(\mathbf{z}) \in \Hom(d,k,\ell)$, 
one has 
$\langle B(\mathbf{a},\mathbf{z})^k \overline{B(\mathbf{a},\mathbf{z})}^\ell , f(\mathbf{z})\rangle=k!\ell! f(\mathbf{a})$.
\end{lemma}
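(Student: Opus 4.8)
The plan is to unwind the definition of the pairing and to identify the differential operator attached to $g_0:=B(\mathbf{a},\mathbf{z})^k\overline{B(\mathbf{a},\mathbf{z})}^\ell$. Since $B(\mathbf{a},\mathbf{z})$ is homogeneous of bidegree $(1,0)$ and its conjugate of bidegree $(0,1)$, we have $g_0\in\Hom(d,k,\ell)$, so both arguments of $\langle g_0,f\rangle$ lie in the same bihomogeneous space. By definition $\langle g_0,f\rangle=(\underline{g_0}(\partial)f)(0)$, and the whole statement reduces to computing the operator $\underline{g_0}(\partial)$ and evaluating it on $f$.

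First I would write $B(\mathbf{a},\mathbf{z})=\sum_{i=1}^d\epsilon_i\overline{a_i}z_i$, where $\epsilon_i=1$ for $i\le p$ and $\epsilon_i=-1$ for $i>p$. Passing to $\underline{g_0}$ conjugates the coefficients (turning each $\overline{a_i}$ into $a_i$ and each $a_i$ into $\overline{a_i}$), while the substitution rule defining $h\mapsto h(\partial)$ replaces each $z_i$ (resp.\ $\overline{z_i}$) by $\epsilon_i\partial_i$ (resp.\ $\epsilon_i\overline{\partial_i}$). The key observation is that these two sources of signs combine into a factor $\epsilon_i^2=1$ for every $i$, so all signs cancel and
\[
\underline{g_0}(\partial)=D_{\mathbf{a}}^{k}\,\overline{D_{\mathbf{a}}}^{\ell},\qquad D_{\mathbf{a}}:=\sum_{i=1}^d a_i\partial_i,\quad \overline{D_{\mathbf{a}}}:=\sum_{i=1}^d\overline{a_i}\,\overline{\partial_i}.
\]
This is exactly the point where the indefinite signature of $B$ becomes invisible, and I expect this sign bookkeeping to be the only genuinely delicate step. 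Alternatively, one can peel the factors of $B$ and $\overline{B}$ off one at a time via the adjunction formula \eqref{eq:adj}, each factor being converted into $D_{\mathbf{a}}$ or $\overline{D_{\mathbf{a}}}$ acting on $f$, arriving at the same operator.

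It then remains to evaluate $(D_{\mathbf{a}}^{k}\overline{D_{\mathbf{a}}}^{\ell}f)(0)$. By linearity it suffices to treat a monomial $f=z^{\alpha}\overline{z}^{\beta}$ with $|\alpha|=k$ and $|\beta|=\ell$. Since $D_{\mathbf{a}}$ acts only on the holomorphic variables and $\overline{D_{\mathbf{a}}}$ only on the antiholomorphic ones, and the two commute, the multinomial theorem together with the identity $\partial^{\gamma}z^{\alpha}=\alpha!\,\delta_{\gamma,\alpha}$ (valid since $|\gamma|=|\alpha|$ forces $\gamma=\alpha$) gives $D_{\mathbf{a}}^{k}z^{\alpha}=k!\,a^{\alpha}$ and $\overline{D_{\mathbf{a}}}^{\ell}\overline{z}^{\beta}=\ell!\,\overline{a}^{\beta}$. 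Both are constants, so evaluation at $0$ is harmless, and we obtain
\[
\langle g_0,f\rangle=(D_{\mathbf{a}}^{k}\overline{D_{\mathbf{a}}}^{\ell}f)(0)=k!\,\ell!\,a^{\alpha}\overline{a}^{\beta}=k!\,\ell!\,f(\mathbf{a}),
\]
which is the asserted identity; the general case follows by summing over monomials.
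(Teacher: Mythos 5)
Your proof is correct. The sign bookkeeping is the right place to focus: with the paper's convention $z_i\mapsto\epsilon_i\partial_i$, $\overline{z_i}\mapsto\epsilon_i\overline{\partial_i}$, the $\epsilon_i$ coming from the coefficients of $B(\mathbf{a},\cdot)$ and the $\epsilon_i$ from the substitution indeed cancel, so $\underline{g_0}(\partial)=D_{\mathbf{a}}^k\overline{D_{\mathbf{a}}}^{\ell}$ exactly as you claim (this is consistent with Eq.~\eqref{eq:mono}), and the multinomial evaluation on monomials is then immediate. Your route differs from the paper's in its second half: the paper first reduces to the cases $k=0$ or $\ell=0$ and then argues by induction on $k$, peeling off one factor of $B(\mathbf{a},\mathbf{z})$ at a time via the adjunction formula \eqref{eq:adj} and finishing each step with Euler's identity $\sum_i z_i\partial_i f=kf$; this is precisely the ``alternative'' you mention in passing. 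Your version computes the operator in closed form and evaluates it on a monomial basis via $\partial^{\gamma}z^{\alpha}=\alpha!\,\delta_{\gamma,\alpha}$, which replaces the induction and Euler's identity by the multinomial theorem and, as a bonus, treats the mixed bidegree $(k,\ell)$ directly rather than through the paper's (rather terse) reduction to the pure cases. Both arguments are sound; the paper's reuses machinery (Eq.~\eqref{eq:adj}) already set up for Theorem~\ref{thm:add-dim}, while yours is more self-contained and makes the cancellation of the indefinite signature completely explicit.
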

\begin{proof}
It suffices to show the result in the cases $k=0$ or $\ell=0$.
Here we consider the case $\ell=0$ by using  induction on $k$.
For $k=1$, write $\mathbf{a}=(a_1,\ldots,a_d)$ and $f=b_1z_1+\cdots+b_d z_d$.
Then
\begin{align*}
\langle B(\mathbf{a},\mathbf{z}), f \rangle &=\langle \sum_{i=1}^p \overline{a_i}z_i-\sum_{i=p+1}^d \overline{a_i}z_i, 
b_1z_1+\cdots+b_d z_d \rangle \\
&=\Big(\big(\sum_{i=1}^p a_i \partial_i-\sum_{i=p+1}^d a_i (-\partial_i)\big)\big(b_1z_1+\cdots+b_d z_d\big)\Big)(0)\\
&=\sum_{i=1}^p a_i b_i+\sum_{i=p+1}^d a_i b_i=f(\mathbf{a}).
\end{align*}
Suppose that $k>1$. 
By using $\langle gh,f \rangle=\langle g,\underline{h}(\partial)f \rangle$ and 
$\sum_{i=1}^{d} z_i \partial_i f(\mathbf{z})=kf(\mathbf{z})$, 
we have
\begin{align*}
\langle B(\mathbf{a},\mathbf{z})^k, f \rangle &=\langle B(\mathbf{a},\mathbf{z})^{k-1}, 
\big(\sum_{i=1}^p a_i \partial_i-\sum_{i=p+1}^d a_i (-\partial_i)\big)f \rangle\\
&=(k-1)! \big(\sum_{i=1}^d a_i \partial_i f\big)(\mathbf{a})\\
&=(k-1)!kf(\mathbf{a})\\
&=k!f(\mathbf{a}), 
\end{align*}
which completes the proof. \qedhere
\end{proof}

\begin{lemma}\label{lem:add-rec}
For any $f\in \Hom(d,k,\ell)$, the following holds:
\[
\partial_\beta(\beta^i \partial_\beta^i f)=\beta^i \partial_\beta^{i+1} f+i(d+k+\ell-i-1)\beta^{i-1} \partial_\beta^i f.
\]
\end{lemma}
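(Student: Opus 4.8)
The plan is to deduce the identity from a single commutation relation between the Laplace operator $\partial_\beta$ and multiplication by $\beta$, and then to iterate it. The relation I aim for is: for every $g\in\Hom(d,a,b)$,
\[
\partial_\beta(\beta g)=\beta\,\partial_\beta g+(d+a+b)\,g .
\]
Granting this, the lemma is pure bookkeeping, as explained below.

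To prove this commutation relation I would compute coordinatewise. Writing $\beta=\sum_{i=1}^{d}\epsilon_i z_i\overline{z_i}$ and $\partial_\beta=\sum_{i=1}^{d}\epsilon_i\partial_i\overline{\partial_i}$ with $\epsilon_i=1$ for $i\le p$ and $\epsilon_i=-1$ for $i>p$, the product rule together with $\partial_i z_j=\delta_{ij}$, $\overline{\partial_i}\,\overline{z_j}=\delta_{ij}$ and the commuting of the holomorphic symbols with the antiholomorphic ones yields, for each pair $(i,j)$ and each $h$,
\[
\partial_i\overline{\partial_i}(z_j\overline{z_j}h)-z_j\overline{z_j}\,\partial_i\overline{\partial_i}h=\delta_{ij}\bigl(h+z_i\partial_i h+\overline{z_i}\,\overline{\partial_i}h\bigr).
\]
Multiplying by $\epsilon_i\epsilon_j$, summing over $i,j$, and using $\epsilon_i^2=1$, I obtain the operator identity $\partial_\beta\beta-\beta\partial_\beta=d+E+\overline{E}$, where $E=\sum_i z_i\partial_i$ and $\overline{E}=\sum_i\overline{z_i}\,\overline{\partial_i}$ are the two Euler operators. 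Since $E$ and $\overline{E}$ act as the scalars $a$ and $b$ on $\Hom(d,a,b)$, this is exactly the displayed relation.

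Next I would iterate. As $\beta^{\,j}h\in\Hom(d,a+j,b+j)$, applying the relation with $g=\beta^{\,i-1}h$ gives the recursion $\partial_\beta(\beta^i h)=\beta\,\partial_\beta(\beta^{\,i-1}h)+(d+a+b+2(i-1))\beta^{\,i-1}h$, and an induction on $i$ (with the telescoping sum $\sum_{j=0}^{i-1}(d+a+b+2j)=i(d+a+b+i-1)$) produces
\[
\partial_\beta(\beta^i h)=\beta^i\partial_\beta h+i(d+a+b+i-1)\,\beta^{\,i-1}h \qquad (h\in\Hom(d,a,b)).
\]
Finally I substitute $h:=\partial_\beta^{\,i}f$, which lies in $\Hom(d,k-i,\ell-i)$ for $f\in\Hom(d,k,\ell)$; then $a=k-i$, $b=\ell-i$, so $d+a+b+i-1=d+k+\ell-i-1$ and $\partial_\beta h=\partial_\beta^{\,i+1}f$, turning the last display into the asserted identity.

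The only step requiring care is the base relation, and within it the handling of the indefinite signature: the signs enter solely through the product $\epsilon_i\epsilon_j$, which survives only on the diagonal $i=j$ after the factor $\delta_{ij}$ and there collapses to $\epsilon_i^2=1$. Consequently the signature disappears from the final operator $d+E+\overline{E}$, and the computation proceeds exactly as in the positive-definite case. Everything after that is a routine induction.
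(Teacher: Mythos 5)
Your proof is correct, and it reaches the identity by a somewhat different route than the paper. The paper applies the second--order Leibniz rule $\partial_{\beta}(gh)=(\partial_\beta g)h+g(\partial_\beta h)+\sum_{j}\big((\partial_j g)(\overline{\partial_j}h)+(\overline{\partial_j} g)(\partial_j h)\big)$ directly with $g=\beta^i$ and $h=\partial_\beta^i f$, which requires computing $\partial_\beta\beta^i=i(d+i-1)\beta^{i-1}$ for general $i$ and evaluating the cross terms as $i(k+\ell-2i)\beta^{i-1}\partial_\beta^i f$ via Euler's identity; the two contributions add to $i(d+k+\ell-i-1)$. You instead isolate the single first--order commutator $\partial_\beta\beta-\beta\partial_\beta=d+E+\overline{E}$ and obtain the general--$i$ statement by induction (equivalently, the telescoping sum $\sum_{j=0}^{i-1}(d+a+b+2j)=i(d+a+b+i-1)$), then substitute $h=\partial_\beta^i f\in\Hom(d,k-i,\ell-i)$. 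Both arguments rest on the same two ingredients --- a coordinatewise product--rule computation and Euler's identity on bihomogeneous polynomials --- but yours packages them into one clean commutation relation and so never has to differentiate $\beta^i$ for $i>1$, at the cost of an (easy) induction; the paper's version is a one--shot computation. Your remark that the signature enters only through $\epsilon_i\epsilon_j\delta_{ij}=\epsilon_i^2=1$, so that the indefinite case collapses to the definite one, is exactly the mechanism behind the paper's computation of $\partial_\beta\beta^i$ as well. All the bookkeeping in your argument checks out.
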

\begin{proof}
By using 
$\partial_{\beta}(gh)=(\partial_\beta g)h+g(\partial_\beta h)+\sum_{i=1}^d \big((\partial_i g)(\overline{\partial_i}h)+
(\overline{\partial_i} g)(\partial_i h)\big)$,  
we obtain
\begin{align}\label{add-partial}
\partial_\beta(\beta^i \partial_\beta^i f)=(\partial_\beta\beta^i) (\partial_\beta^i f)+\beta^i \partial_\beta^{i+1} f+
\sum_{j=1}^d \big((\partial_j \beta^i)\overline{\partial_j}\partial_\beta^i f+(\overline{\partial_j}\beta^i)\partial_j \partial_\beta^i f\big).
\end{align}

Let us calculate the first and third terms in the right-hand side of Eq.\ \eqref{add-partial}.
Regarding the first term in Eq.\ \eqref{add-partial}, we have 
\begin{align*}
\partial_\beta\beta^i&=(\sum_{j=1}^p \partial_j\overline{\partial_j}- \sum_{j=p+1}^d\partial_j\overline{\partial_j})\beta^i\\
&=\sum_{j=1}^p i\partial_j(z_j \beta^{i-1})- \sum_{j=p+1}^di\partial_j((-z_j)\beta^{i-1})\\
&=i\sum_{j=1}^d \partial_j(z_j\beta^{i-1})\\
&=i\sum_{j=1}^d \beta^{i-1}+i(\sum_{j=1}^p (i-1)z_j\overline{z_j}+ \sum_{j=p+1}^d(i-1) z_j(-\overline{z_j}))\beta^{i-2}\\
&=id \beta^{i-1}+i(i-1)(\sum_{j=1}^p z_j\overline{z_j}- \sum_{j=p+1}^d z_j\overline{z_j})\beta^{i-2}\\
&=i(d+i-1)\beta^{i-1},
\end{align*}
whence the first term is $i(d+i-1)\beta^{i-1}\partial_\beta^i f$.
For the third term, by using that
\[
\sum_{j=1}^d x_j\frac{\partial}{\partial x_j}g(x_1,\ldots,x_d)=kg(x_1,\ldots,x_d)
\]
for any homogeneous polynomial $g$ of degree $k$ with respect to the variables $x_1$, $\ldots$, $x_d$, 
and by $\partial_\beta^i f\in\Hom(d,k-i,\ell-i)$, 
we obtain 
\begin{align*}
\sum_{j=1}^d ((\partial_j \beta^i)\overline{\partial_j}\partial_\beta^i f+(\overline{\partial_j}\beta^i)\partial_j \partial_\beta^i f)&=\sum_{j=1}^d (i\overline{z_j}\beta^{i-1}\overline{\partial_j}\partial_\beta^i f+i z_j\beta^{i-1}\partial_j \partial_\beta^i f)\\
&=i\beta^{i-1}\sum_{j=1}^d (z_j\partial_j \partial_\beta^i f+\overline{z_j}\overline{\partial_j}\partial_\beta^i f)\\
&=i(k+\ell-2i) \beta^{i-1}\partial_\beta^i f. \end{align*}
Substituting these into Eq.\ \eqref{add-partial} yields the desired result.
\end{proof}

Let $\varphi=\varphi_{k,\ell}$ be a map from $\Hom(d,k,\ell)$ to $\Hom(d,k,\ell)$ given by
\begin{align*}
\varphi(f)=\sum_{i=0}^{\min\{k,\ell\}} a_i \beta^i\partial_\beta^i f \qquad (f\in \Hom(d,k,\ell))
\end{align*}
with $a_0=1$ and $a_i+(i+1)(d+k+\ell-i-2)a_{i+1}=0$ for $i>0$, which implies 
\begin{equation}\label{eq-ai}
  a_i=\frac{(-1)^i(d+k+\ell-i-2)!}{i!(d+k+\ell-2)!}.
\end{equation}

From $a_0=1$, it follows that $\varphi(f)=f$ for $f\in\Harm_B(d,k,\ell)$.
By Lemma~\ref{lem:add-rec} and the recurrence for $a_i$, it is easy to see that $\partial_\beta(\varphi(f))=0$.
Therefore, it turns out that the map $\varphi$ is a projection onto $\Harm_B(d,k,\ell)$.

By Eq.\ \eqref{eq:add-repro} and Lemma~\ref{lem:add-kl}, it follows that 
$r_{\mathbf{x}}=\frac{1}{k!\ell!}\varphi_{k,\ell}\big(B(\mathbf{x},\cdot)^k \overline{B(\mathbf{x},\cdot)}^\ell\big)$.
Then, by Eq.\ \eqref{eq-ai} and 
\[\partial_\beta^i B(\mathbf{x},\cdot)^k \overline{B(\mathbf{x},\cdot)}^\ell=\frac{k!\ell!}{(k-i)!(\ell-i)!}\beta(\mathbf{x})^i B(\mathbf{x},\cdot)^{k-i} \overline{B(\mathbf{x},\cdot)}^{\ell-i},\] 
we have
\begin{align*}
r_\mathbf{x}&=\frac{1}{k!\ell!}\varphi_{k,\ell}(B(\mathbf{x},\cdot)^k \overline{B(\mathbf{x},\cdot)}^\ell)\\
&=\frac{1}{k!\ell!}\sum_{i=0}^{\min\{k,\ell\}}a_i\beta(\cdot)^i \partial_\beta^i \big(B(\mathbf{x},\cdot)^k \overline{B(\mathbf{x},\cdot)}^\ell\big)\\
&=\sum_{i=0}^{\min\{k,\ell\}}\frac{(-1)^i(d+k+\ell-i-2)!}{i!(k-i)!(\ell-i)!(d+k+\ell-2)!}\beta(\mathbf{x})^i\beta(\cdot)^iB(\mathbf{x},\cdot)^{k-i} \overline{B(\mathbf{x},\cdot)}^{\ell-i}\\
&=(\beta(\mathbf{x})\beta(\cdot))^{(k+\ell)/2}\sum_{i=0}^{\min\{k,\ell\}}\frac{(-1)^i(d+k+\ell-i-2)!}{i!(k-i)!(\ell-i)!(d+k+\ell-2)!}
\frac{B(\mathbf{x},\cdot)^{k-i}\overline{B(\mathbf{x},\cdot)}^{\ell-i}}{\left(\sqrt{\beta(\mathbf{x})\beta(\cdot)}\right)^{k+\ell-2i}}.
\end{align*}


Therefore, we have the following theorem.
\begin{theorem}\label{thm:add-form}
For $\mathbf{x},\mathbf{y}\in\mathbb{C}^{p,q}$ with $\beta(\mathbf{x})>0,\beta(\mathbf{y})> 0$, 
\begin{align*}
\gamma^d_{k,\ell}(\beta(\mathbf{x})\beta(\mathbf{y}))^{\frac{k+\ell}{2}}g^d_{k,\ell}\left(\frac{B(\mathbf{x},\mathbf{y})}
{\sqrt{\beta(\mathbf{x})\beta(\mathbf{y})}}\right)=
\sum_{i=1}^{\mu_{k,\ell}}\overline{f_{k,\ell,i}(\mathbf{x})}f_{k,\ell,i}(\mathbf{y})-
\sum_{i=1}^{\nu_{k,\ell}}\overline{g_{k,\ell,i}(\mathbf{x})}g_{k,\ell,i}(\mathbf{y}),
\end{align*}
where 
$g^d_{k,\ell}$ is defined by Eq.\ \eqref{eq-Jacobi},
\begin{align*}
\gamma^d_{k,\ell}&=\frac{(d+k-2)!(d+\ell-2)!}{m_{k,\ell}^d(d-2)!k!\ell!(d+k+\ell-2)!},
\end{align*}
and $m_{k,\ell}^d$ is defined by Eq.\ \eqref{eq:dim}.
\end{theorem}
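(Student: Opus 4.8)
The plan is to show that both sides of the claimed identity coincide with the reproducing kernel $r_{\mathbf{x}}(\mathbf{y})$ of $\Harm_B(d,k,\ell)$ with respect to the indefinite inner product $\langle\cdot,\cdot\rangle$, so that the theorem reduces to matching two expressions for $r_{\mathbf{x}}$ that have already been established above. The right-hand side is, by construction, exactly the expansion of $r_{\mathbf{x}}(\mathbf{y})$ in the pseudo-orthonormal basis $\{f_{k,\ell,i},g_{k,\ell,j}\}$, as recorded in the display immediately following Eq.\ \eqref{eq:add-repro}; the signature $\langle f_{k,\ell,i},f_{k,\ell,i}\rangle=+1$, $\langle g_{k,\ell,j},g_{k,\ell,j}\rangle=-1$ accounts for the minus sign in front of the $g$-sum. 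Thus it remains only to identify the closed form of $r_{\mathbf{x}}$ with the left-hand side.

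For this I would take the explicit formula for $r_{\mathbf{x}}$ obtained from $r_{\mathbf{x}}=\frac{1}{k!\ell!}\varphi_{k,\ell}\big(B(\mathbf{x},\cdot)^k\overline{B(\mathbf{x},\cdot)}^\ell\big)$ via Lemma~\ref{lem:add-kl}, Lemma~\ref{lem:add-rec} and the coefficients $a_i$ of Eq.\ \eqref{eq-ai}, namely the last display before the theorem. Evaluating at $\mathbf{y}$ and invoking the hypothesis $\beta(\mathbf{x}),\beta(\mathbf{y})>0$ (which makes $\sqrt{\beta(\mathbf{x})\beta(\mathbf{y})}$ a well-defined positive real and hence renders the argument of $g^d_{k,\ell}$ and each power of the square root unambiguous), I would factor out $(\beta(\mathbf{x})\beta(\mathbf{y}))^{(k+\ell)/2}$ and substitute $x=B(\mathbf{x},\mathbf{y})/\sqrt{\beta(\mathbf{x})\beta(\mathbf{y})}$, so that the $i$-th summand becomes a constant multiple of $x^{k-i}\overline{x}^{\ell-i}$.

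The final step is a direct comparison of the resulting sum with the definition of the Jacobi polynomial $g^d_{k,\ell}$ in Eq.\ \eqref{eq-Jacobi}: term by term the two agree up to the single overall scalar $\frac{(d+k-2)!(d+\ell-2)!}{m^d_{k,\ell}(d-2)!k!\ell!(d+k+\ell-2)!}$, which is precisely the constant $\gamma^d_{k,\ell}$. Reading off this factor produces the left-hand side and completes the identification $r_{\mathbf{x}}(\mathbf{y})=\gamma^d_{k,\ell}(\beta(\mathbf{x})\beta(\mathbf{y}))^{(k+\ell)/2}g^d_{k,\ell}\big(B(\mathbf{x},\mathbf{y})/\sqrt{\beta(\mathbf{x})\beta(\mathbf{y})}\big)$.

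I do not anticipate a genuine obstacle at this stage: the substantive work, namely the construction of the projection $\varphi_{k,\ell}$ onto $\Harm_B(d,k,\ell)$ and the resulting closed form of the reproducing kernel, has already been carried out, so the proof is essentially a coefficient comparison. The only points requiring care are the bookkeeping of the factorial constant matching $\gamma^d_{k,\ell}$ against the normalization of $g^d_{k,\ell}$, and the consistent use of the positive square root, which is exactly what the sign hypothesis $\beta(\mathbf{x}),\beta(\mathbf{y})>0$ guarantees.
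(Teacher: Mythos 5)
Your proposal is correct and follows essentially the same route as the paper: the right-hand side is the expansion of the reproducing kernel $r_{\mathbf{x}}(\mathbf{y})$ in the pseudo-orthonormal basis, and the left-hand side is obtained from the closed form $r_{\mathbf{x}}=\frac{1}{k!\ell!}\varphi_{k,\ell}\big(B(\mathbf{x},\cdot)^k\overline{B(\mathbf{x},\cdot)}^\ell\big)$ computed via Lemmas \ref{lem:add-kl} and \ref{lem:add-rec} and Eq.\ \eqref{eq-ai}, after factoring out $(\beta(\mathbf{x})\beta(\mathbf{y}))^{(k+\ell)/2}$ and matching coefficients against Eq.\ \eqref{eq-Jacobi} to identify the constant $\gamma^d_{k,\ell}$. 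This is precisely the derivation the paper carries out in the displays preceding the theorem, so no further comment is needed.
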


\subsection{An application to Hermitian adjacency matrices}

Let $X$ be a finite set of points of $\mathbb{C}^{p,q}$, $p+q=d$, such that $B(\mathbf{x},\mathbf{x})=1$ for any $\mathbf{x}\in X$. 
Put $A(X):=\{B(\mathbf{x},\mathbf{y})\mid \mathbf{x},\mathbf{y}\in X, \mathbf{x}\neq \mathbf{y}\}$ and $n=|X|$. 
The set $X$ is an $\mathcal{S}$-\textbf{code} if there exists a two-variate polynomial 
$\phi(z)=\sum_{(k,\ell)\in \mathcal{S}}a_{k,\ell}g_{k,\ell}(z)$ (with $g_{k,\ell}=g^d_{k,\ell}$ defined by Eq.\ \eqref{eq-Jacobi}) 
in $z$ and $\bar{z}$
such that 
$a_{k,\ell}\in\mathbb{R}$  for all $(k,\ell)\in\mathcal{S}$, and $\phi(\alpha)=0$ for any $\alpha \in A(X)$, and $\phi(1)=1$. 
For an $\mathcal{S}$-code $X$, such a polynomial $\phi(x)$ is called an \textbf{annihilator polynomial} of $X$.

\begin{example}\label{ex:Scode}
Let $X$ be an $\mathcal{S}$-code of $\mathbb{C}^{p,q}$.
Define $\phi(z)=\prod_{\alpha\in A(X)}\frac{z-\alpha}{c-\alpha}$.
Then $X$ is an $\mathcal{S}$-code,  $\mathcal{S}=\{(k,0)\mid k=0,1,\ldots,|A(X)|\}$, with annihilator polynomial $\phi(z)$.
\end{example}
\begin{example}
Let $X$ be a finite set of points with $A(X)=\{\alpha,\overline{\alpha}\}$.
Define $\phi(z)=z+\overline{z}-\alpha-\overline{\alpha}$.
Then $X$ is an $\mathcal{S}$-code, where $\mathcal{S}=\{(0,0),(1,0),(0,1)\}$, with  annihilator polynomial $\phi(z)$.
With Example~\ref{ex:Scode}, it shows that the symbol $\mathcal{S}$ such that $X$ is an $\mathcal{S}$-code is not uniquely determined.
\end{example}

Define the matrices $F_{k,\ell}\in \mathbb{C}^{n\times \mu_{k,\ell}}$, 
$G_{k,\ell}\in \mathbb{C}^{n\times \nu_{k,\ell}}$, and 
$A_\alpha\in \mathbb{C}^{n\times n}$ 
with entries given by
\begin{align*}
\big(F_{k,\ell}\big)_{\mathbf{x},i}&=(f_{k,\ell,i}(\mathbf{x})),\\ 
\big(G_{k,\ell}\big)_{\mathbf{x},i}&=(g_{k,\ell,i}(\mathbf{x})),\\ 
\big(A_{\alpha}\big)_{\mathbf{x},\mathbf{y}}&=(\rho_{\alpha}(\mathbf{x},\mathbf{y})),
\end{align*}
where $\alpha \in A(X)\cup\{1\}$ and $\rho_{\alpha}(\mathbf{x},\mathbf{y})=
\begin{cases}
1 & \text{ if }B(\mathbf{x},\mathbf{y})=\alpha,\\
0 & \text{ otherwise}.
\end{cases}$.

Then Theorem~\ref{thm:add-form} reads as
\begin{align*}
F_{k,\ell}F_{k,\ell}^*-G_{k,\ell}G_{k,\ell}^*=\sum_{\alpha\in A(X)\cup\{1\}}g_{k,\ell}(\alpha)A_\alpha.
\end{align*}

Let $X$ be an $\mathcal{S}$-code with 
annihilator polynomial $\phi(z)=\sum_{(k,\ell)\in \mathcal{S}}a_{k,\ell}g_{k,\ell}(z)$.
By $\phi(1)=1$ and $\phi(\alpha)=0$ for any $\alpha\in A(X)$, we obtain that 
\begin{align*}
\sum_{(k,\ell)\in \mathcal{S}} a_{k,\ell}(F_{k,\ell}F_{k,\ell}^*-G_{k,\ell}G_{k,\ell}^*)&=
\sum_{\alpha\in A(X)\cup\{1\}}\big(\sum_{(k,\ell)\in \mathcal{S}} a_{k,\ell} g_{k,\ell}(\alpha)\big)A_{\alpha}\\
&=
\sum_{\alpha\in A(X)\cup\{1\}}\phi(\alpha)A_{\alpha}=I_n,
\end{align*}
which is also written as
\begin{align*}
H \Big(\bigoplus_{(k,\ell)\in \mathcal{S}} a_{k,\ell} I_{\mu_{k,\ell},\nu_{k,\ell}}\Big)H^*=I_n,
\end{align*}
where the matrix $H$ is defined to be $H=(F_{k,\ell}, G_{k,\ell})_{(k,\ell)\in \mathcal{S}}$.
Now we have the following lemma.

\begin{lemma}[{\cite[Lemma~2.7.1]{Bl}}]
Let $I_{s,t}=\mathrm{diag}(1^s,(-1)^t,0^{m-s-t})$ and $S$ be an $n\times m$ matrix.
If $SI_{s,t}S^*=I_n$, then $n\leq s$.
\end{lemma}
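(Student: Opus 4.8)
The plan is to split the middle matrix $I_{s,t}$ into its positive and negative parts and then compare the ranks of the two sides of the resulting identity. First I would partition the columns of $S$ according to the sign of the corresponding diagonal entry of $I_{s,t}$: let $S_+\in\mathbb{C}^{n\times s}$ be the submatrix of those columns of $S$ that meet a $+1$ on the diagonal, and $S_-\in\mathbb{C}^{n\times t}$ the submatrix of columns meeting a $-1$; the remaining $m-s-t$ columns are annihilated by the zero entries and play no role. With this splitting the hypothesis $SI_{s,t}S^*=I_n$ becomes the identity
\[
S_+S_+^* - S_-S_-^* = I_n.
\]

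Next I would rearrange this as $S_+S_+^*=I_n+S_-S_-^*$ and analyze the rank of each side. The right-hand side is the sum of the positive definite matrix $I_n$ and the positive semidefinite matrix $S_-S_-^*$, hence is itself positive definite and so has rank exactly $n$. The left-hand side satisfies $\mathrm{rank}(S_+S_+^*)=\mathrm{rank}(S_+)\le s$, since $S_+$ has only $s$ columns; here I use the standard fact over $\mathbb{C}$ that $\ker(S_+S_+^*)=\ker(S_+^*)$ (because $S_+^*x=0\iff x^*S_+S_+^*x=0$), whence $S_+$ and its Gram matrix $S_+S_+^*$ have equal rank. Comparing the ranks of these two equal matrices yields $n=\mathrm{rank}(S_+S_+^*)\le s$, as claimed.

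The argument is short, so there is no serious obstacle; the only point requiring care is the bookkeeping of inertia. One must check that $S_-S_-^*$ is genuinely positive semidefinite, so that adding it to $I_n$ cannot drop the rank below $n$, and that the zero block of $I_{s,t}$ contributes nothing, so that the entire positive contribution is carried by the $s$ columns of $S_+$. Conceptually this is a manifestation of Sylvester's law of inertia: the Hermitian matrix $SI_{s,t}S^*$ cannot have positive index of inertia exceeding the positive index $s$ of $I_{s,t}$, whereas $I_n$ has positive index $n$, forcing $n\le s$.
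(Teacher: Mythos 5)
Your proof is correct and complete. Note that the paper itself does not prove this lemma at all --- it is quoted from Blokhuis (Lemma~2.7.1 of the cited thesis) without proof --- so there is no in-paper argument to compare against; your derivation is the standard one and is exactly what the inertia bound requires. Splitting $S$ by the sign of the corresponding diagonal entry turns the hypothesis into $S_+S_+^*-S_-S_-^*=I_n$, and the two facts you invoke are both sound: $S_-S_-^*$ is positive semidefinite (so $I_n+S_-S_-^*$ is positive definite of rank $n$), and $\mathrm{rank}(S_+S_+^*)=\mathrm{rank}(S_+)\leq s$ over $\mathbb{C}$ because $\ker(S_+S_+^*)=\ker(S_+^*)$. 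Comparing the ranks of the two equal matrices gives $n\leq s$, as required.
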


Therefore, we have the following theorem.
\begin{theorem}\label{thm:add-upperbound}
Let $X$ be an $\mathcal{S}$-code in $\mathbb{C}^{p,q}$ and 
$A(X)=\{B(\mathbf{x},\mathbf{y})\mid \mathbf{x},\mathbf{y}\in X, \mathbf{x}\neq \mathbf{y}\}$.
Assume that $B(\mathbf{x},\mathbf{x})=1$ for any $\mathbf{x}\in X$.
Let $\phi(z)=\sum_{(k,\ell)\in \mathcal{S}}a_{k,\ell}g_{k,\ell}(z)$ be an annihilator polynomial of $X$.
Then
$|X|\leq \sum_{(k,\ell)\in \mathcal{S}} \sigma_{k,\ell}$ holds, where
\begin{align*}
\sigma_{k,\ell}=\begin{cases}
\mu_{k,\ell} & \text{ if } a_{k,\ell}>0,\\
\nu_{k,\ell} & \text{ if } a_{k,\ell}<0,\\
0 & \text{ if } a_{k,\ell}=0.
\end{cases}
\end{align*}
\end{theorem}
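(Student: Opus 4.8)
The plan is to read the conclusion off directly from the matrix identity
$$H\Big(\bigoplus_{(k,\ell)\in\mathcal{S}} a_{k,\ell}\, I_{\mu_{k,\ell},\nu_{k,\ell}}\Big)H^* = I_n$$
established immediately above the statement, by combining it with the cited lemma of Blokhuis (if $S\,I_{s,t}\,S^*=I_n$, then $n\le s$). Since $B(\mathbf{x},\mathbf{x})=1$ for every $\mathbf{x}\in X$ and $1\notin A(X)$, the matrix $A_1$ is exactly $I_n$, so the defining relations $\phi(1)=1$ and $\phi(\alpha)=0$ for $\alpha\in A(X)$ collapse the weighted sum of the Gram-type matrices $F_{k,\ell}F_{k,\ell}^*-G_{k,\ell}G_{k,\ell}^*$ to the identity. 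All of the substantive work — the addition formula of Theorem~\ref{thm:add-form} and its reformulation in terms of the $F_{k,\ell},G_{k,\ell},A_\alpha$ — is already in place, so what remains is essentially an inertia count.

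First I would record that the central block-diagonal matrix $D:=\bigoplus_{(k,\ell)\in\mathcal{S}} a_{k,\ell}\, I_{\mu_{k,\ell},\nu_{k,\ell}}$ is real and diagonal, its diagonal consisting, for each $(k,\ell)\in\mathcal{S}$, of $\mu_{k,\ell}$ entries equal to $a_{k,\ell}$ followed by $\nu_{k,\ell}$ entries equal to $-a_{k,\ell}$. Hence the number $s$ of strictly positive diagonal entries of $D$ receives a contribution of $\mu_{k,\ell}$ from each $(k,\ell)$ with $a_{k,\ell}>0$ and a contribution of $\nu_{k,\ell}$ from each $(k,\ell)$ with $a_{k,\ell}<0$, and nothing when $a_{k,\ell}=0$. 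This is exactly the definition of $\sigma_{k,\ell}$, so $s=\sum_{(k,\ell)\in\mathcal{S}}\sigma_{k,\ell}$, precisely the quantity on the right-hand side of the asserted bound.

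Next I would put $D$ into the normalized form $I_{s,t}=\mathrm{diag}(1^s,(-1)^t,0^{m-s-t})$ demanded by the lemma, where $m=\sum_{(k,\ell)\in\mathcal{S}}(\mu_{k,\ell}+\nu_{k,\ell})$ is the number of columns of $H$. Writing $D=P\,J\,P$ with $P:=\mathrm{diag}(\sqrt{|d_i|})$ (the $d_i$ being the diagonal entries of $D$, and $J$ recording their signs) and absorbing $P$ into $H$ by setting $S:=HP$, one obtains $S\,J\,S^*=I_n$. A simultaneous permutation of the columns of $S$ and the diagonal entries of $J$ reorders $J$ into $I_{s,t}$ without changing the product, giving $S\,I_{s,t}\,S^*=I_n$; the cited lemma then yields $n\le s=\sum_{(k,\ell)\in\mathcal{S}}\sigma_{k,\ell}$, as required.

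Because the analytic heart of the matter is already discharged, there is no serious obstacle here; the only point needing care is the signature bookkeeping, namely verifying that the $-a_{k,\ell}$ blocks arising from the $G_{k,\ell}G_{k,\ell}^*$ terms contribute $\nu_{k,\ell}$ (rather than $\mu_{k,\ell}$) positive entries when $a_{k,\ell}<0$. I would also note, for safety, that any positive normalizing constant $\gamma^d_{k,\ell}$ carried along from Theorem~\ref{thm:add-form} is strictly positive and can be folded into $P$, so it affects none of the signs and hence none of the counts above.
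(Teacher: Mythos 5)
Your proposal is correct and follows essentially the same route as the paper: the identity $H\bigl(\bigoplus_{(k,\ell)\in\mathcal{S}} a_{k,\ell}\, I_{\mu_{k,\ell},\nu_{k,\ell}}\bigr)H^* = I_n$ combined with Blokhuis's lemma, with the count of positive diagonal entries giving $\sum_{(k,\ell)\in\mathcal{S}}\sigma_{k,\ell}$. The paper leaves the normalization of the central matrix to the form $I_{s,t}$ and the positivity of $\gamma^d_{k,\ell}$ implicit, whereas you spell them out; this is only added care, not a difference in method.
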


We now apply Theorem~\ref{thm:add-upperbound} to Hermitian adjacency matrices $H$ 
in order to obtain upper bounds on the order of a digraph in terms of the inertia of $H-\lambda I$ for an eigenvalue $\lambda$ (not the rank of $H-\lambda I$, which is the same as the codimension of $\lambda$, as in Theorem \ref{theo-main}). 
Let $\Delta$ be a digraph on $n$ vertices, 
and $H:=H_{\omega}(\Delta)$, for some $\omega\in\mathbb{C}\setminus \mathbb{R}$, $|\omega|=1$, be a Hermitian adjacency matrix 
of $\Delta$. Let $\lambda_1>\lambda_2>\cdots>\lambda_s$ be the distinct eigenvalues of $H$ of respective multiplicities $m_1$, $m_2$, $\ldots$, $m_s$.

Fix $\lambda=\lambda_i$ ($2\leq i\leq s-1$), and assume that $\lambda\not\in\{0,-1\}$.
Then the matrix $I-\frac{1}{\lambda}H$ has inertia $(p,q)$, where $p=\sum_{j=i+1}^{s}m_j,q=\sum_{j=1}^{i-1}m_j$ if $\lambda>0$, or $p=\sum_{j=1}^{i-1}m_j,q=\sum_{j=i+1}^{s}m_j$ if $\lambda<0$.
Regard the matrix $I-\frac{1}{\lambda}H$ as the Gram matrix of a finite set $X$ of points in $\mathbb{C}^{p,q}$ with $A(X)=\{0,-\frac{1}{\lambda},-\frac{\omega}{\lambda},-\frac{\overline{\omega}}{\lambda}\}$ and $n=|X|$.

Put $\mathcal{S}:=\{(0,0),(1,0),(2,0),(0,1),(1,1)\}$.
Define a polynomial $F(z)$ in $z$ and $\bar{z}$ by
$F(z)=\sum_{(k,\ell)\in\mathcal{S}}a_{k,\ell}g_{k,\ell}(z)$, 
where
\begin{align*}
a_{0,0}&=\frac{(x-x^2+y^2)\lambda}{2dy^2(1+\lambda)}, \\
a_{1,0}&=\frac{1}{2d(1+\lambda)}, \\
a_{0,1}&=\frac{1}{2d(1+\lambda)}, \\
a_{1,1}&=\frac{(x-x^2+y^2)\lambda}{2d(d+1)y^2(1+\lambda)}, \\
a_{2,0}&=\frac{(1-x)\lambda}{2d(d+1)y^2(1+\lambda)},
\end{align*}
where $d=p+q$ and $\omega=x+{\bf i}y$ ($x,y\in\mathbb{R},x^2+y^2=1$).

Then $F(\alpha)=0$ for any $\alpha\in A(X)$ and $F(1)=1$,
and $F(z)\in\text{Span}\{1,z,\bar{z},z^2,z\bar{z}\}$.
Therefore  $X$ is an $\mathcal{S}$-code in $\mathbb{C}^{p,q}$ with annihilator polynomial $F$. 
Note that 
\begin{align*}
\sigma_{0,0}+\sigma_{1,1}&=\begin{cases}
\mu_{0,0}+\mu_{1,1} & \text{ if } (x>-\frac{1}{2}, \frac{\lambda}{1+\lambda}>0) \text{ or } (x<-\frac{1}{2}, \frac{\lambda}{1+\lambda}<0),\\
\nu_{0,0}+\nu_{1,1} & \text{ if } (x>-\frac{1}{2}, \frac{\lambda}{1+\lambda}<0) \text{ or } (x<-\frac{1}{2}, \frac{\lambda}{1+\lambda}>0),\\
0 & \text{ if } x=-\frac{1}{2},
\end{cases}\\
\sigma_{1,0}+\sigma_{0,1}&=\begin{cases}
\mu_{1,0}+\mu_{0,1} & \text{ if } \lambda>-1,\\
\nu_{1,0}+\nu_{0,1} & \text{ if } \lambda<-1,
\end{cases}\\
\sigma_{2,0}&=\begin{cases}\mu_{2,0} & \text{ if }\frac{\lambda}{1+\lambda}>0, \\
\nu_{2,0} & \text{ if }\frac{\lambda}{1+\lambda}<0.
\end{cases}
\end{align*}

If $\textrm{Re}(\omega)>-\frac{1}{2}$ and $0<\lambda_i$ hold, then by Theorem~\ref{thm:add-upperbound}, we obtain  
\begin{align*}
n\leq \sum_{(k,\ell)\in\mathcal{S}}\sigma_{k,\ell}=\mu_{0,0}+\mu_{1,1}+\mu_{1,0}+\mu_{0,1}+\mu_{2,0}=\frac{1}{2}(3p_i^2+3q_i^2+5p_i+q_i).
\end{align*}
The other cases follow in a similar manner; thus, we have the following result, which implies Theorem \ref{theo-main-2}. 
\begin{theorem}\label{thm:main2}
Let $H=H_\omega(\Delta)$ be a Hermitian adjacency matrix of a digraph $\Delta$ of order $n$ with $s$
distinct eigenvalues $\lambda_1>\lambda_2>\cdots >\lambda_s$ of respective multiplicities $m_1,m_2,\ldots,m_s$.
Set $p_i=\sum_{j=i+1}^{s}m_j,q_i=\sum_{j=1}^{i-1}m_j$ if $\lambda_i>0$, or $p_i=\sum_{j=1}^{i-1}m_j,q_i=\sum_{j=i+1}^{s}m_j$ if $\lambda_i<0$.
Then, for $i\in\{2,\ldots,s-1\}$ such that $\lambda_i\notin\{0,-1\}$, 
one of the following holds.
\begin{enumerate}
\item If $\textrm{Re}(\omega)<-\frac{1}{2}$ and $\lambda_i<-1$, then
\begin{align*}
n\leq \nu_{0,0}+\nu_{1,0}+\nu_{0,1}+\nu_{1,1}+\mu_{2,0}=\frac{1}{2} \left(p_i^2+4 p_i q_i+q_i^2+p_i +5q_i\right).
\end{align*}
\item If $\textrm{Re}(\omega)<-\frac{1}{2}$ and $-1<\lambda_i<0$, then
\begin{align*}
n\leq \mu_{0,0}+\nu_{1,0}+\nu_{0,1}+\mu_{1,1}+\nu_{2,0}=p_i^2+q_i^2+p_iq_i+2p_i.
\end{align*}
\item If $\textrm{Re}(\omega)<-\frac{1}{2}$ and $0<\lambda_i$, then
\begin{align*}
n\leq \nu_{0,0}+\mu_{1,0}+\mu_{0,1}+\nu_{1,1}+\mu_{2,0}=\frac{1}{2} \left(p_i^2+4 p_i q_i+q_i^2+5 p_i+q_i\right).
\end{align*}
\item If $\textrm{Re}(\omega)=-\frac{1}{2}$ and $\lambda_i<-1$, then
\begin{align*}
n\leq \nu_{1,0}+\nu_{0,1}+\mu_{2,0}=\frac{1}{2}(p_i^2+q_i^2+p_i+5q_i).
\end{align*}
\item If $\textrm{Re}(\omega)=-\frac{1}{2}$ and $0<\lambda_i<-1$, then
\begin{align*}
n\leq \mu_{1,0}+\mu_{0,1}+\nu_{2,0}= p_i q_i+2 p_i.
\end{align*}
\item If $\textrm{Re}(\omega)=-\frac{1}{2}$ and $0<\lambda_i$, then
\begin{align*}
n\leq \mu_{1,0}+\mu_{0,1}+\mu_{2,0}=\frac{1}{2} \left(p_i^2+q_i^2+5p_i +q_i\right).
\end{align*}
\item If $\textrm{Re}(\omega)>-\frac{1}{2}$ and $\lambda_i<-1$, then
\begin{align*}
n\leq \mu_{0,0}+\nu_{1,0}+\nu_{0,1}+\mu_{1,1}+\mu_{2,0}=\frac{1}{2} \left(3p_i^2+3q_i^2+p_i +5q_i\right).
\end{align*}
\item If $\textrm{Re}(\omega)>-\frac{1}{2}$ and $-1<\lambda_i<0$, then
\begin{align*}
n\leq \nu_{0,0}+\mu_{1,0}+\mu_{0,1}+\nu_{1,1}+\nu_{2,0}=3p_i q_i+2p_i.
\end{align*}
\item If $\textrm{Re}(\omega)>-\frac{1}{2}$ and $0<\lambda_i$, then
\begin{align*}
n\leq \mu_{0,0}+\mu_{1,0}+\mu_{0,1}+\mu_{1,1}+\mu_{2,0}=\frac{1}{2} \left(3p_i^2+3q_i^2+5p_i +q_i\right).
\end{align*}
\end{enumerate}
\end{theorem}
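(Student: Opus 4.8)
The plan is to realize the matrix $I-\frac{1}{\lambda}H$ (for $\lambda=\lambda_i$, $2\le i\le s-1$, $\lambda\notin\{0,-1\}$) as the Gram matrix, with respect to the bilinear form $B$, of a finite set $X$ of points on $\{\mathbf{x}:B(\mathbf{x},\mathbf{x})=1\}$ in $\mathbb{C}^{p,q}$, where $(p,q)$ is the inertia of $I-\frac{1}{\lambda}H$ recorded in the statement. Since the off-diagonal entries of this Gram matrix are exactly $0,-\frac{1}{\lambda},-\frac{\omega}{\lambda},-\frac{\overline{\omega}}{\lambda}$, the set $A(X)$ of inner products lies in this four-element set, and the degree-at-most-two polynomial $F$ exhibited above vanishes on $A(X)$ with $F(1)=1$; hence $X$ is an $\mathcal{S}$-code for the lower set $\mathcal{S}=\{(0,0),(1,0),(2,0),(0,1),(1,1)\}$ with annihilator $F$. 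Theorem~\ref{thm:add-upperbound} then yields $n=|X|\le\sum_{(k,\ell)\in\mathcal{S}}\sigma_{k,\ell}$, so the whole statement reduces to two bookkeeping tasks: (a) determining the sign of each coefficient $a_{k,\ell}$, and (b) evaluating the corresponding dimensions $\mu_{k,\ell},\nu_{k,\ell}$.

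For step (a), I would use $x^2+y^2=1$, $y\neq0$, and $x=\textrm{Re}(\omega)<1$ to reduce every sign to three elementary ones. The coefficients $a_{1,0}=a_{0,1}=\frac{1}{2d(1+\lambda)}$ have the sign of $1+\lambda$; the coefficient $a_{2,0}$, written as $\frac{1-x}{2d(d+1)y^2}\cdot\frac{\lambda}{1+\lambda}$ with $\frac{1-x}{y^2}>0$, has the sign of $\frac{\lambda}{1+\lambda}$; and $a_{0,0},a_{1,1}$ share the factor $(x-x^2+y^2)\frac{\lambda}{1+\lambda}$, where the factorization $x-x^2+y^2=(2x+1)(1-x)$ shows their sign is that of $(2x+1)\cdot\frac{\lambda}{1+\lambda}$, so that they vanish precisely when $\textrm{Re}(\omega)=-\frac{1}{2}$. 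Splitting according to the sign of $2x+1$ (the three column headings $\textrm{Re}(\omega)<-\tfrac12$, $=-\tfrac12$, $>-\tfrac12$) and the three sign regions of $\lambda$ (namely $\lambda<-1$, $-1<\lambda<0$, $\lambda>0$), which jointly fix the signs of $1+\lambda$ and of $\frac{\lambda}{1+\lambda}$, produces exactly the nine cases, with each $\sigma_{k,\ell}$ equal to $\mu_{k,\ell}$, $\nu_{k,\ell}$, or $0$ as prescribed.

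For step (b), I would compute the five relevant dimensions from Theorem~\ref{thm:add-dim}(v),(vi): using $\mu_{k,\ell}=\dim\Hom^+(d,k,\ell)-\dim\Hom^+(d,k-1,\ell-1)$ and the analogous formula for $\nu_{k,\ell}$, and sorting the monomials $z^a\overline{z}^b$ by the parity of the number of indices exceeding $p$, one gets $\mu_{0,0}=1$, $\nu_{0,0}=0$; $\mu_{1,0}=\mu_{0,1}=p$, $\nu_{1,0}=\nu_{0,1}=q$; $\mu_{1,1}=p^2+q^2-1$, $\nu_{1,1}=2pq$; $\mu_{2,0}=\tfrac12(p^2+q^2+p+q)$, $\nu_{2,0}=pq$. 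Substituting these into the case-by-case sums and simplifying yields the nine closed-form bounds; since $2n\le 2\sum_{(k,\ell)\in\mathcal{S}}\sigma_{k,\ell}$, these are precisely the entries of Table~\ref{tab:Cpq}, so Theorem~\ref{theo-main-2} follows at once.

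The genuinely delicate part is the sign bookkeeping in step (a), particularly at the boundary $\textrm{Re}(\omega)=-\frac12$, where $a_{0,0}=a_{1,1}=0$ and hence the terms $\sigma_{0,0},\sigma_{1,1}$ drop out entirely; this is exactly why the middle row of the table carries fewer summands than the other two. Beyond that, the only routine checks needed are that $F$ indeed lies in $\mathrm{Span}\{1,z,\overline{z},z^2,z\overline{z}\}$ and satisfies $F(1)=1$, and that the inertia of $I-\frac{1}{\lambda}H$ is the claimed $(p,q)$; the remaining algebraic simplifications are mechanical.
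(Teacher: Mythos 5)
Your proposal is correct and follows essentially the same route as the paper: realize $I-\frac{1}{\lambda_i}H$ as the Gram matrix of an $\mathcal{S}$-code in $\mathbb{C}^{p,q}$ with $\mathcal{S}=\{(0,0),(1,0),(0,1),(2,0),(1,1)\}$ and the stated annihilator, apply Theorem~\ref{thm:add-upperbound}, and carry out the sign and dimension bookkeeping. Your sign analysis (via the factorization $x-x^2+y^2=(2x+1)(1-x)$) and your dimension values $\mu_{0,0}=1$, $\nu_{0,0}=0$, $\mu_{1,0}=\mu_{0,1}=p$, $\nu_{1,0}=\nu_{0,1}=q$, $\mu_{1,1}=p^2+q^2-1$, $\nu_{1,1}=2pq$, $\mu_{2,0}=\tfrac12(p^2+q^2+p+q)$, $\nu_{2,0}=pq$ match what the paper leaves implicit and reproduce all nine closed-form bounds.
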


\section{Concluding remarks}\label{sect:final}

By Theorem \ref{theo-BR-2}, a graph attaining equality
in Eq.\ \eqref{eq-DGS} is extremal strongly regular (see also \cite{Rsign}
for its signed analogue). It would be interesting to see whether
graphs (if any exist) attaining equality in the bounds in Theorem \ref{theo-main}
possess any remarkable combinatorial properties.
However, we expect this to be a difficult question  because
there are several different bounds (unlike the case of $(0,1)$-adjacency matrices).
In particular, it is not clear why the case $\omega=\frac{-1\pm{\bf i}\sqrt{3}}{2}$
is somewhat special so that the bound in Theorem \ref{th-case2} is better than that
in Theorem \ref{th-case1} (although we could not improve the former one by $-1$
as in Theorem \ref{th-case1-improv}).

\begin{problem}
What kind of structures (e.g., association schemes) are digraphs achieving the bounds in Theorems $\ref{theo-main}$ or  $\ref{theo-main-2}$ related to?
\end{problem}

Note that \cite[Theorem~2.3]{BR} extends the bound from Theorem \ref{theo-BR-2} to
all eigenvalues distinct from $0$ and $-1$.
Its proof relies on the existence of the Perron-Frobenius eigenvector
with all positive coordinates. Since Hermitian matrices do not have such an eigenvector in general,
it is not clear how to generalize the star complement technique in Section \ref{ssect:starbound}
to main eigenvalues.
Accordingly, in this situation Theorem \ref{theo-main} does not give a bound in $d_i$
when $i\notin \{1,s\}$.
However, one can use the following variant of \cite[Lemma~10]{Bukh},
which may give a better bound (than that in Eq.\ \eqref{eq-Bukh}) when $L$ contains conjugate elements.
To apply Lemma \ref{lemma-Bukh}, one should follow the proof of \cite[Proposition~1]{Bukh}.

\begin{lemma}\label{lemma-Bukh}
Suppose that $M=(m_{ij})$ is a matrix over $\mathbb{C}$.
Let $f=f(z,\bar{z})=\sum_{(k,\ell)\in \mathcal{S}}a_{k,\ell}z^k \bar{z}^{\ell}$,
where $\mathcal{S}$ is a set of pairs of nonnegative integers,
be a two-variate polynomial in $z$ and $\bar{z}$.
Define $f[M]$ to be the matrix obtained from $M$ by applying $f$
to each of its entries, i.e., $(f[M])_{ij}=f(m_{ij},\overline{m_{ij}})$.
Then
\begin{equation*}
\rank(f[M])\leq \sum_{(k,\ell)\in\mathcal{S}}\binom{\rank(M)+k-1}{k}\binom{\rank(M)+\ell-1}{\ell}.
\end{equation*}
\end{lemma}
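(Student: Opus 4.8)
The plan is to reduce the claim to the elementary behaviour of entrywise (Hadamard) powers of a low-rank matrix, together with subadditivity of rank. Write $r=\rank(M)$ and factor $M=UV$, where $U\in\mathbb{C}^{n\times r}$ has rows $u_1^\top,\ldots,u_n^\top$ and $V\in\mathbb{C}^{r\times n}$ has columns $v_1,\ldots,v_n$, so that $m_{ij}=\sum_{t=1}^{r}u_{it}v_{tj}$. Since
\[
f[M]=\sum_{(k,\ell)\in\mathcal{S}}a_{k,\ell}\,P_{k,\ell},\qquad (P_{k,\ell})_{ij}=m_{ij}^{k}\,\overline{m_{ij}}^{\,\ell},
\]
and the rank is subadditive, it suffices to prove $\rank(P_{k,\ell})\leq\binom{r+k-1}{k}\binom{r+\ell-1}{\ell}$ for each fixed $(k,\ell)\in\mathcal{S}$.

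The key step is to expand the entries of $P_{k,\ell}$ and separate their dependence on the row index $i$ from that on the column index $j$. Using $\overline{m_{ij}}=\sum_{t=1}^{r}\overline{u_{it}}\,\overline{v_{tj}}$ and the multinomial theorem, one gets
\[
m_{ij}^{k}\,\overline{m_{ij}}^{\,\ell}=\sum_{\substack{\alpha\in\mathbb{N}^{r}\\ |\alpha|=k}}\ \sum_{\substack{\beta\in\mathbb{N}^{r}\\ |\beta|=\ell}}\binom{k}{\alpha}\binom{\ell}{\beta}\Big(\prod_{t=1}^{r}u_{it}^{\alpha_t}\overline{u_{it}}^{\,\beta_t}\Big)\Big(\prod_{t=1}^{r}v_{tj}^{\alpha_t}\overline{v_{tj}}^{\,\beta_t}\Big).
\]
For each pair $(\alpha,\beta)$ the two bracketed factors depend only on $i$ and only on $j$, respectively, so the corresponding summand contributes a rank-one matrix. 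Hence $P_{k,\ell}$ is a sum of rank-one matrices indexed by the pairs $(\alpha,\beta)$ with $|\alpha|=k$ and $|\beta|=\ell$. The number of such $\alpha$ (respectively $\beta$) is the number of degree-$k$ (respectively degree-$\ell$) monomials in $r$ variables, namely $\binom{r+k-1}{k}$ (respectively $\binom{r+\ell-1}{\ell}$), which yields the desired bound on $\rank(P_{k,\ell})$.

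Summing over $(k,\ell)\in\mathcal{S}$ then gives $\rank(f[M])\leq\sum_{(k,\ell)\in\mathcal{S}}\binom{r+k-1}{k}\binom{r+\ell-1}{\ell}$, as required. I do not expect a genuine obstacle here: the argument is a routine instance of the symmetric-power bound for Hadamard powers, and the only points demanding care are the bookkeeping that cleanly separates the $i$- and $j$-dependence in each multinomial term, and the observation that conjugation preserves rank, so that the antiholomorphic factors are governed by the same $r=\rank(M)$.
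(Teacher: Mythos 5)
Your proof is correct and follows essentially the same route as the paper: both arguments reduce to the observation that each entry of the Hadamard power $m_{ij}^{k}\overline{m_{ij}}^{\,\ell}$ is a polynomial in the coordinates of a rank-$r$ factorization of $M$, and then count the monomials of bidegree $(k,\ell)$ in $r$ variables. The paper phrases this by showing every column of $f[M]$ lies in the span of coordinatewise products of $r$ basis columns, whereas you make the same count explicit via the multinomial expansion into rank-one summands; the two are interchangeable.
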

\begin{proof}
Let $r=\rank(M)$ and $v_1,\ldots,v_n$ be the columns of $M$, and assume that
$v_1,\ldots,v_r$ span the column space of $M$.
For nonnegative integers $a,b$, define
\begin{align*}
V_{a,b}=\text{span}\{v_1^{e_1}\cdots v_r^{e_r}\overline{v_1}^{f_1}\cdots \overline{v_r}^{f_r} \mid e_i,f_j\in\mathbb{N}, 
\sum_{i=1}^r e_i=a, \sum_{i=1}^r f_i=b\},
\end{align*}
where $v^k$ stands for the coordinate-wise power of $v$ and $v_jv_j$ means 
the coordinate-wise product of $v_i$ and $v_j$. 

Observe that $\dim V_{a,b}=\binom{r+a-1}{a}\binom{r+b-1}{b}$.
Let $W$ be the span of all $V_{k,\ell}$ with $(k,\ell)\in\mathcal{S}$.
Since $v_i=\sum_{j=1}^r \alpha_{ij}v_j$ for some scalars $\alpha_{ij}$, it follows that
the coordinate-wise product of $v_i^a$ and $\overline{v_i}^b$, which is given by
\begin{equation*}
v_i^a \overline{v_i}^b=\sum_{j_1,\ldots,j_a,j'_1,\ldots,j'_b}\prod_{x=1}^{a}\prod_{y=1}^b \alpha_{ij_x}\overline{\alpha_{ij'_y}}v_{j_x}\overline{v_{j'_y}},
\end{equation*}
belongs to $V_{a,b}$. Hence each column of $f[M]$ lies in $W$.
Thus, 
\begin{align*}
\rank(f[M])\leq \dim W=\sum_{(k,\ell)\in\mathcal{S}}\dim V_{k,\ell}=\sum_{(k,\ell)\in\mathcal{S}}\binom{r+k-1}{k}\binom{r+\ell-1}{\ell},
\end{align*}
and the lemma follows.
\end{proof}

Now let us discuss a generalization of the Neumaier's bound. More precisely, \cite{Neu} says that the order $n$ of a strongly regular graph with eigenvalues $k,\theta_1>0,\theta_2<0$ of respective multiplicities $1,m_1,m_2$ satisfies:
\begin{equation}\label{eq-abs-bound-Neu2}
    n\leq \frac{1}{2}m_i(m_i+1)
\end{equation}
unless equality in the corresponding Krein condition holds:
\begin{equation}\label{eq-Krein}
(\theta_i + 1)(k + \theta_i + 2\theta_i\theta_j)=(k + \theta_i)(\theta_j + 1)^2, \quad \{i,j\}=\{1,2\}.
\end{equation}

We have the following result.

\begin{theorem}\label{theo-Neu}
Let $A$ denote a $(0,1)$-adjacency matrix of a connected $k$-regular (simple) graph on $n$ vertices.
Let $\lambda$ be an eigenvalue of $A$ 
of multiplicity $n-d$, $d\geq 2$, 
such that $\lambda\notin\{0,-1,k\}$ and 
$A$ has no eigenvalue $\frac{\lambda(2\lambda-2k- \lambda n)}{2\lambda - 2k + n}$. 
Then
\begin{equation}\label{eq-genNeu}
 n\leq \frac{1}{2}d(d-1).
\end{equation}
\end{theorem}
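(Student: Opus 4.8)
The plan is to imitate Neumaier's improvement of the absolute bound: use the regularity to shave one off the rank, and then bound $n$ by the rank of a Hadamard (Schur) square rather than by the dimension of a space of degree-$2$ polynomials, as in Theorem~\ref{theo-BR-2}. Since the graph is connected and $k$-regular, the all-one vector $\one$ spans the (simple, by Perron--Frobenius) $k$-eigenspace, and every eigenvalue $\mu\neq k$ of $A$ is non-main. First I would put $c=(\lambda-k)/n$ and $M=A+cJ$, where $J$ is the all-one matrix; since $AJ=JA=kJ$ and $J$ acts as $n$ on $\one$ and as $0$ on $\one^{\perp}$, the matrix $M$ has the same eigenvalues as $A$ except that the $k$-eigenvalue of $\one$ is shifted to $k+cn=\lambda$. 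Hence $\lambda$ becomes an eigenvalue of $M$ of multiplicity $(n-d)+1$, so $N:=M-\lambda I=A+cJ-\lambda I$ is a real symmetric matrix of rank $d-1$. This is the regular-graph analogue of Lemma~\ref{lem:non-regular}, but now valid for an arbitrary, not necessarily extreme, eigenvalue, precisely because the Schur-square argument will not require semidefiniteness.

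The second step is the Hadamard-square bound. For any real symmetric matrix of rank $r$ one has $\rank(N\circ N)\le\binom{r+1}{2}$: from a spectral decomposition $N=\sum_{i}\varepsilon_i v_iv_i^{\top}$ one writes $N\circ N=\sum_{i\le j}(\dots)(v_i\circ v_j)(v_i\circ v_j)^{\top}$, a sum of $\binom{r+1}{2}$ rank-one terms. With $r=d-1$ this gives $\rank(N\circ N)\le\binom{d}{2}=\tfrac12 d(d-1)$. Therefore it suffices to prove that $N\circ N$ is nonsingular, for then $\rank(N\circ N)=n$ and the bound \eqref{eq-genNeu} follows.

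For the nonsingularity I would compute $N\circ N$ entrywise. The diagonal entries of $N$ equal $c-\lambda$ and the off-diagonal entries take only the two values $1+c$ (on edges) and $c$ (on non-edges), so
\begin{equation*}
N\circ N=(\lambda^2-2c\lambda)I+(1+2c)A+c^2J,
\end{equation*}
again a polynomial in the commuting matrices $A$ and $J$. Diagonalising simultaneously, the eigenvalue of $N\circ N$ on $\mathcal{E}(\lambda)$ is $\lambda^2+\lambda=\lambda(\lambda+1)$, which is nonzero precisely because $\lambda\notin\{0,-1\}$; on each remaining eigenspace $\mathcal{E}(\mu)$ with $\mu\neq k,\lambda$ the eigenvalue is $(\lambda^2-2c\lambda)+(1+2c)\mu$, which vanishes exactly when $\mu$ equals $\tfrac{\lambda(2\lambda-2k-\lambda n)}{2\lambda-2k+n}$ after substituting $c=(\lambda-k)/n$, and this value is excluded by hypothesis. (When $2\lambda-2k+n=0$ this expression is undefined and the eigenvalue reduces to the nonzero constant $\lambda(\lambda+1)$, so the degenerate case needs no extra assumption.)

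It remains to treat the direction $\one$, on which $J$ acts as $n$; there the eigenvalue of $N\circ N$ equals $\lambda^2+k-(\lambda-k)^2/n$. The point I expect to be the most delicate is to show that this is automatically nonzero, so that no further hypothesis is required. I would note that $n$ times it equals $(n-1)\lambda^2+2\lambda k+k(n-k)$, a quadratic in $\lambda$ whose discriminant $4k\bigl(k-(n-1)(n-k)\bigr)$ is nonpositive because $1\le k\le n-1$, and is zero only for the complete graph $k=n-1$; but for $K_n$ the only eigenvalues are $k$ and $-1$, both excluded, so no admissible $\lambda$ occurs there. Hence $\lambda^2+k-(\lambda-k)^2/n\neq0$, every eigenvalue of $N\circ N$ is nonzero, $N\circ N$ is nonsingular, and $n=\rank(N\circ N)\le\tfrac12 d(d-1)$. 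The only real work is the bookkeeping that matches the forbidden eigenvalue with the vanishing locus of $(\lambda^2-2c\lambda)+(1+2c)\mu$ and the verification that the $\one$-direction causes no trouble; the remaining steps are direct computations.
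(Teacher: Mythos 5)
Your proof is correct, and every numerical ingredient matches the paper's: the shifted matrix $N=A+\frac{\lambda-k}{n}J-\lambda I$ of rank $d-1$, the entrywise square $N\circ N=(\lambda^2-2c\lambda)I+(1+2c)A+c^2J$, and the two obstructions to its nonsingularity (the quadratic $(n-1)\lambda^2+2\lambda k+k(n-k)=0$ on the $\mathbf{1}$-direction, disposed of by the same discriminant argument, and the forbidden eigenvalue $\frac{\lambda(2\lambda-2k-\lambda n)}{2\lambda-2k+n}$ on $\mathbf{1}^{\perp}$). The difference is in the packaging of the counting step. The paper passes through the star-complement machinery of Section \ref{ssect:star}: it chooses a star complement for $\lambda$ in $G=A+\frac{\lambda-k}{n}J$ of size $d-1$, realizes the vertices as vectors $\mathbf{s}_u\in\mathbb{R}^{d-1}$ with the indefinite inner product $(\lambda I-C)^{-1}$, and bounds $n$ by the dimension $\binom{d}{2}$ of the space of quadratic forms containing the functions $F_u(\mathbf{x})=\langle\mathbf{s}_u,\mathbf{x}\rangle^2$, whose linear independence is exactly the statement that the Gram-type matrix $N\circ N$ is nonsingular. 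You bypass the star complement entirely and instead invoke the standard Schur-square rank inequality $\rank(N\circ N)\le\binom{\rank N+1}{2}$, which with $\rank N=d-1$ gives $n=\rank(N\circ N)\le\binom{d}{2}$ directly. The two routes are dual formulations of the same polynomial-method idea, but yours is more self-contained and avoids importing the construction of Section \ref{ssect:star}; the paper's formulation has the advantage of sitting uniformly alongside the Hermitian star-complement arguments used for Theorem \ref{theo-main}(i), where the analogous Schur-square statement would be less transparent. One small point of care in your write-up: your simultaneous diagonalization handles the case $\mu=\lambda$ separately and correctly (eigenvalue $\lambda(\lambda+1)\ne0$), which the paper leaves implicit in the hypothesis that the forbidden value is not an eigenvalue of $A$; the two are consistent because the forbidden value equals $\lambda$ only when $\lambda\in\{0,-1\}$.
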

\begin{proof}
  As $\Gamma$ is regular and connected, the matrix $A$ has the only main eigenvalue, which is $k$, of multiplicity $1$
  with eigenvector $\mathbf{1}$. Then the matrix $G:=A+\frac{\lambda-k}{n}J$ has eigenvalue $\lambda$ of multiplicity $n-d+1$.
  Consider a star complement for $\lambda$ in $G$, which has size $d-1$, and, following the notation of Section \ref{ssect:star}
  and the proof of Theorem \ref{theo-BR-2} \cite{BR}, define the functions $F_1,\ldots,F_n$ by:
  \begin{equation*}
    F_u({\bf x})=\langle {\bf s}_u,{\bf x}\rangle^2\quad(u\in V,~{\bf x}\in \mathbb{R}^{d-1}),
  \end{equation*}
  where by Eqs. \eqref{eq-inprod} and  \eqref{eq-H}, for all vertices $u,v$ of $\Gamma$, we have:
  \begin{equation*}
    \langle {\bf s}_u,{\bf s}_v\rangle = \left\{\begin{matrix}
                                               \lambda-\frac{\lambda-k}{n} & \text{if~}u=v, \\
                                               -1-\frac{\lambda-k}{n} & \text{if~}u\sim v, \\
                                               -\frac{\lambda-k}{n}  & \text{~otherwise.}
                                             \end{matrix}\right.
  \end{equation*}

  Suppose that $F_1,F_2,\ldots,F_n$ are linearly dependent. Then, as in the proof of Theorem \ref{th-case1},
  we obtain that
  \begin{equation*}
    \big(c_2J+(c_0-c_2)I+(c_1-c_2)A\big)\mathbf{b}=\mathbf{0}
  \end{equation*}
  holds for some nonzero vector $\mathbf{b}$, where
  $c_0=\big(\lambda-\frac{\lambda-k}{n}\big)^2$, $c_1=\big(1+\frac{\lambda-k}{n}\big)^2$, and
  $c_2=\big(\frac{\lambda-k}{n}\big)^2$.
  It follows that $\mathbf{b}$ is an eigenvector of $c_2J+(c_1-c_2)A$ with eigenvalue $c_2-c_0$. Since $\Gamma$ is regular, the matrices $J$ and $A$ commute and hence share the eigenspaces. 
  Thus, $\mathbf{b}$ is an eigenvector of $A$, and either $\mathbf{b}$ is a scalar multiple of $\mathbf{1}$ or $\mathbf{b}\perp \mathbf{1}$. 

  In the former case, we obtain that $c_2-c_0=c_2n+(c_1-c_2)k$, which simplifies to 
  $\lambda^2(n-1) + 2\lambda k + k(n-k)=0$. 
  Then $\lambda=\frac{-k\pm\sqrt{k^2-(n-1)k(n-k)}}{n-1}$; now $k^2-(n-1)k(n-k)\geq 0$ implies $k\geq (n-1)(n-k)$, which is impossible unless $n=k+1$. If $n=k+1$, then $\Gamma$ is a complete graph, whose only eigenvalues are $k$ of multiplicity $1$ and $-1$ of multiplicity $k$, a contradiction.

  In the latter case, $\frac{c_2-c_0}{c_1-c_2}=\frac{\lambda(2\lambda-2k- \lambda n)}{2\lambda - 2k + n}$ is an eigenvalue of $A$,
  contrary to the assumption. This shows that $F_1,F_2,\ldots,F_n$ are linearly independent,
  and the result follows by the same argument as in the proof of Theorem \ref{th-case1}.
\end{proof}

Suppose now that $\Gamma$ is a strongly regular graph with the spectrum as above.
Then, by Theorem \ref{theo-Neu}, unless 
\begin{equation}\label{eq-gKrein}
\theta_i=\frac{\theta_j(2\theta_j-2k- \theta_j n)}{2\theta_j - 2k + n}
\end{equation} 
holds, we have that  
$n\leq \frac{1}{2}m_i(m_i+1)$, since 
$d=n-m_j=m_i+1$ if $\lambda=\theta_j$; i.e., Eq.\ \eqref{eq-abs-bound-Neu2} holds. 
Further, by using the standard relations between the parameters and eigenvalues of strongly regular graphs (see, e.g., \cite{CGS}), 
one can verify that Eq.\ \eqref{eq-Krein} 
implies Eq.\ \eqref{eq-gKrein} and vice versa. 

\begin{problem}
Are there non-strongly regular graphs achieving equality in Eq.\  \eqref{eq-genNeu}?
\end{problem}

\begin{problem}
What are the parameters of strongly regular graphs attaining equality in Eq.  \eqref{eq-abs-bound-Neu2}?
(All parameters of strongly regular graphs that attain equality in the bound from Theorem $\ref{theo-BR-2}$ are expressed in the eigenvalue of largest multiplicity.)
\end{problem}

\begin{problem}
Eq. \eqref{eq-abs-bound-Neu2} is a special case of the inequalities for multiplicities of association schemes shown in \cite{Neu}. Can the idea of the proof of Theorem $\ref{theo-Neu}$ be used to prove these general inequalities (and to investigate 
the case of equality; cf. Problem in \cite[Section~2.3]{BCN})?
\end{problem}

\bibliographystyle{abbrv}
\bibliography{references}

\Acknowledgements

\bibliographystyle{abbrv}
\bibliography{references}

\end{document}